\documentclass[10pt]{article}
\usepackage[utf8]{inputenc}
\usepackage[english]{babel}
\usepackage{amssymb,amsmath,amsthm}
\usepackage{indentfirst}
\usepackage{enumitem}
\usepackage{tocloft}
\usepackage{MnSymbol}
\usepackage{mathtools}

\theoremstyle{definition}

\newtheorem{theorem}{Theorem}[section]

\newtheorem{proposition}[theorem]{Proposition}
\newtheorem{lemma}[theorem]{Lemma}
\newtheorem{corollary}[theorem]{Corollary}

\newtheorem{observation}[theorem]{Observation}

\newtheorem{notation}[theorem]{Notation}
\newtheorem{definition}[theorem]{Definition}

\title{Polynomial identities with involution for the algebra of $3 \times 3$ upper triangular matrices}
\author{Dimas Jos\'e Gon\c{c}alves
\\
Universidade Federal de S\~ao Carlos \\
Departamento de Matem\'{a}tica \\
13565-905 S\~ao Carlos, SP, Brasil\\
e-mail: \texttt{dimasjg@ufscar.br} \\
\\
Dalton Couto Silva \\
Instituto Federal de Educação, Ciência e Tecnologia de São Paulo \\
11665-071 Caraguatatuba, SP, Brasil\\
e-mail: \texttt{dalton.couto@ifsp.edu.br}
}

\begin{document}

\maketitle

\noindent\textbf{Keywords:} Involution, Upper triangular matrices, Identities with involution, 
Central polynomials with involution, PI-algebra.

\noindent\textbf{2010 AMS MSC Classification:} 16R10, 16R50, 16W10.

\

\begin{abstract}
Let $\mathbb{F}$ be a field of characteristic $p$, and let
$UT_n(\mathbb{F})$ be the algebra   
of $n \times n$ upper triangular matrices over $\mathbb{F}$ with an involution of the first kind. 
In this paper we describe:
the set of all 
$*$-central polynomials for $UT_n(\mathbb{F})$ when $n\geq 3$ and $p\neq 2$ ; the set of all 
$*$-polynomial identities for $UT_3(\mathbb{F})$ when $\mathbb{F}$ is infinite and $p>2$.
\end{abstract}

\section{Introduction}
Let $\mathbb{F}$ be a field of characteristic $p\neq 2$. In this paper, every algebra is unitary associative over $\mathbb{F}$ and every involution is of the first kind. 

We will talk a little about the involutions of the matrix algebra $M_n(\mathbb{F})$ and its subalgebra 
$UT_n(\mathbb{F})$. There are two important involutions on $M_n(\mathbb{F})$: the transpose and symplectic. When $\mathbb{F}$ is algebraically closed, these are the only involutions  up to isomorphism.
With respect to algebra
$UT_n(\mathbb{F})$, there exist two classes of inequivalent involutions when $n$ is even and a single class otherwise 
(see \cite[Proposition 2.5]{vinkossca}) for all $\mathbb{F}$ (finite or infinite). 

Given two disjoint infinite sets $Y=\{y_1,y_2, \ldots \}$
and $Z=\{z_1,z_2, \ldots \}$, denote by $\mathbb{F} \langle Y \cup Z \rangle$ the free unitary associative algebra, freely
generated  by $Y\cup Z$, with the involution $*$ where 
\[y_i^*=y_i \ \ \mbox{and} \ \ z_i^*=-z_i,\]
for all $i\geq 1$. Given an algebra with involution $(A,\circledast)$, denote by $Id(A,\circledast)$ the set of its $*$-polynomial
identities, that is, the set of all
$f(y_1,\ldots ,y_m,z_1,\ldots ,z_n) \in 
\mathbb{F} \langle Y \cup Z \rangle$ such that 
\[f(a_1,\ldots ,a_m,b_1,\ldots ,b_n)=0 \]
for all $a_1,\ldots ,a_m \in A^+$ and $b_1,\ldots ,b_n \in A^-$. Here, $A^+$   ( $A^-$ ) is the set of all symmetric 
(skew-symmetric) elements  of $A$. 

When we study $Id(M_n(\mathbb{F}), \circledast)$ and $F$ is infinite, it is sufficient to consider the transpose and symplectic involutions  (see \cite[Theorem 3.6.8]{gzbook}). The case $n=2$ was described as follows: 
  Levchenko \cite{levchenkocarzero,levchenkofinito} for $p=0$ or $\mathbb{F}$ finite; 
  Colombo and Koshlukov \cite{colomboplamen} for $\mathbb{F}$ infinite with $p>2$. 
  
  With respect to 
$Id(UT_n(\mathbb{F}), \circledast)$, the case $n=2$ was described as follows: 
  Di Vincenzo, Koshlukov and La Scala \cite{vinkossca} when $\mathbb{F}$ is infinite; Urure
  and Gon\c{c}alves \cite{ronalddimas} when $\mathbb{F}$ is finite. The case $n=3$  also was described in
\cite{vinkossca} when $p=0$. 

The main result of this paper is the description of $Id(UT_3(\mathbb{F}), \circledast)$
for all involutions of the first kind $\circledast$ when $\mathbb{F}$ is infinite and $p>2$  (see Theorem \ref{teoremaprincipal1}). 

Recently, Aljadeff, Giambruno, Karasik (\cite{Aljgiakar}) and Sviridova (\cite{sviridova}) proved that if 
$A$ is an algebra with involution $\circledast$ and $p=0$, then $Id(A, \circledast)$ is finitely generated as a T$(*)$-ideal.
We find a finite generating set 
of $Id(UT_3(\mathbb{F}), \circledast)$ as a T($*$)-ideal when $\mathbb{F}$ is infinite and $p>2$.
It is the same of the case $p=0$ (see Theorem \ref{teoremaprincipal1} and \cite[Theorem 6.6]{vinkossca}). 

Given an algebra with involution $(A,\circledast)$, denote by $C(A,\circledast)$ 
the set of its $*$-central polynomials, that is,
the set of all
$f(y_1,\ldots ,y_m,z_1,\ldots ,z_n) \in 
\mathbb{F} \langle Y \cup Z \rangle$ such that 
\[f(a_1,\ldots ,a_m,b_1,\ldots ,b_n) \in Z(A) \]
for all $a_1,\ldots ,a_m \in A^+$ and $b_1,\ldots ,b_n \in A^-$. Here, $Z(A)$ is the center of $A$. 

If $\mathbb{F}$ is infinite, then Brand\~{a}o and Koshlukov \cite{brandaoplamen} described  $C(M_2(\mathbb{F}), \circledast)$.
For every $\mathbb{F}$ (finite and infinite), Urure
  and Gon\c{c}alves \cite{ronalddimascentral2} described  $C(UT_2(\mathbb{F}), \circledast)$. Differently of central 
  polynomials, there exists non trivial $*$-central 
  polynomial for $UT_2(\mathbb{F})$. But this is not true in general. In this paper, we prove that if $n\geq 3$
  then 
  \[C(UT_n(\mathbb{F}), \circledast)=Id(UT_n(\mathbb{F}), \circledast)+\mathbb{F}\]
  for all $\mathbb{F}$ and $\circledast$.

\section{Involution} \label{secaocomasduasinvolucoes}

We suggest to the reader to see Section 2, page 546 and Section 5 of \cite{vinkossca}. We will use several results 
from there. 

Given $n\geq 1$, let $J \in M_n(\mathbb{F})$ and $D \in M_{2m}(\mathbb{F})$ (if $n=2m$) be the following matrices:
\[J=\left[\begin{array}{cccc}
0 & \dots & 0 & 1 \\
0 & \dots & 1 & 0 \\
\vdots & \udots & \vdots & \vdots \\
1 & \dots & 0 & 0 \\
\end{array}\right] \ \ \mbox{and} \ \ 
D=\left[\begin{array}{cc}
I_m & 0 \\
0 & -I_m \\
\end{array}\right],\]
where $I_m$ is the identity matrix. Define the maps 
$\ast: UT_n(\mathbb{F}) \rightarrow UT_n(\mathbb{F})$ and $s: UT_{n}(\mathbb{F}) \rightarrow UT_{n}(\mathbb{F})$
(if $n$ is even)
by
\[A^{\ast}=JA^tJ \ \ \mbox{and} \ \ A^s=DA^{\ast}D,\]
where $A^t$ is the transpose matrix of $A$. We known that $*$ and $s$ are involutions on  $UT_{n}(\mathbb{F})$. 
Moreover:
\begin{itemize}
\item[a)] The involution $*$ is not equivalent to  $s$.

\item[b)] Every involution on $UT_{n}(\mathbb{F})$ is equivalent  either to $*$ or to  $s$.
\end{itemize}
See \cite[Propositions 2.5 and 2.6]{vinkossca} for details. In particular, we have the following corollary:

\begin{corollary} \label{corolarioinvolucoestresportres}
If $\circledast$ is an involution on $UT_{3}(\mathbb{F})$ then $\circledast$ is equivalent  to $*$,
where
\begin{equation}\label{involucaotresportres}
\left[
\begin{array}{ccc}
a_{11}&a_{12}&a_{13} \\
0&a_{22}&a_{23} \\
0&0&a_{33}
\end{array}
\right]^*
=
\left[
\begin{array}{ccc}
a_{33}&a_{23}&a_{13} \\
0&a_{22}&a_{12} \\
0&0&a_{11}
\end{array}
\right].
\end{equation}
Moreover, $Id(UT_3(\mathbb{F}), \circledast)=Id(UT_3(\mathbb{F}), *)$.
\end{corollary}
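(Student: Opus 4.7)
The plan is to read this as a direct consequence of the classification of involutions on $UT_n(\mathbb{F})$ recalled just above. Item b) of the classification says that every involution on $UT_n(\mathbb{F})$ is equivalent to $*$ or $s$, and $s$ is only defined for even $n$. Since $3$ is odd, the involution $s$ is not available, so any $\circledast$ on $UT_3(\mathbb{F})$ must be equivalent to $*$. The explicit formula \eqref{involucaotresportres} is then obtained by a direct computation of $A^* = JA^tJ$ with $J$ the $3\times 3$ reverse identity: multiplying on the left by $J$ reverses the rows of $A^t$ and multiplying on the right by $J$ reverses its columns, which has the effect of swapping $a_{11}$ with $a_{33}$, swapping $a_{12}$ with $a_{23}$, and fixing $a_{13}$ and $a_{22}$.

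For the equality $Id(UT_3(\mathbb{F}),\circledast) = Id(UT_3(\mathbb{F}),*)$, the plan is to use the definition of equivalence of involutions. By item a)--b), equivalence means there exists an algebra automorphism $\varphi : UT_3(\mathbb{F}) \to UT_3(\mathbb{F})$ with $\varphi(A^{\circledast}) = \varphi(A)^{*}$ for every $A$. Such a $\varphi$ restricts to bijections between the symmetric parts and between the skew-symmetric parts of the two involution structures. Consequently, for any $f(y_1,\ldots,y_m,z_1,\ldots,z_n) \in \mathbb{F}\langle Y\cup Z\rangle$ and any $a_i \in UT_3(\mathbb{F})^+$, $b_j \in UT_3(\mathbb{F})^-$ with respect to $\circledast$, one has
\[
\varphi\bigl(f(a_1,\ldots,a_m,b_1,\ldots,b_n)\bigr) = f\bigl(\varphi(a_1),\ldots,\varphi(a_m),\varphi(b_1),\ldots,\varphi(b_n)\bigr),
\]
and the right-hand arguments are symmetric and skew-symmetric with respect to $*$. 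Injectivity of $\varphi$ then shows $f \in Id(UT_3(\mathbb{F}),\circledast)$ iff $f \in Id(UT_3(\mathbb{F}),*)$.

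There is no genuine obstacle here: the corollary is simply the specialization of \cite[Propositions 2.5 and 2.6]{vinkossca} to $n=3$, combined with the general fact that equivalent involutions satisfy the same $*$-identities. The only concrete computation required is the verification of the matrix formula for $A^*$, which is mechanical.
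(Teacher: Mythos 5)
Your proposal is correct and follows the same route the paper intends: the corollary is stated as an immediate consequence of the cited classification (for odd $n$ the involution $s$ is unavailable, so every involution on $UT_3(\mathbb{F})$ is equivalent to $*$), the formula is the mechanical computation of $JA^tJ$, and the equality of the $*$-identity ideals follows from the standard fact that an automorphism intertwining two involutions carries symmetric/skew-symmetric elements to symmetric/skew-symmetric elements. Nothing is missing.
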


\section{$\ast$-Polynomial Identities for $UT_3(\mathbb{F})$}

Let $*$ be the involution on $UT_3(\mathbb{F})$ defined in (\ref{involucaotresportres}). From now on $\mathbb{F}$
is an infinite field of characteristic $p>2$. We denote 
\[UT_3(\mathbb{F})=UT_3 \ \mbox{and} \ Id(UT_3(\mathbb{F}),\ast )=Id.\]
In this section we will describe $Id$.

The vector spaces of symmetric and skew-symmetric elements of $UT_3$ are respectively 
\[UT_3^+=\mbox{span} \ \{e_{11}+e_{33}, \ e_{22}, \ e_{12}+e_{23}, \ e_{13}\} \ \ \mbox{and} \ \ 
UT_3^-=\mbox{span} \ \{e_{11}-e_{33},  \ e_{12}-e_{23}\} .\]
Thus, we have the following lemma.

\begin{lemma}\label{skewsymmetricine13}
If $f(y_1,\ldots,y_n,z_1,\ldots,z_m)\in \mathbb{F}\langle Y\cup Z \rangle $ and 
\[f(a_1,\ldots,a_n,b_1,\ldots,b_m) \in \mbox{span}\{e_{13}\}\]
for all $a_1,\ldots,a_n \in UT_3^+$, \ $b_1,\ldots,b_m \in UT_3^-$, then
\[(f-f^*) \in Id.\]
\end{lemma}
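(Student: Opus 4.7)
The plan is very short: I would exploit the compatibility between the involution $*$ on the free algebra $\mathbb{F}\langle Y\cup Z\rangle$ and the involution $*$ on $UT_3$ under $*$-evaluations, together with the explicit fact that $e_{13}$ is a symmetric element.

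First I would establish the standard compatibility lemma: for every $g \in \mathbb{F}\langle Y\cup Z\rangle$ and all symmetric $a_i \in UT_3^+$ and skew-symmetric $b_j \in UT_3^-$,
\[
g^*(a_1,\ldots,a_n,b_1,\ldots,b_m) \;=\; \bigl(g(a_1,\ldots,a_n,b_1,\ldots,b_m)\bigr)^*,
\]
where on the left $*$ is the involution on the free algebra (reverse the order of each monomial and apply $y_i \mapsto y_i$, $z_i \mapsto -z_i$) and on the right it is the involution on $UT_3$. This is a routine check on a monomial $w = w_1 w_2 \cdots w_k$: by definition $w^* = w_k^* \cdots w_1^*$, and evaluating at symmetric/skew-symmetric arguments reproduces $(w(a,b))^*$ since each $y_i$ is fixed and each $z_i$ is negated by the involution of $UT_3$, matching the sign conventions of the free involution. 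Extending by linearity gives the lemma.

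Next I would read off from the explicit form of $*$ in (\ref{involucaotresportres}) that $e_{13}^* = e_{13}$, so every element of $\mathrm{span}\{e_{13}\}$ is symmetric. Combining with the hypothesis $f(a_1,\ldots,a_n,b_1,\ldots,b_m) = \lambda e_{13}$ for some $\lambda \in \mathbb{F}$ depending on the evaluation, the compatibility gives
\[
f^*(a_1,\ldots,a_n,b_1,\ldots,b_m) \;=\; (\lambda e_{13})^* \;=\; \lambda e_{13} \;=\; f(a_1,\ldots,a_n,b_1,\ldots,b_m),
\]
so $(f-f^*)$ vanishes on every symmetric/skew-symmetric substitution, i.e. $f - f^* \in Id$.

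There is essentially no obstacle here; the only thing one must be careful with is the sign bookkeeping in the compatibility lemma between the two involutions, which forces the pattern $z_i^* = -z_i$ in the free algebra to match $(-b_i)$ on the $UT_3$ side. Once that is in place, the argument is a one-line computation resting on the single observation that $e_{13}$ is symmetric for the involution $*$.
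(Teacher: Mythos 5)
Your proof is correct and follows essentially the same route as the paper's: both rest on the compatibility $g^*(a,b)=(g(a,b))^*$ for admissible substitutions and on the fact that $e_{13}$ is symmetric, the paper merely phrasing the final step as ``a skew-symmetric element equal to $\alpha e_{13}$ (symmetric) must vanish'' where you instead compute $f^*(a,b)=f(a,b)$ directly.
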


\begin{proof}
Since $f-f^*$ is skew-symmetric we have that $(f-f^*)(a_1,\ldots,a_n,b_1,\ldots,b_m)$ is skew-symmetric.
But $(f-f^*)(a_1,\ldots,a_n,b_1,\ldots,b_m)=\alpha e_{13}$ is symmetric,  where $\alpha \in \mathbb{F}$. Thus $\alpha =0$ and
$(f-f^*) \in Id$.
\end{proof}

If $f\in \mathbb{F}\langle Y\cup Z \rangle^+$ and $g\in \mathbb{F}\langle Y\cup Z \rangle^-$, we denote
$|f|=1$ and $|g|=0$.
Thus, if $h \in \mathbb{F}\langle Y\cup Z \rangle^+ \cup \mathbb{F}\langle Y\cup Z \rangle^-$ then
\begin{equation}\label{hstar}
h^*=-(-1)^{|h|}h.
\end{equation}
From now on, we denote by $x_i$ any element of $ \{ y_i , z_i \} $ and write $|[x_i,x_j]|=|x_ix_j|$. Here,
\[[x_i,x_j]=x_ix_j-x_jx_i \ \ \mbox{and} \ \ [x_1,\ldots , x_n]=[[x_1,\ldots , x_{n-1}],x_n]\]
are the commutators.

\begin{proposition}\label{ident}
The following polynomials belong to $Id$:
\begin{itemize}
\item[(i)] $s_3(z_1,z_2,z_3)=z_1[z_2,z_3]-z_2[z_1,z_3]+z_3[z_1,z_2]$,
\item[(ii)] $(-1)^{|x_1x_2|}[x_1,x_2][x_3,x_4]-(-1)^{|x_3x_4|}[x_3,x_4][x_1,x_2]$,
\item[(iii)] $(-1)^{|x_1x_2|}[x_1,x_2][x_3,x_4]-(-1)^{|x_1x_3|}[x_1,x_3][x_2,x_4] +(-1)^{|x_1x_4|}[x_1,x_4][x_2,x_3]$,
\item[(iv)] $z_1[x_3,x_4]z_2+(-1)^{|x_3x_4|}z_2[x_3,x_4]z_1$,
\item[(v)] $[x_1,x_2]z_5[x_3,x_4]$,
\item[(vi)] $z_1[x_4,x_5]z_2x_3+(-1)^{|x_3|}x_3z_1[x_4,x_5]z_2$.
\end{itemize}
\begin{proof}
Since $s_3$ is the standard polynomial and $\dim UT_3^-=2$ it follows that $s_3 \in Id$.

By (\ref{hstar}), the polynomial (ii) has the form $f-f^*$ where $f=(-1)^{|x_1x_2|}[x_1,x_2][x_3,x_4]$. Thus, by Lemma \ref{skewsymmetricine13}, it is a $*$-identity for $UT_3$. 

Defining $f=z_1[x_3,x_4]z_2$, we can use the same argument as in (ii) to prove that (iv) belongs to $Id$.

Defining $f=z_1[x_4,x_5]z_2x_3$, we can use (\ref{hstar}), Lemma \ref{skewsymmetricine13} and (iv) to prove that 
(vi) belongs to $Id$.

The proof that (iii) and (v) are $*$-identities for $UT_3$ consists of a straightforward 
verification. 
\end{proof}
\end{proposition}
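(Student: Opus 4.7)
My plan is to handle the six items in the order they are listed, grouping (ii), (iv), (vi) under a common strategy and treating (iii) separately as the delicate case.

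Item (i) is immediate from dimension: $s_3$ is the multilinear, alternating standard polynomial of degree three, and $\dim_{\mathbb{F}}UT_3^- = 2$, so any three skew-symmetric substitutions are linearly dependent and the alternating sum vanishes.

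For (ii), (iv), and (vi), the strategy is to exhibit each polynomial in the form $f - f^*$ and apply Lemma \ref{skewsymmetricine13}. Two structural facts about $UT_3$ drive everything: (a) every commutator $[a,b]$ lies in the strictly upper triangular ideal $\mathfrak{a} := \mathrm{span}\{e_{12}, e_{13}, e_{23}\}$, and (b) $\mathfrak{a}^2 = \mathbb{F}e_{13}$, with $e_{13}\cdot UT_3 \subseteq \mathbb{F}e_{13}$. For (ii), I take $f = (-1)^{|x_1x_2|}[x_1,x_2][x_3,x_4]$; expanding $f^*$ via (\ref{hstar}) and $|[x_i,x_j]| = |x_ix_j|$ produces exactly the polynomial in (ii), while (a)-(b) force every evaluation of $f$ into $\mathbb{F}e_{13}$, so Lemma \ref{skewsymmetricine13} concludes. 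For (iv), $f = z_1[x_3,x_4]z_2$ works: since every skew element of $UT_3$ has zero $(2,2)$-entry, left-multiplying $\mathfrak{a}$ by a skew element stays in $\mathrm{span}\{e_{12},e_{13}\}$, and a further right-multiplication by a skew element lands in $\mathbb{F}e_{13}$. For (vi), $f = z_1[x_4,x_5]z_2 x_3$ lies in $\mathbb{F}e_{13}$ by the analysis of (iv) combined with $e_{13}\cdot UT_3 \subseteq \mathbb{F}e_{13}$; Lemma \ref{skewsymmetricine13} gives $f - f^* \in Id$, and then rewriting the factor $z_2[x_4,x_5]z_1$ that appears in the expansion of $f^*$ modulo (iv) converts $f - f^*$ into the form stated in (vi).

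For (v), a direct calculation suffices: writing a generic $z_5 = \alpha(e_{11}-e_{33}) + \beta(e_{12}-e_{23})$ and $[x_1,x_2]\in\mathfrak{a}$ generically, one checks $[x_1,x_2]z_5 \in \mathrm{span}\{e_{13}, e_{23}\}$, and then $e_{13}\cdot\mathfrak{a} = e_{23}\cdot\mathfrak{a} = 0$ kills the final right-multiplication by $[x_3,x_4] \in \mathfrak{a}$.

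The delicate case is (iii). My plan is to parametrize each input by $\varepsilon_i := (x_i)_{11}-(x_i)_{22}$ and $\mu_i := (x_i)_{12}$, using the constraints $(x_i)_{22}-(x_i)_{33} = (-1)^{|x_i|}\varepsilon_i$ and $(x_i)_{23} = -(-1)^{|x_i|}\mu_i$ that are forced by $x_i \in UT_3^+ \cup UT_3^-$. A short computation then yields $[x_i,x_j]_{12} = \varepsilon_i\mu_j - \varepsilon_j\mu_i =: p_{ij}$ and $[x_i,x_j]_{23} = -(-1)^{|x_i|+|x_j|}p_{ij}$. Because the only nonzero product of strictly upper triangular basis elements is $e_{12}e_{23}=e_{13}$, the evaluation of (iii) reduces to a scalar multiple of $e_{13}$. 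The three parity exponents in (iii) collapse to the common value $|x_1|+|x_2|+|x_3|+|x_4|$, and factoring this out reduces (iii) to the Pl\"{u}cker relation $p_{12}p_{34} - p_{13}p_{24} + p_{14}p_{23} = 0$ for the $2\times 2$ minors of the $2\times 4$ matrix with columns $(\varepsilon_i,\mu_i)^{t}$. The main obstacle is this parity bookkeeping: verifying that $|x_ix_j|$ is additive modulo $2$ and that the sign relating $[x_i,x_j]_{12}$ to $[x_i,x_j]_{23}$ combines correctly with the coefficients in (iii). Once these signs align, the Pl\"{u}cker identity completes the proof.
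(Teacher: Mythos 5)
Your proposal follows the paper's proof essentially verbatim: item (i) by the dimension count, items (ii), (iv), (vi) by writing each as $f-f^*$ with $f$ evaluating into $\mathrm{span}\{e_{13}\}$ and invoking Lemma \ref{skewsymmetricine13} (using (iv) to massage (vi)), and items (iii), (v) by direct computation. Your explicit verifications of (iii) (the sign bookkeeping reducing it to the Pl\"ucker relation $p_{12}p_{34}-p_{13}p_{24}+p_{14}p_{23}=0$) and of (v) are correct and simply fill in what the paper calls a ``straightforward verification.''
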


\begin{notation}
From now on, we denote by $I$ the $T(\ast)$-ideal generated by the polynomials of Proposition \ref{ident}. We will deduce some consequences of these identities.
\end{notation}

\begin{lemma} \label{relacoes1}
The following polynomials belong to $I$:
\begin{itemize}
\item[i)] $[x_1,x_2][x_3,x_4][x_5,x_6]$,
\item[ii)] $[x_1,x_2][x_3,x_4]x_5+(-1)^{|x_5|}x_5[x_1,x_2][x_3,x_4]$,
\item[iii)] $[x_1,x_2,x_5][x_3,x_4]-(-1)^{|x_5|}[x_1,x_2][x_3,x_4,x_5]$.
\end{itemize}
\begin{proof}
The proof of this lemma is similar to the proof of  Lemma 5.2, Lemma 5.3 and Lemma 5.5 in \cite{vinkossca}. 
\end{proof}
\end{lemma}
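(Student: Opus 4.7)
The strategy, following the proofs of Lemmas~5.2, 5.3, 5.5 in \cite{vinkossca}, is to derive these three identities from the six generators of $I$ listed in Proposition~\ref{ident}. I plan to prove them in the order (i), then observe that (ii) and (iii) are equivalent modulo identity (v) of Proposition~\ref{ident}, and finally prove (ii).

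For (i), I would split by the parities $P_j := |x_{2j-1}| + |x_{2j}|$ for $j = 1, 2, 3$: the commutator $[x_{2j-1}, x_{2j}]$ is skew when $P_j$ is even and symmetric when $P_j$ is odd. If at least one $P_j$ is even, then identity (ii) of Proposition~\ref{ident} lets me rearrange the triple product so that the skew commutator sits in the middle slot, whereupon (v) of Proposition~\ref{ident} (with that skew commutator substituted for $z_5$) kills the whole expression. When all three $P_j$ are odd—so that every commutator is symmetric—a more delicate argument is required: following \cite{vinkossca}, one combines several applications of (iii) of Proposition~\ref{ident} with (v) to express $[x_1, x_2][x_3, x_4][x_5, x_6]$ as a linear combination of triple products each containing at least one skew commutator factor, which are killed by the previous case.

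Expanding the triple commutators in (iii) via the Leibniz rule yields
\[ [x_1, x_2, x_5][x_3, x_4] - (-1)^{|x_5|}[x_1, x_2][x_3, x_4, x_5] = \bigl(1 + (-1)^{|x_5|}\bigr)[x_1, x_2]\, x_5\, [x_3, x_4] - (-1)^{|x_5|}\bigl([x_1, x_2][x_3, x_4]\, x_5 + (-1)^{|x_5|}\, x_5\, [x_1, x_2][x_3, x_4]\bigr). \]
The first term on the right vanishes identically when $|x_5| = 1$ and equals $2\,[x_1, x_2]\, z_5\, [x_3, x_4]$ when $|x_5| = 0$, which lies in $I$ by (v) of Proposition~\ref{ident}. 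Hence the identities (ii) and (iii) of this lemma are equivalent modulo $I$, and it remains to establish (ii).

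To prove (ii), I would perform a case analysis on the parities of $x_1, x_2, x_3, x_4$. The key tools are identities (iv) and (vi) of Proposition~\ref{ident}, which super-commute expressions of the form $z_1 [x, y] z_2$ past $x_5$; combined with (ii) of Proposition~\ref{ident} (super-commutativity of commutator products), they yield (ii) of this lemma in the cases where sufficiently many of $x_1, x_2, x_3, x_4$ are skew so that the expression $[x_1, x_2][x_3, x_4]$ can be brought into $z$-sandwiched form. In the remaining mixed-parity subcases I would substitute skew polynomials—Jordan products $y z + z y$ together with skew commutators $[y, y']$ or $[z, z']$—for the $z_i$ in (iv) and (vi), then carefully track the signs that arise. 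I expect this final step, handling the subcases where $[x_1, x_2]$ or $[x_3, x_4]$ is symmetric, to be the main technical obstacle.
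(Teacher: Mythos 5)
Your reduction of item (iii) to item (ii) is correct and complete: the Leibniz expansion you display is right, and the difference of the two polynomials is $(1+(-1)^{|x_5|})[x_1,x_2]x_5[x_3,x_4]$, which is either $0$ or $2[x_1,x_2]z_5[x_3,x_4]\in I$ by Proposition~\ref{ident}(v). For item (i), the case where some factor is a skew commutator is also handled correctly via Proposition~\ref{ident}(ii) and (v). Since the paper itself only cites \cite{vinkossca} here, there is no in-text proof to compare against; judged on its own, however, your proposal has two genuine gaps. The first concerns the all-symmetric case of (i): the mechanism you describe cannot work. When every factor has mixed type $[y_a,z_b]$, Proposition~\ref{ident}(iii) applied to an adjacent pair gives $[y_a,z_b][y_c,z_d]\equiv -[y_a,y_c][z_b,z_d]+[y_a,z_d][y_c,z_b]$ and Proposition~\ref{ident}(ii) gives $[y_a,z_b][y_c,z_d]\equiv[y_c,z_d][y_a,z_b]$, so modulo triple products containing a skew commutator these generators merely permute the entries of an all-mixed product with coefficient $+1$; no sequence of such rewrites can express it as a combination of skew-containing products. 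What does work is to expand the (necessarily mixed) middle factor as $[y,z]=2yz-(yz+zy)$: the Jordan product $yz+zy$ is skew, so $[x_1,x_2](yz+zy)[x_5,x_6]$ is an instance of (v), and $[x_1,x_2]yz[x_5,x_6]=y\,[x_1,x_2]z[x_5,x_6]+[[x_1,x_2],y]\,z[x_5,x_6]$ is a sum of two further instances of (v). This argument in fact covers item (i) uniformly (skew middle factor: apply (v) directly; mixed middle factor: expand), making your case split unnecessary.

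The second, more serious gap is that item (ii) --- which after your reduction carries the entire content of items (ii) and (iii) --- is not proved. You name the relevant tools (Proposition~\ref{ident}(iv) and (vi), substitution of skew polynomials such as $yz+zy$ and skew commutators) but explicitly defer the mixed-parity subcases as ``the main technical obstacle.'' Bringing $[x_1,x_2][x_3,x_4]$ into the $z$-sandwiched form $z[u,v]z'$ required by generator (vi) is exactly where the work lies (it is essentially the content of Lemma~\ref{relacoes3}(i) and its variants, whose proofs the paper also outsources to \cite{vinkossca}), and without that step the proposal establishes only item (i) and the equivalence of (ii) and (iii).
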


\begin{lemma}\label{relacoes2}
Consider the quotient algebra $\mathbb{F}\langle Y,Z \rangle / I$. If $\sigma \in \mbox{Sym}(n)$ and $\rho \in \mbox{Sym}(m)$ then:
\begin{itemize}
\item[a)] $[z_a,z_b,z_{\sigma(1)},\dots,z_{\sigma(n)}]+I=[z_a,z_b,z_1,\dots,z_n]+I$,
\item[b)] $z_{\rho(1)}\dots z_{\rho(m)}[x_a,x_b,x_{\sigma(1)},\dots ,x_{\sigma(n)}][x_c,x_d]+I= \\ z_1\dots z_m[x_a,x_b,x_1,\dots,x_n][x_c,x_d]+I$,
\item[c)] $[x_a,x_b,x_{\sigma(1)},\dots ,x_{\sigma(n)},y_i]+I=[x_a,x_b,x_1,\dots ,x_n,y_i]+I$,
\item[d)] $x_{\sigma(1)}\dots x_{\sigma(n)}z_j[x_a,x_b]+I=x_1\dots x_nz_j[x_a,x_b]+I$,
\item[e)] $[x_a,x_b]z_jx_{\sigma(1)}\dots x_{\sigma(n)}+I=[x_a,x_b]z_jx_1\dots x_n+I$.
\end{itemize}
\begin{proof}
Here, $\mbox{Sym}(n)$ 
is the symmetric group of $\{1,\ldots , n\}$. The proof of this lemma is similar to the proof of  Remark 5.4 and Remark 5.19 in \cite{vinkossca}.  
\end{proof}
\end{lemma}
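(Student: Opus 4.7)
The strategy is to reduce each claim to verifying invariance under an adjacent transposition, and then to show that each such swap yields an element of $I$ via the identities of Proposition \ref{ident} and Lemma \ref{relacoes1}.

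For (a), (c), and the permutation of the middle $x$'s in (b), swapping two adjacent entries inside the long commutator produces, by the Jacobi identity, a term of the form $[u,[x_j,x_{j+1}]]$ with $u=[x_a,x_b,x_{\sigma(1)},\ldots,x_{\sigma(j-1)}]$ (the rest of the original commutator then re-enters as an outer bracket, which preserves membership in $I$). Applying Lemma \ref{relacoes1}(iii) iteratively, the internal variables of $u$ can be migrated into $[x_j,x_{j+1}]$, and Proposition \ref{ident}(ii) then swaps the two resulting short commutators. In (a) all generators are skew, so the parities of the two commutators automatically match and one gets $[u,[x_j,x_{j+1}]]\in I$ at once. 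In (b) and (c) the parities may differ and one is left with a residue of the form $w\cdot[c_1][c_2]$; after multiplication by the trailing factor ($[x_c,x_d]$ in (b), a further bracket with $y_i$ and possibly more $x$'s in (c)) this residue is absorbed using Lemma \ref{relacoes1}(i) together with Lemma \ref{relacoes1}(ii) (the latter kills $[[c_1][c_2],y_i]$ because $|y_i|=1$).

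The permutation of the leading $z$'s in (b), together with parts (d) and (e), follows a common template. An adjacent swap produces a commutator sitting in the middle of a word of variables, followed either by the two commutators $[x_a,x_b,\ldots][x_c,x_d]$ (in (b)) or by a block of the form $z_j[x_a,x_b]$ (in (d), (e)). Pushing each remaining letter past this new commutator via the expansion $[c]\cdot x = x\cdot[c] + [c,x]$ yields a sum whose summands either display three commutators in a row, which lie in $I$ by Lemma \ref{relacoes1}(i) (this settles (b) and the mirror part of (e)), or exhibit the pattern $[c_1]\,z_j\,[c_2]$, which lies in $I$ by Proposition \ref{ident}(v) (this settles (d) and (e)).

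The main obstacle will be the careful sign bookkeeping when iterating Lemma \ref{relacoes1}(iii) and Proposition \ref{ident}(ii) to establish that $u$ and $[x_j,x_{j+1}]$ commute modulo $I$: the identity in Proposition \ref{ident}(ii) carries parity-dependent signs, and one must verify in each of (a)–(c) that any parity mismatch between $u$ and $[x_j,x_{j+1}]$ is absorbed by the surviving trailing factor. Once this verification is carried out, each part reduces to routine manipulations analogous to those of Remarks 5.4 and 5.19 of \cite{vinkossca}.
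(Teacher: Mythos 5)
Your proof is correct and is essentially the argument the paper intends: the paper's own proof is only a pointer to Remarks 5.4 and 5.19 of \cite{vinkossca}, and those arguments proceed exactly as you do --- reduce to adjacent transpositions, use the Jacobi identity to produce a term $[u,[x_i,x_{i+1}]]$ (inside a commutator) or a word with a new short commutator inserted (in the monomial parts), and then absorb these via Proposition \ref{ident}(ii),(v) and Lemma \ref{relacoes1}(i)--(iii). Your flagged ``main obstacle'' is indeed where the content lies, and it resolves as you predict: in (a) the parities always agree since iterated commutators of skew variables are skew, while in (b) and (c) the residue $2u[x_i,x_{i+1}]$, after the remaining brackets are unwound with Lemma \ref{relacoes1}(ii), is killed either by the trailing $[x_c,x_d]$ (three commutators in a row, Lemma \ref{relacoes1}(i)) or by the final bracket with the symmetric $y_i$.
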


The next result is the Lemma 5.6 of \cite{vinkossca}. In the page 554 line 6 there is a little error and we correct it bellow.

\begin{lemma}\label{produtos2comut}
For all $n\geq 0$, the following polynomial belongs to $I$:
\begin{align*}
& (-1)^{|x_4x_3|}[x_4,x_3,x_{i_1},\dots ,x_{i_n}][x_2,x_1]\\ - & (-1)^{|x_4x_2|}[x_4,x_2,x_{i_1},\dots ,x_{i_n}][x_3,x_1] \\
  + & (-1)^{|x_3x_2|}[x_3,x_2,x_{i_1},\dots ,x_{i_n}][x_4,x_1].
\end{align*}

\begin{proof}
The proof is by induction on $n$. Note that it suffice to prove that the following polynomial is in $I$:
\begin{align*}
 g & =(-1)^{|x_4x_3|}[x_4,x_3]x_5[x_2,x_1] \\
& -(-1)^{|x_4x_2|}[x_4,x_2]x_5[x_3,x_1] \\
& +(-1)^{|x_3x_2|}[x_3,x_2]x_5[x_4,x_1].
\end{align*}

If $x_5$ is skew-symmetric then $g \in I$ by Proposition \ref{ident} - (v). 

Suppose that $x_5$ is symmetric and denote $x_5=y_5$. Write 
\begin{align*}
f(x_1,x_2,x_3,x_4) & = (-1)^{|x_4x_3|}[x_4,x_3][x_2,x_1] \\
 & -(-1)^{|x_4x_2|}[x_4,x_2][x_3,x_1] \\
 & +(-1)^{|x_3x_2|}[x_3,x_2][x_4,x_1].
 \end{align*}

By the identities (ii) and (iii) of Proposition \ref{ident}, we have that $f\in I$ and therefore 
$f(y_5x_1,x_2,x_3,x_4) \in I$. Now we use the equality 
\[ [x_i,y_5x_1]=y_5[x_i,x_1]+[x_i,y_5]x_1 \]
to finish the proof.
\end{proof}
\end{lemma}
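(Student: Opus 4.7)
I would proceed by induction on $n$.

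For the base case $n=0$, the polynomial reduces to
$$f=(-1)^{|x_4x_3|}[x_4,x_3][x_2,x_1]-(-1)^{|x_4x_2|}[x_4,x_2][x_3,x_1]+(-1)^{|x_3x_2|}[x_3,x_2][x_4,x_1].$$
Applying antisymmetry of the bracket to each of the two commutator factors in every summand (the two sign flips cancel, and the parity tags are unchanged since $|x_ix_j|=|x_jx_i|$), and then transposing the order of the two factors modulo $I$ via Proposition \ref{ident}(ii), the polynomial $f$ becomes precisely the polynomial of Proposition \ref{ident}(iii) after relabelling, hence lies in $I$.

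For the inductive step I would expand the outermost slot of the inner iterated commutator by $[u,x_{i_n}]=ux_{i_n}-x_{i_n}u$, which rewrites the target polynomial $P_n$ as $A_n-x_{i_n}P_{n-1}$, with
$$A_n=\sum \pm\,[x_a,x_b,x_{i_1},\dots,x_{i_{n-1}}]\,x_{i_n}\,[x_c,x_d].$$
The induction hypothesis gives $x_{i_n}P_{n-1}\in I$, leaving us to prove $A_n\in I$. Iterating the same peel-off on the remaining $n-1$ slots of the inner commutator expresses $A_n$ modulo $I$ as a sum of terms of the form $w_1\cdot g\cdot w_2$, where $w_1,w_2$ are monomials in the $x_{i_k}$ and
$$g=(-1)^{|x_4x_3|}[x_4,x_3]\,x_5\,[x_2,x_1]-(-1)^{|x_4x_2|}[x_4,x_2]\,x_5\,[x_3,x_1]+(-1)^{|x_3x_2|}[x_3,x_2]\,x_5\,[x_4,x_1]$$
with a single middle variable $x_5$. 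Since $g$ is linear in $x_5$, it suffices to prove $g\in I$ for each parity of $x_5$.

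If $x_5=z_5$ is skew-symmetric, every summand $[x_a,x_b]z_5[x_c,x_d]$ lies in $I$ by Proposition \ref{ident}(v) and we are done. If $x_5=y_5$ is symmetric, I return to the base-case polynomial $f$ and substitute $y_5 x_1$ into the $x_1$-slot; the substitution is legitimate because $f\in I$ for each parity of $x_1$, and decomposing $y_5 x_1$ into its symmetric and skew parts recovers $f(y_5x_1,x_2,x_3,x_4)\in I$ (using $p\ne 2$). Expanding each inner bracket via the Leibniz rule $[x_c,y_5x_1]=y_5[x_c,x_1]+[x_c,y_5]x_1$ cleanly separates
$$f(y_5x_1,x_2,x_3,x_4)=g(y_5)+f(y_5,x_2,x_3,x_4)\cdot x_1;$$
the second summand equals a base-case polynomial (with $y_5$ in the $x_1$-slot) multiplied on the right by $x_1$, and hence lies in $I$, forcing $g(y_5)\in I$.

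\textbf{Main obstacle.} The delicate step is the symmetric sub-case: the correct substitution $x_1\mapsto y_5x_1$ is not obvious, and one must verify that its Leibniz expansion produces $g(y_5)$ plus a residue that is again an instance of the base-case identity, closing the induction. The reduction from $P_n$ to $g$ and the skew sub-case are essentially bookkeeping, using only Proposition \ref{ident}(v) and the standard identity $[u,v]=uv-vu$.
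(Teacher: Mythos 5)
Your proof is correct and takes essentially the same route as the paper's: induction on $n$ reducing everything to the single-middle-variable polynomial $g$, Proposition \ref{ident}(v) for skew-symmetric $x_5$, and for symmetric $x_5$ the substitution $x_1\mapsto y_5x_1$ into the $n=0$ identity $f$ followed by the Leibniz expansion $[x_i,y_5x_1]=y_5[x_i,x_1]+[x_i,y_5]x_1$. The only difference is that you make explicit the peel-off reduction and the symmetric/skew decomposition justifying the substitutions (using $p\neq 2$), which the paper leaves implicit.
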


\begin{lemma}\label{relacoes3}
The following polynomials belong to $I$:
\begin{itemize}
\item[i)] $[z_1,z_2][x_3,x_4]-z_1[x_3,x_4]z_2$, when $|x_3|=|x_4|$.
\item[ii)] $[z_1,z_2][z_3,y_4]+z_1[z_2,y_4]z_3-z_2[z_1,y_4]z_3$.
\end{itemize}
\begin{proof}
The proof of this lemma is similar to the proof of  Lemma 5.10 in \cite{vinkossca}.
\end{proof}
\end{lemma}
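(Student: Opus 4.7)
The plan is to prove (i) first and then leverage it, together with the remaining identities of Proposition~\ref{ident}, in the proof of (ii).

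For (i), the hypothesis $|x_3|=|x_4|$ gives $|[x_3,x_4]|=0$, so $[x_3,x_4]$ is a skew-symmetric polynomial that may be substituted for a skew-symmetric variable of any element of $I$. Applying this substitution in $s_3(z_1,z_2,z_3)\in I$ (Proposition~\ref{ident}(i)), replacing $z_3$ by $[x_3,x_4]$, I obtain
\[
z_1[z_2,[x_3,x_4]] - z_2[z_1,[x_3,x_4]] + [x_3,x_4][z_1,z_2] \in I.
\]
Since $|z_1 z_2|=|x_3 x_4|=0$, Proposition~\ref{ident}(ii) lets me commute $[x_3,x_4]$ and $[z_1,z_2]$ modulo $I$. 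Expanding the inner brackets as $[z_i,[x_3,x_4]] = z_i[x_3,x_4] - [x_3,x_4]z_i$ and invoking Proposition~\ref{ident}(iv) (with $|x_3 x_4|=0$, so $z_a[x_3,x_4]z_b + z_b[x_3,x_4]z_a \in I$) to combine the two resulting palindromes, I arrive at
\[
2[z_1,z_2][x_3,x_4] \equiv 2z_1[x_3,x_4]z_2 \pmod{I}.
\]
Since $p>2$, dividing by $2$ concludes (i).

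For (ii), I first apply the graded Jacobi identity of Proposition~\ref{ident}(iii) to the four variables $z_1, z_2, z_3, y_4$; with $|z_1 y_4|=1$ the only odd pair, this reads
\[
[z_1,z_2][z_3,y_4] \equiv [z_1,z_3][z_2,y_4] + [z_1,y_4][z_2,z_3] \pmod{I}.
\]
The two summands on the right are mixed-parity commutator products to which part (i) does not apply directly. My plan is to rearrange these via Proposition~\ref{ident}(ii) (super-anticommutativity of commutator products of opposite parity), and simultaneously to rewrite the terms $z_1[z_2,y_4]z_3$ and $z_2[z_1,y_4]z_3$ using Proposition~\ref{ident}(iv) with $|x_3 x_4|=1$, which yields $z_a[x_c,x_d]z_b \equiv z_b[x_c,x_d]z_a \pmod{I}$ and allows me to place $z_3$ on the outside. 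Part (i), already proved, is then invoked at an intermediate stage where a skew--skew commutator product arises, converting $[z_a,z_b][z_c,z_d]$ into $z_a[z_c,z_d]z_b$. The main obstacle will be tracking the many sign factors arising from the mixed parities and arranging the rearrangements so that everything telescopes; the characteristic-$\neq 2$ hypothesis is used once more to divide by $2$ at the end. The overall structure mirrors that of Lemma~5.10 in~\cite{vinkossca}.
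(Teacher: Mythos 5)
Your derivation of part (i) is complete and correct: substituting the skew-symmetric element $[x_3,x_4]$ (legitimate since $|x_3|=|x_4|$ forces $|[x_3,x_4]|=0$) for $z_3$ in $s_3\in I$, then using Proposition~\ref{ident}(ii) to commute $[x_3,x_4]$ past $[z_1,z_2]$ and Proposition~\ref{ident}(iv) to identify $z_2[x_3,x_4]z_1$ with $-z_1[x_3,x_4]z_2$, does yield $2\bigl([z_1,z_2][x_3,x_4]-z_1[x_3,x_4]z_2\bigr)\in I$, and $p>2$ finishes it. This is almost certainly the intended argument for (i).

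Part (ii), however, is not a proof but a declaration of intent, and the mechanism you describe does not close. You write that part (i) will be ``invoked at an intermediate stage where a skew--skew commutator product arises, converting $[z_a,z_b][z_c,z_d]$ into $z_a[z_c,z_d]z_b$'' --- but identity (ii) is multilinear in the three skew variables $z_1,z_2,z_3$ and the single symmetric variable $y_4$, so a product of two commutators each built from two skew letters would require four skew letters and can never appear; every two-commutator product you can reach from the Jacobi step has one factor of the form $[z_i,y_4]$, which is \emph{symmetric}, so part (i) never applies. The reorderings you list (Proposition~\ref{ident}(ii) to swap the two commutator factors, Proposition~\ref{ident}(iv) to swap the outer $z$'s around a symmetric commutator) only permute factors within each of the two shapes $[\,\cdot,\cdot\,][\,\cdot,\cdot\,]$ and $z_\bullet[\,\cdot,\cdot\,]z_\bullet$; none of them converts one shape into the other, which is precisely what (ii) asserts. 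Some genuinely new input is needed --- for instance a substitution of a composite skew element (such as $z_ay_4z_b-z_by_4z_a$ or $z_az_by_4-y_4z_bz_a$) into $s_3$ or into Proposition~\ref{ident}(iv)/(vi), in the spirit of your part (i) --- and your proposal never identifies it. The admission that ``the main obstacle will be tracking the many sign factors \dots so that everything telescopes'' is exactly the part of the argument that is missing, so as it stands part (ii) is unproved.
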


\begin{lemma}\label{relacoes4}
For all $n\geq 3$, the following polynomial belongs to $I$:
\begin{equation*}
f_n=z_1[z_3,z_2,x_4,\dots ,x_n]-z_2[z_3,z_1,x_4,\dots ,x_n]+z_3[z_2,z_1,x_4,\dots ,x_n].
\end{equation*}
\begin{proof}
The proof of this lemma is similar to the proof of  Lemma 5.11 in \cite{vinkossca}.
\end{proof}
\end{lemma}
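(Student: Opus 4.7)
The plan is to argue by induction on $n$, using Proposition \ref{ident}(i) and (ii), together with Lemma \ref{produtos2comut}, as the main tools.

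For the base case $n=3$, the inner commutators contain no $x_i$ variables and the antisymmetry $[z_i,z_j]=-[z_j,z_i]$ gives
\[
f_3 \;=\; z_1[z_3,z_2] - z_2[z_3,z_1] + z_3[z_2,z_1] \;=\; -s_3(z_1,z_2,z_3),
\]
which lies in $I$ by Proposition \ref{ident}(i).

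For the inductive step, I assume $f_n \in I$ and compute the commutator $[f_n,x_{n+1}]$, which also lies in $I$ since $I$ is a $T(*)$-ideal. Applying the Leibniz rule $[ab,c]=a[b,c]+[a,c]b$ to each summand of $f_n$ yields
\[
[f_n,x_{n+1}] \;=\; f_{n+1} + g,
\]
where
\[
g \;=\; [z_1,x_{n+1}][z_3,z_2,x_4,\dots,x_n] - [z_2,x_{n+1}][z_3,z_1,x_4,\dots,x_n] + [z_3,x_{n+1}][z_2,z_1,x_4,\dots,x_n].
\]
It thus remains to prove $g \in I$.

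For this, I apply Proposition \ref{ident}(ii) to each of the three summands of $g$ to interchange the two commutators. Since every $z_i$ has parity $0$, the sign produced is $(-1)^{|x_{n+1}|+|x_4|+\cdots+|x_n|}$, the same for all three terms, so modulo $I$ the polynomial $g$ equals, up to this common sign,
\[
[z_3,z_2,x_4,\dots,x_n][z_1,x_{n+1}] - [z_3,z_1,x_4,\dots,x_n][z_2,x_{n+1}] + [z_2,z_1,x_4,\dots,x_n][z_3,x_{n+1}].
\]
This is exactly the polynomial of Lemma \ref{produtos2comut} under the substitution $(x_4,x_3,x_2,x_1):=(z_3,z_2,z_1,x_{n+1})$ with inner variables $x_4,\dots,x_n$, where all the factors $(-1)^{|z_iz_j|}$ equal $1$. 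Hence $g\in I$ and therefore $f_{n+1}=[f_n,x_{n+1}]-g \in I$, closing the induction.

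The most delicate point, although routine, is the sign bookkeeping in the swap via Proposition \ref{ident}(ii); the calculation goes through cleanly precisely because all three summands of $g$ share the same outer variables $x_{n+1},x_4,\dots,x_n$ and each $z_i$ has parity $0$, which forces the swap to produce the same sign on each of the three terms.
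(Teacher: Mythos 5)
Your proof is correct and follows essentially the route that the paper delegates to \cite{vinkossca}, Lemma 5.11: induction on $n$ with base case $f_3=-s_3(z_1,z_2,z_3)$, and an inductive step that brackets $f_n$ with $x_{n+1}$ and kills the error term $g$ by swapping the two commutator factors via Proposition \ref{ident}(ii) (applied with the $*$-homogeneous higher commutator substituted for a variable of matching parity) and then invoking Lemma \ref{produtos2comut}. The sign bookkeeping checks out: all three summands acquire the common factor $(-1)^{|x_{n+1}|+|x_4|+\cdots+|x_n|}$, and the coefficients $(-1)^{|z_iz_j|}$ in Lemma \ref{produtos2comut} are all equal to $1$.
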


\begin{lemma} \label{relacoes5}
For all $m\geq 2$, the following polynomials are elements of $I$:
\begin{itemize}
\item[a)] $z_1z_2[y_1,y_2,\dots ,y_m]-z_2[y_2,z_1,y_1,y_3,\dots ,y_m]+z_2[y_1,z_1,y_2,\dots ,y_m]$.
\item[b)] $z_1z_2[y_1,z_3,y_2,\dots ,y_m]+z_2[y_1,z_3,z_1,y_2,\dots ,y_m]$.
\item[c)] $z_1z_2[z_3,z_4,y_1,\dots ,y_m]+z_2[z_3,z_4,z_1,y_1,\dots ,y_m]$.
\end{itemize}
\begin{proof} 
The proof of this lemma is similar to the proof of  Lemma 5.13, Lemma 5.14 and  Lemma 5.15 in \cite{vinkossca}.
\end{proof}
\end{lemma}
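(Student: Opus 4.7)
I would prove (a), (b) and (c) simultaneously by induction on $m$, mirroring the strategy of Lemmas 5.13--5.15 of \cite{vinkossca}.

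\emph{Base case $m=2$.} For (a), the Jacobi identity yields $[y_1,z_1,y_2]-[y_2,z_1,y_1]=[y_1,y_2,z_1]$, so the polynomial collapses to $z_1z_2[y_1,y_2]+z_2[y_1,y_2,z_1]$. Expanding the outer commutator and regrouping gives
\[
[z_1,z_2][y_1,y_2]+z_2[y_1,y_2]z_1.
\]
Lemma~\ref{relacoes3}(i) replaces the first summand by $z_1[y_1,y_2]z_2$ modulo $I$, and Proposition~\ref{ident}(iv) with $|y_1y_2|=0$ finishes the case. For (b), substitute the symmetric polynomial $u=[y_1,z_3]$ in place of $y_1$ in the $m=2$ case of (a); the outcome is the polynomial of (b) plus the leftover $-z_2[[y_2,z_1],[y_1,z_3]]$, which lies in $I$ because Proposition~\ref{ident}(ii) forces the length-$2$ commutators $[y_2,z_1]$ and $[y_1,z_3]$ to commute modulo $I$. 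For (c), I would use the standard identity $s_3\in I$ from Proposition~\ref{ident}(i), together with Jacobi and Lemma~\ref{relacoes3}(i), to rewrite the innermost triple $[z_3,z_4,z_1]$ and reduce the expression to sandwich terms $[\cdot,\cdot]\,z_k\,[\cdot,\cdot]$ that vanish by Proposition~\ref{ident}(v).

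\emph{Inductive step.} Let $P_m$ denote the polynomial in any one of (a), (b), (c), and assume $P_{m-1}\in I$. Writing each outer commutator $[\,\cdot\,,y_m]=(\,\cdot\,)y_m-y_m(\,\cdot\,)$, one obtains
\[
P_m = P_{m-1}y_m - R_{m-1},
\]
where $R_{m-1}$ is an explicit residual in which $y_m$ is sandwiched between the leading $z$-factors and a long commutator on the remaining variables. Moving $y_m$ back past the $z$'s via $y_mz_k=z_ky_m+[y_m,z_k]$ and repeatedly applying Lemmas~\ref{relacoes1}, \ref{relacoes2}, \ref{produtos2comut} and~\ref{relacoes3} (and, in case (c), also Lemma~\ref{relacoes4}) to rearrange nested commutators, each term of $R_{m-1}$ reshapes into a product $[\cdot,\cdot]\,z_k\,[\cdot,\cdot]$, which lies in $I$ by Proposition~\ref{ident}(v). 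Since $P_{m-1}y_m\in I$ by the induction hypothesis, this gives $P_m\in I$.

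\emph{Main obstacle.} The technical core is the inductive step: each term of the residual $R_{m-1}$ must be reshaped into the sandwich form $[\cdot,\cdot]\,z_k\,[\cdot,\cdot]$, which demands an iterative use of the reordering and commutator-exchange identities of Lemmas~\ref{relacoes1}--\ref{relacoes3}. The secondary difficulty is the careful bookkeeping of the signs $(-1)^{|\cdot|}$ produced by the involution parity, particularly when swapping $z$-factors and $y_m$ past length-$2$ commutators.
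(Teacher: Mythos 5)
The paper itself gives no argument here beyond citing Lemmas 5.13--5.15 of \cite{vinkossca}, so your plan has to stand on its own. Your base cases for (a) and (b) do: the Jacobi reduction of (a)$_{m=2}$ to $[z_1,z_2][y_1,y_2]+z_2[y_1,y_2]z_1$ and the subsequent use of Lemma \ref{relacoes3}(i) and Proposition \ref{ident}(iv) are correct, and the substitution $y_1\mapsto[y_1,z_3]$ for (b) is legitimate since $[y_1,z_3]$ is symmetric and the leftover $z_2[[y_2,z_1],[y_1,z_3]]$ lies in $I$ by Proposition \ref{ident}(ii). The base case of (c) is only gestured at and would need to be written out.

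The genuine gap is in the inductive step. Writing $P_m=P_{m-1}y_m-R_{m-1}$ and pushing $y_m$ leftward in $R_{m-1}$ does produce $y_mP_{m-1}$ (in $I$ by induction) plus correction terms, but these correction terms do \emph{not} individually reshape into sandwiches $[\cdot,\cdot]z_k[\cdot,\cdot]$. Carrying out your own recipe for part (a) gives
\[
R_{m-1}=y_mP_{m-1}+[z_1,y_m]z_2[y_1,\dots,y_{m-1}]-z_1[y_m,z_2][y_1,\dots,y_{m-1}]-[y_m,z_2][y_1,y_2,z_1,y_3,\dots,y_{m-1}],
\]
where only the second summand is a sandwich killed by Proposition \ref{ident}(v). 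The last two summands are, respectively, a $z$ multiplied on the \emph{outside} of a product of two commutators and a bare product of two commutators; neither lies in $I$ on its own, and showing that their sum does is a substantive claim of the same flavor as the lemma itself. It can be closed (e.g.\ for $m=3$ one combines Lemma \ref{relacoes1}(ii), Proposition \ref{ident}(v) and the identity $[y_m,z_2][y_1,y_2]z_1=[y_m,z_2][y_1,y_2,z_1]+[y_m,z_2]z_1[y_1,y_2]$), but for general $m$ this requires extending Lemma \ref{relacoes1}(ii) to products involving a long commutator and an argument that $z_1$ may be moved from the end of $[y_1,\dots,y_{m-1},z_1]$ to the third slot modulo $I$ --- none of which is supplied or reducible to a one-line appeal to the listed lemmas. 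As written, the assertion that ``each term of $R_{m-1}$ reshapes into a product $[\cdot,\cdot]z_k[\cdot,\cdot]$'' is false, so the induction does not close without this additional work.
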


Let $B$ be the subspace of $\mathbb{F}\langle Y,Z \rangle$ formed by all $Y$-proper polynomials.
Since $\mathbb{F}$ is an infinite field, it is known that  
$Id$ and $I$ are generated, as a T$(*)$-ideals, by its multihomogeneous elements in $B$.  
See \cite[Lemma 2.1]{drenskygiambruno} and \cite[Page 546]{vinkossca} for details. 

If $M=(m_1,\dots, m_k)$ and $N=(n_1,\dots ,n_s)$, denote by $B_{MN}$ 
the following multihomogeneous subspace of $B$:
\[B_{MN}=\{f(y_1,\ldots ,y_k,z_1,\ldots ,z_s) \in B : \ \deg_{y_i} f=m_i, \ \deg_{z_j}f=n_j, \ 1\leq i \leq k , \  1\leq j \leq s\}.\]
We shall prove that $I=Id$, that is, $I\cap B_{MN}= Id\cap B_{MN}$ for all $M,N$.

\begin{observation} \label{observacaoordemdasvariaveis}
Let $\sigma \in \mbox{Sym}(k)$ and $\rho \in \mbox{Sym}(s)$. Since
the T$(*)$-ideals generated by 
\[f(y_1,\ldots ,y_k,z_1,\ldots ,z_s) \ \ \mbox{and} \ \ f(y_{\sigma (1)},\ldots ,y_{\sigma (k)},z_{\rho (1)},\ldots ,z_{\rho (s)})\]
are equal it is sufficient to prove $I\cap B_{MN}= Id\cap B_{MN}$ for 
\begin{equation} \label{aaa}
1\leq m_1 \leq \ldots \leq m_k \ \ \mbox{and} \ \ 1\leq n_1 \leq \ldots \leq n_s. 
\end{equation}
From now on we assume (\ref{aaa}).
\end{observation}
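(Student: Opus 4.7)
The plan is to build, for each pair of permutations $(\sigma,\rho) \in \mathrm{Sym}(k) \times \mathrm{Sym}(s)$, a $*$-automorphism $\varphi_{\sigma,\rho}$ of $\mathbb{F}\langle Y \cup Z \rangle$ that simply relabels the free generators. Concretely, I would define $\varphi_{\sigma,\rho}$ as the $\mathbb{F}$-algebra endomorphism determined by $y_i \mapsto y_{\sigma(i)}$ for $i \leq k$, $z_j \mapsto z_{\rho(j)}$ for $j \leq s$, and $y_i \mapsto y_i$, $z_j \mapsto z_j$ otherwise. Because this map sends symmetric generators to symmetric generators and skew to skew, it commutes with $*$; since $\varphi_{\sigma^{-1},\rho^{-1}}$ is a two-sided inverse, $\varphi_{\sigma,\rho}$ is in fact a $*$-automorphism of the free algebra.

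Having $\varphi_{\sigma,\rho}$ at hand, I would invoke the defining closure property of a $T(*)$-ideal under every $*$-endomorphism. Applied simultaneously to $\varphi_{\sigma,\rho}$ and its inverse, this yields that any $T(*)$-ideal $J$ satisfies $\varphi_{\sigma,\rho}(J)=J$. Specializing to the $T(*)$-ideal generated by a single polynomial $f(y_1,\ldots,y_k,z_1,\ldots,z_s)$ gives that it coincides with the $T(*)$-ideal generated by $f(y_{\sigma(1)},\ldots,y_{\sigma(k)},z_{\rho(1)},\ldots,z_{\rho(s)})$, which is precisely the statement in the first sentence of the observation. Applying it instead to $Id$ and to $I$ shows that both are stable under $\varphi_{\sigma,\rho}$.

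To obtain the reduction to sorted multidegrees, I would choose $\sigma$ and $\rho$ as the permutations that sort $(m_1,\ldots,m_k)$ and $(n_1,\ldots,n_s)$ into nondecreasing order, and let $M',N'$ denote the resulting sorted tuples. The automorphism $\varphi_{\sigma,\rho}$ then restricts to an $\mathbb{F}$-linear bijection $B_{MN} \to B_{M'N'}$ (because the multidegree is permuted accordingly), and by the previous step it carries $I \cap B_{MN}$ onto $I \cap B_{M'N'}$ and $Id \cap B_{MN}$ onto $Id \cap B_{M'N'}$. Consequently, the equality $I \cap B_{MN} = Id \cap B_{MN}$ is equivalent to the same equality for the sorted pair $(M',N')$, justifying the assumption (\ref{aaa}).

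I expect no real obstacle here beyond the bookkeeping: the only point that requires any care is verifying that a permutation of variables genuinely gives a $*$-automorphism, and this reduces to the fact that $\sigma$ and $\rho$ act within $Y$ and within $Z$ separately and never mix the symmetric with the skew-symmetric generators.
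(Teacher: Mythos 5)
Your argument is correct and is essentially the paper's own reasoning: the paper simply asserts that $f(y_1,\ldots ,y_k,z_1,\ldots ,z_s)$ and $f(y_{\sigma(1)},\ldots ,y_{\sigma(k)},z_{\rho(1)},\ldots ,z_{\rho(s)})$ generate the same T$(*)$-ideal, and you have supplied the standard justification via the relabeling $*$-automorphism and its restriction to a bijection $B_{MN}\to B_{M'N'}$ respecting $I$ and $Id$. Nothing further is needed.
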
 
 
Denote 
\[B_{MN}(I)=B_{MN}/I \cap B_{MN}  \ \mbox{and} \ B_{MN}(Id)=B_{MN}/ Id \cap B_{MN} .\]
When  $m_1= \dots = m_k=n_1 = \dots = n_s=1$, we write $B_{MN}=\Gamma_{ks}$.

Suppose $m_1, \dots , m_k \geq 1$ and $n_1 , \dots , n_s \geq 1$. Write $m_1+ \dots + m_k=m$ and 
$n_1 + \dots + n_s =n$. Let $\varphi_{MN} : \Gamma_{mn}(I) \rightarrow B_{MN}(I)$ be the linear map defined by
\[\varphi_{MN}(f(y_1,\ldots ,y_m,z_1, \ldots , z_n)+I\cap \Gamma_{mn})=\] 
\[f(\underbrace{y_1,\ldots ,y_1}_{m_1},\ldots ,\underbrace{y_k,\ldots , y_k}_{m_k},\underbrace{ z_1, \ldots , z_1}_{n_1}
, \ldots , \underbrace{z_s, \ldots , z_s}_{n_s})+I\cap B_{MN}.\]
Since $\varphi_{MN}$ is onto, we have the following proposition:
\begin{proposition}\label{relacaodegammaeb}
Consider the above notations. If the vector space  $\Gamma_{mn}(I)$ is spanned by a subset $S$, then $B_{MN}(I)$ is spanned by $\varphi_{MN}(S)$. 
\end{proposition}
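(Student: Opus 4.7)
The plan is to verify that the map $\varphi_{MN}$ is both well-defined and surjective; once this is established the proposition is immediate, since a surjective linear map carries any spanning set of its domain onto a spanning set of its codomain.

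For well-definedness, I would argue that if $f(y_1,\dots,y_m,z_1,\dots,z_n) \in I \cap \Gamma_{mn}$, then the polynomial
\[
f(\underbrace{y_1,\dots,y_1}_{m_1},\dots,\underbrace{y_k,\dots,y_k}_{m_k},
\underbrace{z_1,\dots,z_1}_{n_1},\dots,\underbrace{z_s,\dots,z_s}_{n_s})
\]
still belongs to $I$. This uses only that $I$ is a $T(*)$-ideal: the identification map on the free algebra $\mathbb{F}\langle Y\cup Z\rangle$ that sends the symmetric variables $y_1,\dots,y_{m_1}$ all to $y_1$, the symmetric variables $y_{m_1+1},\dots,y_{m_1+m_2}$ all to $y_2$, and so on, is an $\mathbb{F}$-algebra endomorphism commuting with $*$ (it sends symmetric variables to symmetric ones and skew to skew), so it preserves $I$. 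The substituted polynomial is clearly multihomogeneous of the prescribed multidegrees, hence lies in $I \cap B_{MN}$.

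For surjectivity, I would work at the level of monomials. Take any multihomogeneous monomial $w \in B_{MN}$; by definition it contains exactly $m_i$ occurrences of $y_i$ and $n_j$ occurrences of $z_j$. Reading $w$ from left to right, replace the $m_1$ occurrences of $y_1$ by $y_1,\dots,y_{m_1}$ (in order), the $m_2$ occurrences of $y_2$ by $y_{m_1+1},\dots,y_{m_1+m_2}$, and so on; do the same for the $z$-variables. The resulting monomial $w'$ lies in $\Gamma_{mn}$ (it is multilinear in the $m+n$ variables $y_1,\dots,y_m,z_1,\dots,z_n$), and by construction $\varphi_{MN}(w' + I\cap \Gamma_{mn}) = w + I \cap B_{MN}$. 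Extending linearly, every class in $B_{MN}(I)$ lies in the image of $\varphi_{MN}$.

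Combining, if $S$ spans $\Gamma_{mn}(I)$ then
\[
B_{MN}(I) = \varphi_{MN}(\Gamma_{mn}(I)) = \varphi_{MN}(\mathrm{span}\,S) = \mathrm{span}\,\varphi_{MN}(S),
\]
which is the assertion. There is no real obstacle here: both parts are routine, the only subtlety being the T$(*)$-ideal check for well-definedness, which is why the authors insist on partitioning the $y$'s separately from the $z$'s in the substitution pattern so that parity is preserved.
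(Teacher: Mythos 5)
Your proposal is correct and follows the same route as the paper, which simply records that $\varphi_{MN}$ is onto and deduces the statement; you merely make explicit the two routine checks (well-definedness via the $T(*)$-ideal property and surjectivity via lifting by renaming repeated variables). The only cosmetic imprecision is that $B_{MN}$ is spanned not by ordinary monomials but by products of $z$-variables and commutators ($Y$-proper elements), though the same lifting argument applies verbatim to those spanning elements.
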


Fix the following order on $Y\cup Z$ :
\[z_1< z_2 < \ldots < y_1< y_2 < \ldots  \]

\begin{definition}
Let $S_1$ be the set of all polynomials
\[f=z_{i_1}\dots z_{i_t}[x_{j_1},\dots ,x_{j_l}][x_{k_1},\dots ,x_{k_q}]\]
where $t, \ l, \ q \geq 0$,    $l \neq 1$, $ q \neq 1$,     $z_{i_1} \leq \ldots \leq z_{i_t}$,   $x_{j_1} > x_{j_2} \leq \ldots \leq x_{j_l}$ and  
$x_{k_1} > x_{k_2} \leq \ldots \leq x_{k_q}$.   We say that $f$ is an $S_1$-standard polynomial. 
\end{definition}

\begin{definition}
Let $S_2 \subset S_1$ be the set of all polynomials
\[f=z_{i_1}\dots z_{i_t}[x_{j_1},\dots ,x_{j_l}][x_{k_1},\dots ,x_{k_q}] \in S_1\]
such that: if $l\geq 2$ then $q=0$ or $q=2$, and when $q=2$ we have
that $x_{j_1}\geq x_{k_1}$ and $x_{j_2}\geq x_{k_2}$. If $f\in S_2$ we say that $f$ is an $S_2$-standard polynomial.
\end{definition} 

\begin{proposition}\label{geradoresdebmni}
The vector space $B_{MN}(I)$ is spanned by the set of all elements $f+I \cap B_{MN}$ where $f \in B_{MN}$ is $S_2$-standard. 
\begin{proof}
This proposition is true for $\Gamma_{mn}(I)$. 
In fact, we can use the same proof as in \cite[Proposition 5.8]{vinkossca}. Now, 
if $x_i < x_j$ then $\varphi_{MN}(x_i) \leq  \varphi_{MN}(x_j)$.
Thus, by Proposition \ref{relacaodegammaeb}, the general case is proved. 
\end{proof}
\end{proposition}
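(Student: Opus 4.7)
My plan is to establish the spanning statement first in the multilinear setting $\Gamma_{mn}(I)$ and then transfer it to the general multihomogeneous case $B_{MN}(I)$ via Proposition \ref{relacaodegammaeb}. This two-step strategy mirrors the argument of \cite[Proposition 5.8]{vinkossca} used for the $n=2$ case, and the bulk of the work lies in the multilinear step.

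For the multilinear case, I would start with an arbitrary proper monomial and normalize it modulo $I$ through a sequence of reductions. First, Lemma \ref{relacoes1}(i) annihilates any monomial containing three or more commutator factors, so I may assume at most two commutators. Next, Lemma \ref{relacoes1}(ii) together with the identities (iv) and (vi) of Proposition \ref{ident} allow me to shuffle every free variable appearing to the right of the commutators over to the left (at the cost of signs), yielding monomials of the shape $x_{i_1}\cdots x_{i_t}[\cdots][\cdots]$. In a proper monomial the leading block contains only skew variables, and Lemma \ref{relacoes2} lets me reorder these $z$'s in non-decreasing index. Inside each commutator $[x_{j_1},x_{j_2},x_{j_3},\ldots,x_{j_l}]$, Lemma \ref{relacoes2}(a)--(c) sorts the tail so that $x_{j_3} \leq \cdots \leq x_{j_l}$, and Lemma \ref{produtos2comut} combined with the Jacobi-type identity (iii) of Proposition \ref{ident} brings the leading pair into the required form $x_{j_1} > x_{j_2}$, giving $S_1$-standard form. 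Finally, Lemma \ref{relacoes3}(i) and Lemmas \ref{relacoes4}, \ref{relacoes5} enforce the extra $S_2$ conditions on the second commutator when the first one is long.

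Once $\Gamma_{mn}(I)$ is known to be spanned by its $S_2$-standard elements, Proposition \ref{relacaodegammaeb} gives that $B_{MN}(I)$ is spanned by their images under $\varphi_{MN}$. The crucial observation is that if $x_i < x_j$ in the multilinear ordering, then $\varphi_{MN}(x_i) \leq \varphi_{MN}(x_j)$ after the substitution. Hence every non-strict inequality in the $S_2$-standard definition is preserved by $\varphi_{MN}$; only the strict inequality $x_{j_1} > x_{j_2}$ on the leading pair of a commutator can degenerate to equality, and in that case the commutator has a repeated entry so the image is $0$. Therefore each $\varphi_{MN}(f)$ is either $S_2$-standard in $B_{MN}$ or zero, and the proposition follows.

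The main obstacle is not the transfer step but the multilinear normalization: the many identities generating $I$ produce competing rewrite rules, and one must pick an order of application that terminates without undoing earlier steps (in particular, the $z$-reordering must not reintroduce $y$'s to the right of commutators, and the inner-commutator sorting must respect the outer positioning). This combinatorial bookkeeping is exactly the technical content of \cite[Proposition 5.8]{vinkossca}, and since the present lemmas match those of \cite{vinkossca} one-to-one, the argument transfers with only cosmetic changes.
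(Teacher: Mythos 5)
Your proposal follows the paper's proof exactly: establish the spanning result in the multilinear space $\Gamma_{mn}(I)$ by the normalization argument of \cite[Proposition 5.8]{vinkossca}, then transfer it to $B_{MN}(I)$ via Proposition \ref{relacaodegammaeb} using the fact that $x_i< x_j$ implies $\varphi_{MN}(x_i)\leq\varphi_{MN}(x_j)$. Your extra observation that a strict inequality degenerating to an equality under $\varphi_{MN}$ makes the corresponding commutator, and hence the whole monomial, vanish is correct and makes explicit a point the paper leaves implicit.
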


\begin{observation}\label{ordemnasvariaveis}
Let $F[y_{ij}^k , z_{ij}^k]=F[y_{ij}^k , z_{ij}^k \ : \ i,j,k \geq 1]$ 
be the free commutative algebra freely generated by the set of variables $L=\{y_{ij}^k , z_{ij}^k\ : \ i,j,k \geq 1\}$. Given an order $>$ on $L$, consider the order on the monomials of $F[y_{ij}^k,\ z_{ij}^k]$ 
induced by $>$ as follows: if $w_1 \geq w_2 \geq \ldots \geq  w_n , \  w_1' \geq w_2' \geq \ldots \geq w_m'$  are in $L$ then
\[w_1w_2 \ldots w_n > w_1'w_2' \ldots w_m'\]
if and only if 
\begin{itemize}
\item either $w_1=w_1', \ \ldots , w_l=w_l', \ w_{l+1}>w_{l+1}'$ for some $l$,
\item or $w_1=w_1', \  \ldots , w_m=w_m'$ and $n>m$.
\end{itemize}
 Given
$f\in F[y_{ij}^k,  z_{ij}^k]$, we denote by $m(f)$ its leading monomial. 

In $UT_3(F[y_{ij}^k,  z_{ij}^k])$ consider the \textit{qgeneric matrices} 
\[
Z_k=\left[
\begin{array}{ccc}
z_{11}^k & z_{12}^k & 0\\
0 & 0 & -z_{12}^k\\
0 & 0 & -z_{11}^k
\end{array}
\right] \ \ \mbox{and} \ \ 
Y_k=\left[
\begin{array}{ccc}
y_{11}^k & y_{12}^k & y_{13}^k\\
0 & 0 & y_{12}^k\\
0 & 0 & y_{11}^k
\end{array}
\right].
\]
Note, the $(2,2)$-entry of $Y_k$ is $0$.
\end{observation}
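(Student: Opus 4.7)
The Observation is a bundle of definitions rather than a propositional claim, so the work is to verify the implicit assertions: that the relation on monomials is a well-defined total order, and that $Y_k$ and $Z_k$ lie in $UT_3^+$ and $UT_3^-$ respectively (tacit in calling them ``qgeneric'' with respect to $*$). The plan is to dispatch each in turn.

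For the order, the first point is well-definedness: since $>$ is a linear order on $L$, every monomial in $F[y_{ij}^k, z_{ij}^k]$ admits a unique presentation $w_1w_2\cdots w_n$ with $w_1\geq w_2\geq \cdots \geq w_n$, so the comparison rule is unambiguous. The defining clauses are precisely those of the length-lexicographic (``shortlex'') order on finite words over the totally ordered alphabet $(L,>)$: scan letter by letter from the left and break ties in favour of the longer word. Transitivity, antisymmetry and trichotomy then follow by a short induction on the length of the common prefix, and I would record this explicitly rather than appealing to ``it is well-known'', since the leading-monomial function $m(\cdot)$ will be invoked repeatedly later.

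For the matrices, I would apply the formula $A^*=JA^tJ$, which for a $3\times 3$ matrix simply sends the $(i,j)$-entry to the $(4-j,4-i)$-entry. Reading entries off gives
\[
Y_k^* = \left[\begin{array}{ccc} (Y_k)_{33} & (Y_k)_{23} & (Y_k)_{13} \\ 0 & (Y_k)_{22} & (Y_k)_{12} \\ 0 & 0 & (Y_k)_{11} \end{array}\right] = Y_k,
\]
and the analogous entry-swap yields $Z_k^* = -Z_k$. The vanishing of the $(2,2)$-entry of $Y_k$ that the Observation explicitly flags is essential here: the involution fixes this entry, so any nonzero generic value placed there would survive, and setting it to $0$ makes $Y_k$ a reduced form of a generic symmetric element. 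There is no genuine obstacle in any of this; the only point worth flagging for the reader is that $Y_k$ is not fully generic within $UT_3^+$ (it lacks an independent coefficient on $e_{22}$, and its $(1,2)$ and $(2,3)$ entries are tied together), so the subsequent arguments will have to either adjoin a separate generic matrix for the $e_{22}$ direction or demonstrate that this reduced form is already rich enough to detect all $*$-identities under consideration.
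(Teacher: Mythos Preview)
The paper offers no proof for this Observation at all; it is treated as a block of notation and definitions, so your decision to spell out the implicit well-definedness claims goes beyond what the paper itself does, and your verifications are essentially correct.

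Two small points to tighten. First, the order you describe is not shortlex: shortlex compares by length first and uses lexicographic comparison only as a tiebreaker, whereas the paper's order compares lexicographically from the left and only invokes length when one sorted word is a prefix of the other. Your informal description (``scan letter by letter from the left and break ties in favour of the longer word'') is accurate, but the name is not. Second, the equality of the $(1,2)$- and $(2,3)$-entries of $Y_k$ is not a reduction: it is forced by $Y_k^*=Y_k$, since the involution swaps those positions. The only genuine specialisation in $Y_k$ relative to a fully generic element of $UT_3^+$ is the vanishing $(2,2)$-entry, which is exactly what the paper flags and what Lemma~\ref{lemamatrizesgenericas} then justifies.
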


By using an analogous argument to the  \cite[Lemma 6.1]{vinkossca} we obtain the next lemma.

\begin{lemma}\label{lemamatrizesgenericas}
 If $f(y_1,\ldots , y_k,z_1, \ldots , z_s) \in Id$, then $f(Y_1,\ldots , Y_k,Z_1, \ldots , Z_s) =0$.
\end{lemma}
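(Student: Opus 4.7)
The plan is to prove this by a scalar-extension argument: view $Y_k$ and $Z_k$ as specific symmetric and skew-symmetric elements of $UT_3(K)$, where $K = \mathbb{F}[y_{ij}^k, z_{ij}^k]$ is the commutative polynomial ring from Observation \ref{ordemnasvariaveis}, and then lift the $*$-identity $f$ from $UT_3(\mathbb{F})$ to $UT_3(K)$.

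First I would extend $*$ to $UT_3(K)$ by applying formula (\ref{involucaotresportres}) entrywise; equivalently, this is the involution $* \otimes \mathrm{id}_K$ on $UT_3(K) \cong UT_3(\mathbb{F}) \otimes_{\mathbb{F}} K$. A direct computation from the matrix form given in Observation \ref{ordemnasvariaveis} shows that $Y_k^* = Y_k$ and $Z_k^* = -Z_k$, so $Y_k \in UT_3(K)^+$ and $Z_k \in UT_3(K)^-$. The fact that the $(2,2)$-entry of $Y_k$ is forced to be $0$ rather than a free parameter is simply a legitimate specialization of the generic symmetric matrix and does not interfere with $Y_k$ being symmetric.

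The core step is then to show that $f \in Id$ implies $f$ vanishes on all symmetric/skew-symmetric substitutions from $UT_3(K)$, not merely from $UT_3(\mathbb{F})$. This is where the hypothesis that $\mathbb{F}$ is infinite is essential, and the argument mirrors that of \cite[Lemma 6.1]{vinkossca}: since $\mathbb{F}$ is infinite, $Id$ is generated by its multihomogeneous elements, and a multihomogeneous $*$-identity over an infinite field extends to $UT_3(\mathbb{F}) \otimes_{\mathbb{F}} K$ by writing each $UT_3(K)^{\pm}$-argument as a $K$-linear combination of a fixed $\mathbb{F}$-basis of $UT_3(\mathbb{F})^{\pm}$, expanding multihomogeneously in each $K$-scalar, and then invoking a Vandermonde-style argument (using infiniteness of $\mathbb{F}$) on the resulting scalar coefficients to reduce everything back to the original identity on $UT_3(\mathbb{F})$. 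Applying this with $y_i \mapsto Y_i$ and $z_j \mapsto Z_j$ yields $f(Y_1,\ldots,Y_k,Z_1,\ldots,Z_s) = 0$.

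The main obstacle is the characteristic-$p$ subtlety in the lifting step: one cannot simply multilinearize as in characteristic zero, so the argument must stay at the multihomogeneous level and use the Vandermonde trick on the ground field. Once that principle is in hand, the verification that $Y_k$ and $Z_k$ have the required symmetry types is immediate from their explicit form, and the conclusion is a direct substitution.
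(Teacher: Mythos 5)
Your proposal is correct and follows essentially the same route the paper intends: the paper's one-line proof defers to the analogous Lemma 6.1 of Di Vincenzo--Koshlukov--La Scala, which is exactly this scalar-extension/generic-matrix argument. Your verification that $Y_k\in UT_3(K)^+$ and $Z_k\in UT_3(K)^-$ under the entrywise extension of $*$, together with the standard fact that over an infinite field a polynomial identity (here, restricted to $UT_3(\mathbb{F})^{\pm}\otimes_{\mathbb{F}}K$ via a fixed basis of $UT_3(\mathbb{F})^{\pm}$ and the vanishing of a polynomial map on all $\mathbb{F}$-points) passes to the commutative scalar extension $K=\mathbb{F}[y_{ij}^k,z_{ij}^k]$, is precisely what is needed.
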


\subsection{Subspaces $B_{MN}$ where $N=(0)$}

If $M=(m_1,\dots, m_k)$ and 
$N=(0)$, then denote $B_{MN}=B_{M0}$, that is
\[B_{M0}=\{f(y_1,\ldots,y_k) \in B \ : \ \deg_{y_i} f=m_i, \ 1\leq i \leq k \}.\]

A polynomial in $S_2\cap B_{M0}$ has the form
\begin{equation}\label{aa11}
f^{(i_1)}=[y_{i_1},y_{i_2},\ldots,y_{i_m}] \ \mbox{or} \ f^{(i_1,j_1)}=[y_{i_1},y_{i_2},\ldots,y_{i_{m-2}}][[y_{j_1},y_{j_2}]
\end{equation}
where $i_1>i_2\leq i_3 \leq \ldots \leq i_m$ for $f^{(i_1)}$;  $i_1>i_2\leq i_3 \leq \ldots \leq i_{m-2},$ \ $i_1\geq j_1 >j_2$ and $i_2\geq j_2$ for $ f^{(i_1,j_1)}$. 

\begin{proposition}\label{propositionmain1}
The set $\{f+Id\cap B_{M0} \ : \ f\in S_2\cap B_{M0}\}$ is a basis for the vector space $B_{M0}(Id)$.
In particular, $Id\cap B_{M0}=I \cap B_{M0}$.
\begin{proof}
Since $I\subseteq Id$ we have, by Proposition \ref{geradoresdebmni}, 
\[B_{M0}(Id)=\mbox{span} \{f+Id\cap B_{M0} \ : \ f\in S_2\cap B_{M0}\}.\]

We will use the notations of Observation \ref{ordemnasvariaveis}. 
Consider some order $>$ on $L$ such that 
\[y_{12}^{i+1} > y_{12}^{i} >y_{11}^{i+1} > y_{11}^{i} \]
for all $i\geq 1$. By using the qgeneric matrices we have the following equalities:
\begin{itemize}
\item[(a)]  $ \displaystyle [Y_{1},Y_{2},\ldots , Y_{2l}]=(y_{11}^1y_{12}^2-y_{11}^2y_{12}^1)\left(\prod_{s=3}^{2l} y_{11}^{s}\right)(e_{12}-e_{23})$,

\item[(b)] $\displaystyle [Y_{1},Y_{2},\ldots , Y_{2l+1}]=-(y_{11}^1y_{12}^2-y_{11}^2y_{12}^1)\left(\prod_{s=3}^{2l} y_{11}^{s}\right)(
y_{11}^{2l+1}(e_{12}+e_{23})-2y_{12}^{2l+1}e_{13})$, 

\item[(c)] $\displaystyle [Y_{1},Y_{2},\ldots , Y_{2l-2}][Y_{2l-1}, Y_{2l}]=(y_{11}^1y_{12}^2-y_{11}^2y_{12}^1)\left(\prod_{s=3}^{2l-2} y_{11}^{s}\right)(y_{11}^{2l-1}y_{12}^{2l}-y_{11}^{2l}y_{12}^{2l-1})e_{13}$, 

\item[(d)]  $\displaystyle [Y_{1},Y_{2},\ldots , Y_{2l-1}][Y_{2l}, Y_{2l+1}]=-(y_{11}^1y_{12}^2-y_{11}^2y_{12}^1)\left(\prod_{s=3}^{2l-1}
 y_{11}^{s}\right)
(y_{11}^{2l}y_{12}^{2l+1}-y_{11}^{2l+1}y_{12}^{2l})e_{13}.$
\end{itemize}

Let $f^{(i_1)}$, $f^{(i_1,j_1)}$ as in (\ref{aa11}). 
 Write
\[f^{(i_1)}(Y_1,\ldots , Y_k)=\sum f^{(i_1)}_{ij}e_{ij} \ \mbox{and} \ 
f^{(i_1,j_1)}(Y_1,\ldots , Y_k)=\sum f^{(i_1,j_1)}_{ij}e_{ij}.
\]

We shall prove that $\{f+Id\cap B_{M0} \ : \ f\in S_2\cap B_{M0}\}$ is a linearly independent set of $B_{M0}(Id)$.
Suppose 
\[ \sum \alpha_{i_1}f^{(i_1)}+\sum \alpha_{i_1,j_1}f^{(i_1,j_1)} \in Id,
\]
where $\alpha_{i_1} , \alpha_{i_1,j_1} \in \mathbb{F}$.
By Lemma \ref{lemamatrizesgenericas} we have 
\[ \sum \alpha_{i_1}f^{(i_1)}(Y_1,\ldots , Y_k)+\sum \alpha_{i_1,j_1}f^{(i_1,j_1)}(Y_1,\ldots , Y_k)=0
.\]
The leading monomials of $f^{(i_1)}_{12}$ and $f^{(i_1,j_1)}_{13}$ are
\[m(f^{(i_1)}_{12})=y_{12}^{i_1}\left(\prod_{s=2}^{m} y_{11}^{i_s}\right) \ \mbox{and} \ 
m(f^{(i_1,j_1)}_{13})=y_{12}^{i_1}y_{12}^{j_1}\left(\prod_{s=2}^{m-2} y_{11}^{i_s}\right)y_{11}^{j_2}.
\]
Moreover, the coefficients of $m(f^{(i_1)}_{12})$ and $m(f^{(i_1,j_1)}_{13})$ 
in $f^{(i_1)}_{12}$ and $f^{(i_1,j_1)}_{13}$
are $\alpha_{i_1}$ and $\alpha_{i_1,j_1}$ respectively.

Since $f^{(i_1,j_1)}_{12}=0$, we have $ \sum \alpha_{i_1}f^{(i_1)}_{12}=0$. Thus, the coefficient of the maximal monomial 
of the set $\{m(f^{(i_1)}_{12}) \ : \ i_1 \geq 1\}$ is $0$. By induction, every coefficient $\alpha_{i_1}$ is $0$. This implies 
$ \sum \alpha_{i_1,j_1}f^{(i_1j_1)}_{13}=0$ and we can use similar argument to prove that every $\alpha_{i_1,j_1}$ is $0$. 
\end{proof}
\end{proposition}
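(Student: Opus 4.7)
The spanning half is immediate: $I \subseteq Id$ by Proposition~\ref{ident}, so Proposition~\ref{geradoresdebmni} already gives $B_{M0}(Id) = \mathrm{span}\{f + Id \cap B_{M0} \ : \ f \in S_2 \cap B_{M0}\}$. The real work is to prove linear independence modulo $Id$, after which $Id \cap B_{M0} = I \cap B_{M0}$ follows at once.

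The plan is to evaluate the two shapes in~(\ref{aa11}) on the qgeneric matrices $Y_k$ of Observation~\ref{ordemnasvariaveis} and read off the resulting entries in $F[y_{ij}^k]$. A short induction on the length $r$ of the bracket shows that $[Y_1, \ldots, Y_r]$ carries the factor $y_{11}^1 y_{12}^2 - y_{11}^2 y_{12}^1$ produced by the innermost commutator, multiplied by $\prod_{s \geq 3} y_{11}^s$, and sits in $\mathrm{span}(e_{12} - e_{23})$ or in $\mathrm{span}(e_{12} + e_{23}, e_{13})$ according to the parity of $r$. A product of two such nested brackets is the product of two strictly upper triangular matrices, hence lands in $\mathrm{span}(e_{13})$, picking up a second factor $y_{11}^{r-1} y_{12}^r - y_{11}^r y_{12}^{r-1}$ from the right bracket. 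These four closed formulas are the computational engine of the argument.

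Now suppose $\sum \alpha_{i_1} f^{(i_1)} + \sum \alpha_{i_1,j_1} f^{(i_1,j_1)} \in Id$. By Lemma~\ref{lemamatrizesgenericas}, the substitution of the $Y_k$ vanishes in every matrix entry. Since $f^{(i_1,j_1)}$ only contributes to the $(1,3)$-entry, the $(1,2)$-entry equation reads $\sum \alpha_{i_1} f^{(i_1)}_{12} = 0$ and involves only the first family. Fixing any monomial order with $y_{12}^{i+1} > y_{12}^i > y_{11}^{i+1} > y_{11}^i$, the leading monomial of $f^{(i_1)}_{12}$ is $y_{12}^{i_1} \prod_{s \geq 2} y_{11}^{i_s}$ with coefficient $\pm \alpha_{i_1}$; the $S_2$-standard constraint $i_1 > i_2 \leq i_3 \leq \ldots \leq i_m$ makes these leading monomials pairwise distinct across multi-indices, so a top-down cancellation forces every $\alpha_{i_1}$ to vanish. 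Plugging this back, the $(1,3)$-entry equation becomes $\sum \alpha_{i_1,j_1} f^{(i_1,j_1)}_{13} = 0$, and the analogous argument applied to the leading monomial $y_{12}^{i_1} y_{12}^{j_1} y_{11}^{j_2} \prod_{s \geq 2} y_{11}^{i_s}$ kills every $\alpha_{i_1,j_1}$.

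The main obstacle I anticipate is the combinatorial check that the leading monomials really are pairwise distinct within each family: this is precisely what the $S_2$-standard inequalities $i_1 \geq j_1 > j_2$ and $i_2 \geq j_2$ are designed to guarantee, since they allow one to recover $i_1$ as the max of $\{i_1, j_1\}$ and $j_2$ as the min of $\{j_2, i_2, \ldots, i_{m-2}\}$, after which the remaining sorted sequence is forced. Once formulas (a)--(d) are pinned down, the rest is a two-stage leading-coefficient cascade.
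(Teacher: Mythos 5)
Your proposal is correct and follows essentially the same route as the paper: spanning via Proposition \ref{geradoresdebmni}, evaluation on the qgeneric matrices, the four closed formulas for nested brackets and their products, and the two-stage leading-monomial cascade (first the $(1,2)$-entry to kill the $\alpha_{i_1}$, then the $(1,3)$-entry to kill the $\alpha_{i_1,j_1}$) under the order $y_{12}^{i+1} > y_{12}^{i} > y_{11}^{i+1} > y_{11}^{i}$. The only additions are your explicit remarks on why the product of two brackets lands in $\mathrm{span}(e_{13})$ and why the leading monomials are pairwise distinct, both of which are consistent with, and implicit in, the paper's argument.
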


\subsection{Subspaces $B_{MN}$ where $M=(0)$}

If 
$M=(0)$ and $N=(n_1,\dots ,n_s)$, then denote $B_{MN}=B_{0N}$, that is
\[B_{0N}=\{f(z_1,\ldots,z_s) \in B \ : \ \deg_{z_i} f=n_i, \ 1\leq i \leq s \}.\]

\begin{definition}
Let $S_3 \subset S_2$ be the set of all polynomials $f,f^{(j_1)},f^{(i_1,j_1)} \in B_{0N}$ such that:
\begin{align}
& \bullet f=z_1^{n_1}\dots z_s^{n_s} \in S_2,  \nonumber \\ 
& \bullet f^{(j_1)}=[z_{j_1},z_{j_2},\dots ,z_{j_n}] \in S_2, \label{caradospolf}\\
& \bullet f^{(i_1,j_1)}=z_{i_1}[z_{j_1},\dots ,z_{j_{n-1}}] \in S_2 \ \mbox{ where } i_1 \leq j_1. \nonumber
\end{align} 
If $f\in S_3$ we say that $f$ is an $S_3$-standard polynomial.
\end{definition} 

\begin{proposition}\label{geradoresdebzeroni}
The vector space $B_{0N}(I)$ is spanned by the set of all elements $f+I \cap B_{0N}$ where $f \in B_{0N}$ is $S_3$-standard. 
\begin{proof}
This proposition is true for $\Gamma_{0n}(I)$. 
In fact, we can use the same proof as in \cite[Proposition 5.12]{vinkossca}. Now, 
if $z_i < z_j$ then $\varphi_{0N}(z_i) \leq  \varphi_{0N}(z_j)$.
Thus, by Proposition \ref{relacaodegammaeb}, the general case is proved. 
\end{proof}
\end{proposition}

\begin{proposition} \label{propositionmain2}
The set $\{f+Id\cap B_{0N} \ : \ f\in S_3\}$ is a basis for the vector space $B_{0N}(Id)$.
In particular, $Id\cap B_{0N}=I \cap B_{0N}$.
\begin{proof}
Since $I\subseteq Id$ we have, by Proposition \ref{geradoresdebzeroni}, 
\[B_{0N}(Id)=\mbox{span} \{f+Id\cap B_{0N} \ : \ f\in S_3\}.\]

We will use the notations of Observation \ref{ordemnasvariaveis}. 
Consider some order $>$ on $L$ such that
\[z_{12}^{i+1} > z_{12}^{i} >z_{11}^{i+1} > z_{11}^{i} \]
for all $i\geq 1$.  
By using the qgeneric matrices we have the following equalities:
\begin{itemize}
\item[(a)] $ \displaystyle [Z_1,Z_2,\dots ,Z_n]= (-1)^n(z_{11}^1z_{12}^2-z_{12}^1z_{11}^2)\left(\prod_{s=3}^{n}
 z_{11}^s\right)(e_{12}-e_{23})$,

\item[(b)] $ \displaystyle
Z_1[Z_2,Z_3,\dots ,Z_n]= (-1)^{n-1}(z_{11}^2z_{12}^3-z_{12}^2z_{11}^3)\left(\prod_{s=4}^n
 z_{11}^s\right)(z_{11}^1e_{12}-z_{12}^1e_{13})$.
\end{itemize}

Let $f, \ f^{(j_1)}$ and $f^{(i_1,j_1)}$ as in $(\ref{caradospolf})$. 
Write $f(Z_1,\ldots , Z_s)=\sum f_{ij}e_{ij}$,
\[f^{(j_1)}(Z_1,\ldots , Z_s)=\sum f^{(j_1)}_{ij}e_{ij} \ \mbox{and} \ 
f^{(i_1,j_1)}(Z_1,\ldots , Z_s)=\sum f^{(i_1,j_1)}_{ij}e_{ij}.
\]

Suppose
\[ \alpha f+\sum \alpha_{j_1}f^{(j_1)}+\sum \alpha_{i_1,j_1}f^{(i_1,j_1)} \in Id,
\]
where $\alpha, \alpha_{j_1}, \alpha_{i_1,j_1} \in \mathbb{F}$. By Lemma \ref{lemamatrizesgenericas} we have 
\[ \alpha f(Z_1,\ldots , Z_s)+\sum \alpha_{j_1}f^{(j_1)}(Z_1,\ldots , Z_s)+\sum \alpha_{i_1,j_1}f^{(i_1,j_1)}(Z_1,\ldots , Z_s)=0.\]

Since $f^{(j_1)}_{11}=f^{(i_1,j_1)}_{11}=0$, we obtain $\alpha f_{11}=0$ and so $\alpha =0$. 
Note that 
\[m(f^{(j_1)}_{23})=z_{12}^{j_1}\left(\prod_{l=2}^{n} z_{11}^{j_l}\right) \ \mbox{and} \ 
m(f^{(i_1,j_1)}_{13})=z_{12}^{j_1}z_{12}^{i_1} \left(\prod_{l=2}^{n-1} z_{11}^{j_l}\right).
\]
Moreover, the coefficients of $m(f^{(j_1)}_{23})$ and $m(f^{(i_1,j_1)}_{13})$ 
in $f^{(j_1)}_{23}$ and $f^{(i_1,j_1)}_{13}$
are $\pm \alpha_{j_1}$ and $\pm \alpha_{i_1,j_1}$ respectively.

Since $f^{(i_1,j_1)}_{23}=0$, we have $ \sum \alpha_{j_1}f^{(j_1)}_{23}=0$. Thus, the coefficient of the maximal monomial 
of the set $\{m(f^{(j_1)}_{23}) \ : \ j_1 \geq 1\}$ is $0$. By induction, every coefficient $\alpha_{j_1}$ is $0$. We can use  similar argument to prove that every $\alpha_{i_1,j_1}$ is $0$. 
\end{proof}
\end{proposition}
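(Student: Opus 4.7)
The spanning direction is already in hand: since $I\subseteq Id$, Proposition~\ref{geradoresdebzeroni} gives
\[ B_{0N}(Id)=\mbox{span}\{f+Id\cap B_{0N} \ : \ f\in S_3\},\]
so the whole task reduces to proving linear independence of the images of $S_3\cap B_{0N}$ in $B_{0N}(Id)$. Once this is done, $\dim B_{0N}(Id)=\dim B_{0N}(I)$ follows, and the equality $Id\cap B_{0N}=I\cap B_{0N}$ is an automatic consequence.

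To establish linear independence, the plan is to mimic the method used in Proposition~\ref{propositionmain1}: assume a linear relation
\[\alpha f+\sum\alpha_{j_1}f^{(j_1)}+\sum\alpha_{i_1,j_1}f^{(i_1,j_1)}\in Id,\]
where $f,f^{(j_1)},f^{(i_1,j_1)}$ range over the three families in $S_3\cap B_{0N}$ described in (\ref{caradospolf}), and evaluate on the qgeneric matrices $Z_k$ from Observation~\ref{ordemnasvariaveis}. By Lemma~\ref{lemamatrizesgenericas} the resulting matrix must be $0$, so every entry $e_{ij}$ contribution must vanish as an element of the polynomial algebra $\mathbb{F}[z_{ij}^k]$.

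The key technical step is then to identify, via direct computation with the matrices $Z_k$, which entries $e_{ij}$ each type of generator can occupy. First, $f=z_1^{n_1}\dots z_s^{n_s}$ evaluates to a matrix with a nonzero $(1,1)$-entry, whereas formulas (a) and (b) show that $[Z_{j_1},\dots,Z_{j_n}]$ is a multiple of $e_{12}-e_{23}$ and $Z_{i_1}[Z_{j_1},\dots,Z_{j_{n-1}}]$ lies in $\mbox{span}\{e_{12},e_{13}\}$; so the $(1,1)$-entry of the evaluation picks out precisely the coefficient $\alpha$, forcing $\alpha=0$. Next, the $(2,3)$-entry only receives contributions from the $f^{(j_1)}$, and the leading monomial identification $m(f^{(j_1)}_{23})=z_{12}^{j_1}\prod_{l\ge 2}z_{11}^{j_l}$ together with the order on $L$ lets one read off each $\alpha_{j_1}$ inductively from the coefficient of the maximal surviving monomial. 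Finally, with all $\alpha_{j_1}=0$, the $(1,3)$-entry leaves only the $f^{(i_1,j_1)}$ contributions and the analogous leading-monomial extraction kills every $\alpha_{i_1,j_1}$.

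I do not expect any serious obstacle: Lemma~\ref{lemamatrizesgenericas} does all the work of turning $Id$-membership into a genuine vanishing of matrix entries, and the qgeneric matrices are designed precisely so that different entries of $Z_k$ appear as algebraically independent indeterminates. The only slightly delicate point is making sure the three families of polynomials can be separated by looking at distinct entries $(1,1)$, $(2,3)$, $(1,3)$ in the correct order, which is exactly what the explicit formulas (a)--(b) guarantee; once these are in place the remaining leading-monomial bookkeeping is routine and completely parallel to the argument used in Proposition~\ref{propositionmain1}.
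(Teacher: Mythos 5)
Your proposal is correct and follows essentially the same route as the paper: spanning from Proposition \ref{geradoresdebzeroni}, evaluation on the qgeneric matrices via Lemma \ref{lemamatrizesgenericas}, separation of the three families through the entries $(1,1)$, $(2,3)$, $(1,3)$ in that order, and the leading-monomial induction to kill the coefficients. No substantive difference from the paper's argument.
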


\subsection{Subspaces $B_{MN}$ where $M\neq (0), (1)$ and $N=(1)$}

If 
$M=(m_1,\dots ,m_k) \neq (0), (1)$ and $N=(1)$, then denote $B_{MN}=B_{M1}$, that is,
\[ B_{M1}=\{f(y_1,\dots ,y_k,z_1)\in B:\mbox{deg}_{y_i}f=m_i \  \mbox{ and }\mbox{deg}_{z_1}f=1 , \ 1 \leq i \leq k\}\]
and $m=m_1+\dots +m_k>1$. 

\begin{definition}
Let $S_3\subset S_2$ be the set of all polynomials $f^{(i_1)},g^{(i_1)},f^{(i_1,j_1)}\in B_{M1}$ such that:
\begin{align}
& \bullet f^{(i_1)}=[y_{i_1},z_1,y_{i_2},\dots ,y_{i_m}] \in S_2,  \nonumber \\ 
& \bullet g^{(i_1)}=z_1[y_{i_1},y_{i_2},\dots ,y_{i_{m}}] \in S_2, \label{caradospolf2}\\
& \bullet f^{(i_1,j_1)}=[y_{i_1},y_{i_2},\dots ,y_{i_{m-1}}][y_{j_1},z_1] \in S_2. \nonumber
\end{align} 
If $f\in S_3$, we say that $f$ is an $S_3$-standard polynomial.
\end{definition}

\begin{proposition} \label{geradoresdebmumi}
The vector space $B_{M1}(I)$ is spanned by the set of all elements $f+I\cap B_{M1}$ where $f\in B_{M1}$ is $S_3$-standard.
\begin{proof}
This proposition is true for $\Gamma_{m1}(I)$. In fact, we can use the same proof as in \cite[Proposition 5.17]{vinkossca}. Now, if $y_i<y_j$ then $\varphi_{M1}(y_i)\leq \varphi_{M1}(y_j)$. Thus, by Proposition \ref{relacaodegammaeb}, the general case is proved.
\end{proof}
\end{proposition}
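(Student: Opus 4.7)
My plan is to mirror the strategy used for Propositions~\ref{geradoresdebmni} and \ref{geradoresdebzeroni}: first establish the spanning statement for the multilinear subspace $\Gamma_{m1}(I)$, then lift it to the multihomogeneous case $B_{M1}(I)$ via the surjection $\varphi_{M1}$ of Proposition~\ref{relacaodegammaeb}. Since $\varphi_{M1}$ is onto, any spanning set of $\Gamma_{m1}(I)$ produces a spanning set of $B_{M1}(I)$; the only point to verify is that $\varphi_{M1}$ sends $S_3$-standard elements of $\Gamma_{m1}$ to $S_3$-standard elements of $B_{M1}$.

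For the multilinear base case, I would follow the argument of \cite[Proposition 5.17]{vinkossca} verbatim. Starting from an arbitrary $S_2$-standard polynomial of $\Gamma_{m1}$, the position of the unique skew-symmetric variable $z_1$ is constrained: because $z_1$ is smaller than every $y_i$ in our order, $z_1$ cannot sit at position $j_1$ or $k_1$ of any commutator $[x_{j_1},x_{j_2},\dots]$ or $[x_{k_1},x_{k_2}]$ appearing in the $S_2$-form. This leaves exactly three shapes to consider: either $z_1$ is in the prefix (type $g^{(i_1)}$), or $z_1$ occupies position $j_2$ of the long commutator (type $f^{(i_1)}$), or $z_1$ occupies position $k_2$ of a length-two second commutator (type $f^{(i_1,j_1)}$). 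All other distributions of $z_1$ -- for instance $z_1$ in the prefix together with a length-two commutator, or $z_1$ deeper inside the long commutator -- are reduced to one of these three forms modulo $I$ by repeated application of the Jacobi identity together with Lemma~\ref{relacoes1}, Lemma~\ref{relacoes2}, Lemma~\ref{produtos2comut}, Lemma~\ref{relacoes3}, Lemma~\ref{relacoes4} and Lemma~\ref{relacoes5}, exactly as in the cited reference. No step of that reduction uses characteristic zero, so it transfers to our setting of infinite $\mathbb{F}$ with $p>2$.

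For the lifting step, recall that $\varphi_{M1}$ replaces $y_1,\dots,y_m$ by the repeated block $(\underbrace{y_1,\dots,y_1}_{m_1},\dots,\underbrace{y_k,\dots,y_k}_{m_k})$ while fixing $z_1$. The order used in the definition of $S_2$ (and hence $S_3$) is total on the variables, and the substitution is order-preserving: if $y_i < y_j$ in $\Gamma_{m1}$ then $\varphi_{M1}(y_i) \leq \varphi_{M1}(y_j)$ in $B_{M1}$. Consequently the defining inequalities of the three $S_3$-shapes $f^{(i_1)}$, $g^{(i_1)}$, $f^{(i_1,j_1)}$ are preserved (possibly with some strict inequalities becoming equalities, which is still allowed in the $S_3$ definition), and $\varphi_{M1}$ of an $S_3$-standard element of $\Gamma_{m1}$ is $S_3$-standard in $B_{M1}$. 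Invoking Proposition~\ref{relacaodegammaeb} then gives the spanning statement for $B_{M1}(I)$.

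The main obstacle is the multilinear reduction in the second paragraph: verifying that none of the intermediate shapes obtained while pushing $z_1$ into its $S_3$-canonical position fall outside the span of $S_3$-standard polynomials modulo $I$. Since that bookkeeping is identical to the one carried out in \cite[Proposition 5.17]{vinkossca}, I would simply cite it rather than reproduce the casework, keeping the presentation consistent with Propositions~\ref{geradoresdebmni} and \ref{geradoresdebzeroni}.
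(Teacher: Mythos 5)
Your proposal is correct and follows essentially the same route as the paper: establish the multilinear case $\Gamma_{m1}(I)$ by the argument of \cite[Proposition 5.17]{vinkossca}, then lift to $B_{M1}(I)$ via the order-preserving surjection $\varphi_{M1}$ and Proposition \ref{relacaodegammaeb}. The extra detail you give on the possible positions of $z_1$ in an $S_2$-standard polynomial is consistent with the definitions and only makes explicit what the paper leaves to the citation.
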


\begin{proposition} \label{propositionmain3}
The set $\{f+Id\cap B_{M1}:f\in S_3\}$ is a basis for the vector space $B_{M1}(Id)$. In particular, $Id\cap B_{M1}=I\cap B_{M1}$.
\begin{proof}
Since $I\subset Id$, we have, by Proposition \ref{geradoresdebmumi}, 
\[ B_{M1}(Id)=\mbox{span}\{f+Id\cap B_{M1}:f\in S_3\}.\]

We will use the notations of Observation \ref{ordemnasvariaveis}. 
Consider some order $>$ on $L$  such that 
\[ y_{12}^{i+1} > y_{12}^{i} >y_{11}^{i+1} > y_{11}^{i}  >      z_{12}^{i+1} > z_{12}^{i} >z_{11}^{i+1} > z_{11}^{i} \]
for all $i\geq 1$. By using the qgeneric matrices we have the following equalities:
\begin{align*}
& [Y_1,Z_1,Y_2,\dots ,Y_{2l}]= -(y_{11}^1z_{12}^1-y_{12}^1z_{11}^1)\left(\prod_{s=2}^{2l}y_{11}^s\right)\left(e_{12}-e_{23} \right) ,\\
& [Y_1,Z_1,Y_2,\dots ,Y_{2l+1}]=  (y_{11}^1z_{12}^1-y_{12}^1z_{11}^1)\left(\prod_{s=2}^{2l}y_{11}^s \right) \left(y_{11}^{2l+1}(e_{12}+e_{23})-2y_{12}^{2l+1}e_{13}\right) ,
\end{align*} \vspace{-.5cm}
\begin{align*}
& Z_1[Y_1,Y_2,\dots ,Y_{2l}]= (y_{11}^1y_{12}^2-y_{12}^1y_{11}^2)\left(\prod_{s=3}^{2l}y_{11}^s\right)(z_{11}^1 e_{12}
 -z_{12}^1e_{13}), \\
& Z_1[Y_1,Y_2,\dots ,Y_{2l+1}]=
-(y_{11}^1y_{12}^2-y_{12}^1y_{11}^2)\left(\prod_{s=3}^{2l}y_{11}^s\right)(z_{11}^1y_{11}^{2l+1}e_{12}+
(-2z_{11}^1y_{12}^{2l+1}+z_{12}^1y_{11}^{2l+1})e_{13}), \
\end{align*} \vspace{-.5cm}
\begin{align*}
& [Y_1,\dots ,Y_{2l-1}][Y_{2l},Z_1]=-(y_{11}^1y_{12}^2-y_{12}^1y_{11}^2)\left(\prod_{s=3}^{2l-1}y_{11}^s\right)\left(y_{11}^{2l}z_{12}^1-y_{12}^{2l}z_{11}^1\right)e_{13}, \\
& [Y_1,\dots ,Y_{2l}][Y_{2l+1},Z_1]=(y_{11}^1y_{12}^2-y_{12}^1y_{11}^2)\left(\prod_{s=3}^{2l}y_{11}^s\right)\left(y_{11}^{2l+1}z_{12}^1-y_{12}^{2l+1}z_{11}^1\right)e_{13}.\
\end{align*}

Let $f^{(i_1)}$, $g^{(i_1)}$, $f^{(i_1,j_1)}$ as in (\ref{caradospolf2}). Write $f^{(i_1)}(Y_1,\dots ,Y_k,Z_1)=\sum f_{ij}^{(i_1)}e_{ij}$, $g^{(i_1)}(Y_1,\dots ,Y_k,Z_1)=\sum g_{ij}^{(i_1)}e_{ij}$ and $f^{(i_1,j_1)}(Y_1,\dots ,Y_k,Z_1)=\sum f_{ij}^{(i_1,j_1)}e_{ij}$.

Suppose
\[\sum \alpha_{i_1}f^{(i_1)}+\sum \beta_{i_1}g^{(i_1)}+\sum \alpha_{i_1,j_1}f^{(i_1,j_1)}\in Id,\]
where $\alpha_{i_1},\beta_{i_1}, \alpha_{i_1,j_1} \in \mathbb{F}$.

Note that
\[ m(f_{23}^{(i_1)})=y_{12}^{i_1}\left(\prod_{s=2}^my_{11}^{i_s}\right)z_{11}^1,\]
and its coefficient in $f_{23}^{(i_1)}$ is $- \alpha_{i_1}$.
Since $g_{23}^{(i_1)}=f_{23}^{(i_1,j_1)}=0$, we have $\sum \alpha_{i_1}f_{23}^{(i_1)}=0$. Thus, the coefficient of the maximal monomial of the set $\{m(f_{23}^{(i_1)}):i_1\geq 1\}$ is 0. By induction, every coefficient $\alpha_{i_1}$ is 0.
Now, 
\[ m(g_{12}^{(i_1)})=y_{12}^{i_1}\left(\prod_{s=2}^my_{11}^{i_s}\right)z_{11}^1, \]
and its coefficient in $g_{12}^{(i_1)}$ is $\pm \beta_{i_1}$.
Since $f_{12}^{(i_1,j_1)}=0$, we have $\sum \beta_{i_1}g_{12}^{(i_1)}=0$. Thus, the coefficient of the maximal monomial of the set $\{m(g_{12}^{(i_1)}):i_1>1\}$ is 0. By induction, every coefficient $\beta_{i_1}$ is 0. 
This implies $\sum \alpha_{i_1,j_1}f_{13}^{(i_1,j_1)}=0$. Since 
\[ m(f_{13}^{(i_1,j_1)})=y_{12}^{i_1}y_{12}^{j_1}\left(\prod_{s=2}^{m-1}y_{11}^{i_s}\right)z_{11}^1, \]
and its coefficient in $f_{13}^{(i_1,j_1)}$ is $\pm \alpha_{i_1,j_1}$,
we can use similar argument to prove that every $\alpha_{i_1,j_1}$ is $0$.
\end{proof}
\end{proposition}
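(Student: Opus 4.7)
The plan is to follow the same template as Proposition \ref{propositionmain1} and Proposition \ref{propositionmain2}. By Proposition \ref{geradoresdebmumi}, together with the inclusion $I\subseteq Id$, the set $\{f+Id\cap B_{M1}:f\in S_3\}$ already spans $B_{M1}(Id)$, so the entire content is linear independence modulo $Id$. I would set up a generic relation
\[\sum\alpha_{i_1}f^{(i_1)}+\sum\beta_{i_1}g^{(i_1)}+\sum\alpha_{i_1,j_1}f^{(i_1,j_1)}\in Id\]
and aim to peel off coefficients one family at a time.

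The key tool is Lemma \ref{lemamatrizesgenericas}: substitute the qgeneric matrices $Y_k$ (and $Z_1$) so the above sum becomes the zero matrix in $UT_3(F[y_{ij}^k,z_{ij}^k])$. Then I would compute the six explicit formulas that are already laid out in the statement for $[Y_1,Z_1,Y_2,\dots,Y_n]$, $Z_1[Y_1,\dots,Y_n]$, and $[Y_1,\dots,Y_{n-1}][Y_n,Z_1]$. The decisive observation from those formulas is the following entry-support pattern: each $f^{(i_1)}$ has a nonzero $(2,3)$-entry (the $e_{12}\pm e_{23}$ or $y_{11}^{2l+1}(e_{12}+e_{23})$ piece); each $g^{(i_1)}$ has zero $(2,3)$-entry but a nonzero $(1,2)$-entry contributed by $z_{11}^1$; and each $f^{(i_1,j_1)}$ lives entirely in the $(1,3)$-slot. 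So I would extract coefficients in three waves.

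First wave: reading the $(2,3)$-entry of the relation kills the $\beta$'s and $\alpha_{i_1,j_1}$'s, leaving $\sum\alpha_{i_1}f_{23}^{(i_1)}=0$. Using the chosen order on $L$ (with $y_{12}^{i+1}>y_{12}^i>y_{11}^{i+1}>y_{11}^i>z_{12}^{i+1}>\dots$), the leading monomial of $f_{23}^{(i_1)}$ is $y_{12}^{i_1}\left(\prod_{s=2}^m y_{11}^{i_s}\right)z_{11}^1$ with coefficient $\pm\alpha_{i_1}$; these monomials are pairwise distinct as $i_1$ varies (with the remaining multiset $\{i_2,\dots,i_m\}$ fixed by the multi-degree $M$), so an induction on the maximal $y_{12}^{i_1}$ factor yields $\alpha_{i_1}=0$ for all $i_1$. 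Second wave: reading the $(1,2)$-entry now kills $\alpha_{i_1,j_1}$, leaving $\sum\beta_{i_1}g_{12}^{(i_1)}=0$; the same leading-monomial induction gives $\beta_{i_1}=0$. Third wave: with the $\alpha$'s and $\beta$'s gone, the relation reduces to $\sum\alpha_{i_1,j_1}f_{13}^{(i_1,j_1)}=0$, and the leading monomial $y_{12}^{i_1}y_{12}^{j_1}\left(\prod_{s=2}^{m-1}y_{11}^{i_s}\right)z_{11}^1$ again separates the remaining terms.

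The main thing to watch is that within each wave the leading monomials of the relevant entries are genuinely distinct across the $S_2$-standard indexings. This uses the ordering conditions built into the definition of $S_3$ (for example $i_1>i_2\leq i_3\leq\dots$ inside each commutator) to make each triple $(i_1,\{i_2,\dots\},j_1)$ recoverable from the position of the $y_{12}$-factors. This step is routine but is the only place where a little care is needed; once it is in place, the standard leading-monomial induction carried out in the previous two propositions applies verbatim and forces all coefficients to vanish, giving both the basis statement and the equality $Id\cap B_{M1}=I\cap B_{M1}$.
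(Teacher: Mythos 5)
Your proposal is correct and follows essentially the same route as the paper's proof: spanning via Proposition \ref{geradoresdebmumi}, substitution of the qgeneric matrices through Lemma \ref{lemamatrizesgenericas}, and the three-wave extraction of coefficients from the $(2,3)$-, $(1,2)$- and $(1,3)$-entries using the same leading monomials and the same monomial order. The entry-support pattern you identify (the $f^{(i_1,j_1)}$ concentrated in the $(1,3)$-slot, the $g^{(i_1)}$ vanishing in the $(2,3)$-slot) is exactly the mechanism the paper uses.
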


\subsection{Subspaces $B_{MN}$ where $M=(1)$ and $N\neq (0)$}

If $M=(1)$ and $N=(n_1,\ldots , n_s) \neq (0)$, then denote $B_{MN}=B_{1N}$, that is
\[ B_{1N}=\{f(y_1,z_1,\dots ,z_s)\in B: \mbox{deg}_{y_1}f=1 \mbox{ and } \mbox{deg}_{z_i}f=n_i, \ 1\leq i \leq s\}. \]

\begin{definition} \label{caradospolf3}
Let $S_3$ be the set of all polynomials $f^{(j)}$, $g^{(j)}$, $h^{(j)}$, $f^{(i,j)} \in B_{1N}$ such that
\begin{itemize}
\item[(a)] $f^{(j)}=z_1^{n_1}\dots z_j^{n_j-1}\dots z_s^{n_s}[y_1,z_j]$, where $1\leq j \leq s$.  
\item[(b)] $g^{(j)}=[y_1,z_j]z_1^{n_1}\dots z_j^{n_j-1}\dots z_s^{n_s}$, where $1\leq j \leq s$. 
\item[(c)] $ h^{(j)}=z_1^{n_1}\dots z_j^{n_j-1}\dots z_s^{n_s-1}[y_1,z_j]z_s$, where $1\leq j \leq s$.
\item[(d)] $f^{(i,j)}=z_1^{n_1}\dots z_i^{n_i-1}\dots z_j^{n_j-1}\dots z_s^{n_s}z_i[y_1,z_j]$, where:
\begin{itemize}
\item[(d1)]  $1\leq i \leq j \leq s$  and  $i<s$ if $n_s > 1$,
\item[(d2)]  $1\leq i \leq j \leq s-1$ if $n_s = 1$.
\end{itemize}
\end{itemize} 
If $f\in S_3$, we say that $f$ is an $S_3$-standard polynomial.
\end{definition}

\begin{proposition} \label{geradB1N}
The vector space $B_{1N}(I)$ is spanned by the set of all elements $f+I\cap B_{1N}$ where $f$ is $S_3$-standard.
\begin{proof}
This proposition is true for $\Gamma_{1n}(I)$. In fact, we can use the same proof as in \cite[Proposition 5.20]{vinkossca}. Now, if $z_i<z_j$ then $\varphi_{1N}(z_i)\leq \varphi_{1N}(z_j)$. Thus, by Proposition \ref{relacaodegammaeb}, the general case is proved.
\end{proof}
\end{proposition}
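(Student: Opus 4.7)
The plan is to mirror the pattern already used in Propositions \ref{geradoresdebmni}, \ref{geradoresdebzeroni} and \ref{geradoresdebmumi}: establish the claim first in the multilinear case $\Gamma_{1n}(I)$, and then transport it along the surjective substitution map $\varphi_{1N}$ of Proposition \ref{relacaodegammaeb}. Concretely, I would start from Proposition \ref{geradoresdebmni} (applied in the present multilinear setting), which already tells us that $\Gamma_{1n}(I)$ is spanned modulo $I$ by the $S_2$-standard multilinear polynomials in $y_1,z_1,\dots,z_n$.

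The first real step is to rewrite every such $S_2$-standard polynomial, modulo $I$, as a linear combination of the multilinear analogues of the four families $f^{(j)}$, $g^{(j)}$, $h^{(j)}$, $f^{(i,j)}$ from Definition \ref{caradospolf3}. This is exactly the content of Proposition 5.20 of \cite{vinkossca}: using Proposition \ref{ident}(iv)--(vi), Lemma \ref{relacoes1}, Lemma \ref{relacoes2}(a), (d), (e), and Lemma \ref{relacoes3}(i), one pushes the commutator bracket to one of the ends, keeps at most one ``loose'' skew-symmetric variable on the far side, and orders the remaining $z$'s. The only non-cosmetic input beyond the multilinear proof in \cite{vinkossca} is that our relations (iv)--(vi) of Proposition \ref{ident} are the exact analogues used there, so the same reduction works verbatim, giving the multilinear version of the proposition.

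The second step is the lifting. Let $n=n_1+\cdots+n_s$ and consider the surjective linear map $\varphi_{1N}:\Gamma_{1n}(I)\to B_{1N}(I)$ of Proposition \ref{relacaodegammaeb}, which identifies the first $n_1$ skew-symmetric variables with $z_1$, the next $n_2$ with $z_2$, and so on. Since the substitution respects the order $z_1<z_2<\cdots<z_s$ fixed on the generators, the image of each ordered multilinear standard monomial is a monomial in $z_1^{n_1}\cdots z_s^{n_s}$ with exactly one $z$ promoted into a bracket $[y_1,z_j]$, placed either at the left, at the right, or with exactly one separating $z_i$ on one side. A direct inspection shows these are precisely the four families $f^{(j)}$, $g^{(j)}$, $h^{(j)}$, $f^{(i,j)}$ of Definition \ref{caradospolf3}. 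Applying Proposition \ref{relacaodegammaeb} then shows $B_{1N}(I)$ is spanned by the images, i.e.\ by the $S_3$-standard polynomials.

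The main obstacle is accounting-related rather than conceptual: one must verify that the substitution $\varphi_{1N}$ does not produce monomials outside the four declared families, and conversely that the restrictions (d1) and (d2) on $f^{(i,j)}$ correspond exactly to the non-degenerate images of the multilinear standard set. When $n_s>1$, if $j=s$ and $i=s$, the image would be $z_1^{n_1}\cdots z_{s-1}^{n_{s-1}}z_s^{n_s}[y_1,z_s]$, which is already $f^{(s)}$; hence the requirement $i<s$ in (d1). When $n_s=1$ and $j=s$, the term $z_1^{n_1}\cdots z_{s-1}^{n_{s-1}}z_i[y_1,z_s]$ is already subsumed by $h^{(s)}$ after applying Lemma \ref{relacoes2}(d) to reorder the initial $z$'s, which justifies restricting to $j\leq s-1$ in (d2). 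Once these identifications are checked, the spanning statement follows from the surjectivity of $\varphi_{1N}$, and the proof is complete.
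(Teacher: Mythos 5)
Your proposal follows the paper's proof exactly: establish the multilinear case $\Gamma_{1n}(I)$ by the argument of \cite[Proposition 5.20]{vinkossca}, then transport it to $B_{1N}(I)$ via the order-preserving substitution $\varphi_{1N}$ and Proposition \ref{relacaodegammaeb}, checking that the images of the multilinear standard polynomials are the $S_3$-standard ones. The one quibble is your justification of restriction (d2): when $n_s=1$, Observation \ref{observacaoordemdasvariaveis} forces $n_1=\dots=n_s=1$, so $\varphi_{1N}$ is the identity and (d2) is simply inherited from the multilinear standard set (indeed $h^{(s)}$ is not even defined there, as it would require $n_s\geq 2$); this slip does not affect the correctness of the argument.
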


\subsubsection{Subspace $B_{1N}$ where $n_s>1$}


We start this subsection with the next proposition. 
By using similar arguments as the ones used in  \cite[Theorem 6 in Chapter 4]{bahturinbook} we obtain:

\begin{proposition}\label{proposicaocorpoinfinitopotenciasdep}
 Let $\mathbb{F}$ be an infinite field of $char(\mathbb{F})=p >2$. If $H$ is a $T(*)$-ideal then $H$ is generated, as a $T(*)$-ideal,
 by its multihomogeneous elements $f(y_1,\ldots,y_k,z_1,\ldots,z_s)\in H$ with multidegree 
 $(p^{a_1},\ldots,p^{a_k},p^{b_1},\ldots,p^{b_s})$ where $a_1,\ldots,a_k,b_1,\ldots,b_s \geq 0$.
\end{proposition}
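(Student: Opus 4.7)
The plan is to mimic the classical argument of Bahturin for ordinary T-ideals, replacing the role of usual polynomial substitutions by $*$-endomorphism substitutions that respect the $\mathbb{Z}_2$-grading of $\mathbb{F}\langle Y \cup Z \rangle$. As noted in the paper just before Observation~\ref{observacaoordemdasvariaveis}, since $\mathbb{F}$ is infinite the T$(*)$-ideal $H$ is already generated by its multihomogeneous elements, so it suffices to prove that every multihomogeneous $f \in H$ of multidegree $(m_1,\ldots,m_k,n_1,\ldots,n_s)$ lies in the T$(*)$-ideal generated by multihomogeneous elements of $H$ whose multidegrees are tuples of powers of $p$. I would induct on the multidegree, ordering multidegrees by the multiset obtained by sorting the entries in decreasing order and comparing lexicographically; this is well-founded.

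If every entry of the multidegree is a power of $p$ there is nothing to do. Otherwise some entry is not a $p$-power; without loss of generality suppose $m_1$ is not a power of $p$ (the case of some $n_j$ is handled identically, the only difference being that one substitutes two skew-symmetric variables instead of two symmetric ones). Consider the substitution $y_1 \mapsto y_1^{(1)} + y_1^{(2)}$, where $y_1^{(1)}, y_1^{(2)}$ are two new symmetric variables. This is a $*$-endomorphism of $\mathbb{F}\langle Y \cup Z\rangle$, so $f(y_1^{(1)}+y_1^{(2)}, y_2, \ldots, y_k, z_1, \ldots, z_s) \in H$. Decompose this polynomial into its bihomogeneous components $g_{i, m_1 - i}$ of bidegree $(i, m_1-i)$ in $(y_1^{(1)}, y_1^{(2)})$. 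The standard Vandermonde extraction argument (valid because $\mathbb{F}$ is infinite, and compatible with T$(*)$-ideals since they are closed under scalar substitutions of symmetric variables) shows that each $g_{i, m_1-i}$ itself lies in $H$.

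Now I would invoke Lucas' theorem: because $m_1$ is not a power of $p$, its base-$p$ expansion has at least two nonzero digits, so there exists $0 < i < m_1$ with $\binom{m_1}{i} \not\equiv 0 \pmod{p}$. For such $i$, a direct computation (the multihomogeneous analogue of the binomial expansion) yields
\[
g_{i, m_1 - i}(y_1, y_1, y_2, \ldots, y_k, z_1, \ldots, z_s) \;=\; \binom{m_1}{i}\, f(y_1, y_2, \ldots, y_k, z_1, \ldots, z_s).
\]
Since $\binom{m_1}{i}$ is invertible in $\mathbb{F}$, $f$ belongs to the T$(*)$-ideal generated by $g_{i, m_1-i}$. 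But $g_{i,m_1-i}$ is multihomogeneous of multidegree $(i, m_1-i, m_2, \ldots, m_k, n_1, \ldots, n_s)$, which replaces the entry $m_1$ by two strictly smaller entries and is therefore strictly smaller in our well-ordering. The inductive hypothesis finishes the job.

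The only nontrivial point I expect is verifying cleanly that extracting the bihomogeneous part $g_{i,m_1-i}$ preserves membership in $H$; this is routine over an infinite field, but one must check that the auxiliary substitutions $y_1^{(1)} \mapsto \lambda y_1^{(1)}$ used for the Vandermonde separation are $*$-substitutions, which they are because they act on a symmetric variable by a scalar. The identity relating $g_{i,m_1-i}$ to $\binom{m_1}{i} f$ under the diagonal specialization is a standard formal computation and carries no real difficulty.
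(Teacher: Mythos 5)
Your argument is correct and is essentially the one the paper relies on: the paper gives no details, merely citing Bahturin's Theorem 6 of Chapter 4, and your variable-splitting, Vandermonde extraction and binomial-coefficient reduction is precisely that classical argument transplanted to the free algebra with involution (with the needed check that the auxiliary substitutions respect the symmetric/skew splitting, which you make). One cosmetic point: the criterion from Lucas' theorem should be that the base-$p$ digit \emph{sum} of $m_1$ is at least two rather than that there are two nonzero digits (e.g.\ $m_1=2p^a$ has a single nonzero digit), but this is exactly the condition equivalent to $m_1$ not being a power of $p$, so your conclusion that some $\binom{m_1}{i}\not\equiv 0 \pmod{p}$ with $0<i<m_1$ exists is intact.
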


We want to show that $I=Id$ by proving $Id\cap B_{MN}= I\cap B_{MN}$. By the last proposition, 
in this subsection is sufficient to consider the case $1 < n_s=p^{b_s} $. Since $char(\mathbb{F})=p \geq 3$ we have 
$n_s \geq 3$. Thus, from now on, we assume $n_s \geq 3$ in $B_{1N}$.

\begin{definition}
Let $S_4$ be the set of all polynomials $f^{(j)}$, $g^{(j)}$, $h^{(s)}$, $p^{(i,j)} \in B_{1N}$ such that:
\begin{align}
& \bullet f^{(j)},g^{(j)},h^{(s)}\in S_3 \mbox{ as in Definition }\ref{caradospolf3}, \label{caradospolf4}\\ 
& \bullet p^{(i,j)}=z_1^{n_1}\dots z_i^{n_i-1}\dots z_j^{n_j-1}\dots z_s^{n_s-1}[z_s,z_i][y_1,z_j], \ 1\leq i\leq j \leq s \mbox{ and } i<s  \nonumber .
\end{align} 
If $f\in S_4$, we say that $f$ is an $S_4$-standard polynomial.
\end{definition}

\begin{proposition}\label{geradB1N(I)}
If $n_s \geq 3$, then the vector space $B_{1N}(I)$ is spanned by the set of all elements $f+I\cap B_{1N}$ where $f$ is $S_4$-standard.
\begin{proof}
Let $\Lambda=\mbox{span}\{f+I\cap B_{1N}: \ f\in S_4\}$.
By Proposition \ref{geradB1N}, it is enough to prove that
\begin{align*}
& h^{(j)}+I\cap B_{1N}\in \Lambda, \  j<s; \\
& f^{(i,j)}+I\cap B_{1N}\in \Lambda, \ 1\leq i \leq j \leq s \mbox{ and } i<s. 
\end{align*}

By Lemma \ref{relacoes2}-d) it follows that $f^{(i,j)}+I=p^{(i,j)}+f^{(j)}+I$. Thus $f^{(i,j)}+I\cap B_{1N}\in \Lambda$.

Write $h^{(j)}=wz_sz_s[y_1,z_j]z_s$ where $w=z_1^{n_1}\dots z_j^{n_j-1}\dots z_s^{n_s-3}$. By Lemma 
\ref{relacoes3}- ii), Lemma \ref{relacoes2}-d) and Lemma \ref{relacoes1}-ii)
we obtain:
\begin{align*}
h^{(j)}+I & = wz_sz_j[y_1,z_s]z_s+wz_s[z_j,z_s][y_1,z_s]+I\\
                 & =wz_jz_s[y_1,z_s]z_s+w[z_s,z_j][y_1,z_s]z_s-wz_s[z_s,z_j][y_1,z_s]+I \\
                 & =h^{(s)}-wz_s[z_s,z_j][y_1,z_s]-wz_s[z_s,z_j][y_1,z_s]+I \\
                 & =h^{(s)}-2p^{(j,s)}+I.
\end{align*}
Therefore, $h^{(j)}+I\cap B_{1N}\in \Lambda $.
\end{proof}
\end{proposition}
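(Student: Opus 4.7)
The plan is to start from the $S_3$-standard spanning set of $B_{1N}(I)$ given by Proposition \ref{geradB1N} and rewrite, modulo $I$, those generators that are not already in $S_4$: namely all the $f^{(i,j)}$ and the $h^{(j)}$ with $j<s$.

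For $f^{(i,j)}$ the idea is simple. Its defining monomial ends with $\dots z_s^{n_s}z_i[y_1,z_j]$. Using Lemma \ref{relacoes2}-d) I can freely permute the prefix sitting before a trailing pattern of the form $z_k[x_a,x_b]$, so I bring one $z_s$ immediately to the left of the final $z_i$ and split $z_s z_i = z_i z_s + [z_s,z_i]$. Re-applying Lemma \ref{relacoes2}-d) to the $z_iz_s$ branch regroups it into the prefix of $f^{(j)}$, while the commutator branch is, by construction, exactly $p^{(i,j)}$. This yields $f^{(i,j)} \equiv f^{(j)} + p^{(i,j)}$ modulo $I$.

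For $h^{(j)}$ with $j<s$, the role of the hypothesis $n_s \geq 3$ is to let me write $h^{(j)} = w\,z_s z_s[y_1,z_j]z_s$ with $w$ still containing $n_s-3$ copies of $z_s$. The critical move is to apply Lemma \ref{relacoes3}-ii) to the central block $z_s[y_1,z_j]z_s$ in order to swap the roles of $z_j$ and $z_s$ inside the inner bracket, at the cost of an error term involving $[z_j,z_s][y_1,z_s]$. The main term then looks like $w z_s z_j z_s[y_1,z_s]z_s$, which after one further commutation $z_s z_j = z_j z_s + [z_s,z_j]$ and a regrouping of the prefix becomes $h^{(s)}$ plus another $[z_s,z_j]$-error. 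Both errors should be consolidated via Lemma \ref{relacoes1}-ii), which lets me move the trailing $z_s$ across the bracket product at the cost of a sign; they collapse into a multiple of $p^{(j,s)}$, giving a reduction of the form $h^{(j)} \equiv h^{(s)} - 2\,p^{(j,s)}$ modulo $I$.

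The main obstacle I expect is recognizing that Lemma \ref{relacoes3}-ii), whose somewhat asymmetric shape $[z_1,z_2][z_3,y_4]+z_1[z_2,y_4]z_3-z_2[z_1,y_4]z_3$ does not at first glance suggest a symmetrizing identity, is exactly the right tool for processing the sandwich $z_s[y_1,z_j]z_s$. Once that has been spotted, the remaining work is a careful but routine bookkeeping with the commutator rearrangement lemmas and with Lemma \ref{relacoes2}-d), and the need for $n_s \geq 3$ (rather than merely $n_s \geq 2$) is forced by the necessity of having two copies of $z_s$ flanking the inner bracket while still keeping a further $z_s$ free to play the role of the outside factor.
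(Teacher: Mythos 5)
Your argument is correct and takes essentially the same route as the paper: the same reductions $f^{(i,j)}\equiv f^{(j)}+p^{(i,j)}$ and $h^{(j)}\equiv h^{(s)}-2p^{(j,s)}$ modulo $I$, obtained from Lemma \ref{relacoes2}-d), Lemma \ref{relacoes3}-ii) and Lemma \ref{relacoes1}-ii) exactly as in the text. (The displayed word $wz_sz_jz_s[y_1,z_s]z_s$ has one $z_s$ too many --- the main term after applying Lemma \ref{relacoes3}-ii) is $wz_sz_j[y_1,z_s]z_s$ --- but this is only a slip of notation and does not affect the argument.)
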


\begin{proposition} \label{baseB1N(Id)}
If $n_s \geq 3$, then 
$\{f+Id\cap B_{1N}: \ f\in S_4\}$ is a basis for the vector space $B_{1N}(Id)$. In particular, $Id\cap B_{1N}=I\cap B_{1N}$.
\begin{proof}
Since $I\subset Id$, we have, by Proposition \ref{geradB1N(I)}
\[B_{1N}(Id)=\mbox{span}\{f+Id\cap B_{1N}:f\in S_4\}. \]

We will use the notations of Observation \ref{ordemnasvariaveis}. 
Consider some order $>$ on $L$  such that 
\[ z_{12}^{l+1} > z_{12}^{l} > z_{12}^{s} >z_{11}^{i+1} > z_{11}^{i}  >      y_{12}^{i+1} > y_{12}^{i} >y_{11}^{i+1} > y_{11}^{i} \]
for all $1 \leq l \leq s-2$ and  $i\geq 1$ . 
 By using the qgeneric matrices we have the following equalities:

\begin{align*}
\bullet & Z_1\dots \hat{Z_j}\dots Z_m[Y_1,Z_j]=\left[\prod_{l=1, \ (l\neq j)}^m z_{11}^l\right](y_{11}^1z_{12}^j-y_{12}^1z_{11}^j)e_{12}+ue_{13}, \\
\bullet & [Y_1,Z_j]Z_1\dots \hat{Z_j}\dots Z_m=(-1)^{m-1}(y_{11}^1z_{12}^j-y_{12}^1z_{11}^j)\left[ \prod_{l=1, \ (l\neq j)}^m z_{11}^l\right]e_{23}+ve_{13}, \\
\bullet & Z_1\dots Z_{s-1}Z_s[Y_1,Z_s]Z_s=-\left[ \prod_{l=1}^s z_{11}^l\right](y_{11}^1z_{12}^s-y_{12}^1z_{11}^s)z_{12}^s e_{13}\\
& +\left(2\left[\prod_{l=1}^s z_{11}^l\right](y_{12}^1z_{12}^s+y_{13}^1z_{11}^s)z_{11}^s
-\left[\prod_{l=1}^{s-1} z_{11}^l\right]z_{12}^s(y_{11}^1z_{12}^s-y_{12}^1z_{11}^s)z_{11}^s\right)e_{13}, \\
\bullet & Z_1\dots \hat{Z_i}\dots \hat{Z_j}\dots Z_s[Z_s,Z_i][Y_1,Z_j]= \\
& \left[\prod_{l=1, \ (l\neq i,j)}^s z_{11}^l\right](z_{11}^sz_{12}^i-z_{12}^sz_{11}^i)(y_{11}^1z_{12}^j-y_{12}^1z_{11}^j)e_{13}.
\end{align*} 
for some polynomials $u,v \in \mathbb{F}[L]$.

Let $f^{(j)}$, $g^{(j)}$, $h^{(s)}$, $p^{(i,j)}$ as in (\ref{caradospolf4}). Write
\begin{align*}
f^{(j)}(Y_1,Z_1,\dots ,Z_s)=\sum f_{ab}^{(j)}e_{ab} \ , \ 
 \ \ &g^{(j)}(Y_1,Z_1,\dots ,Z_s)=\sum g_{ab}^{(j)}e_{ab} \ , \\
h^{(s)}(Y_1,Z_1,\dots ,Z_s)=\sum h_{ab}^{(s)}e_{ab} \ , \  \ \ 
&p^{(i,j)}(Y_1,Z_1,\dots ,Z_s)=\sum p_{ab}^{(i,j)}e_{ab} \ ,
\end{align*} 
and suppose
\[ \sum \alpha_jf^{(j)} + \sum \beta_jg^{(j)}+\gamma h^{(s)}+\sum \beta_{i,j}p^{(i,j)} \in Id, \]
where $\alpha_j, \ \beta_j, \ \gamma , \beta_{i,j} \in \mathbb{F}$. 
Now we use the same arguments as  Propositions \ref{propositionmain1},  \ref{propositionmain2} and \ref{propositionmain3}.
In short, by the following table 

\begin{center}
\begin{tabular}{|c|c|c|c|}
\hline
Entry & Information &Monomial & Its coefficient \\
\hline
$(1,2)$& $g_{12}^{(j)}=h_{12}^{(s)}=p_{12}^{(i,j)}=0$ & $m(f_{12}^{(j)})$ & $\alpha_j $\\
\hline
$(2,3)$& $h_{23}^{(s)}=p_{23}^{(i,j)}=0$ & $m(g_{23}^{(j)})$ & $ \pm \beta_j $\\
\hline
$(1,3)$&  & $w$ & $2 \gamma $\\
\hline
$(1,3)$&  & $m(p_{13}^{(i,j)})$ & $\beta_{i,j}$\\
\hline
\end{tabular}
\end{center}
where
\begin{eqnarray}
m(f_{12}^{(j)})&=& (z_{11}^1)^{n_1} \ldots (z_{11}^j)^{n_j-1} \ldots (z_{11}^s)^{n_s} y_{11}^1z_{12}^j ,\\
m(g_{23}^{(j)})&=& y_{11}^1z_{12}^j(z_{11}^1)^{n_1}\ldots (z_{11}^j)^{n_j-1}\ldots (z_{11}^s)^{n_s} ,\\
w&=& y_{13}^1(z_{11}^1)^{n_1}\ldots (z_{11}^s)^{n_s}, \\
m(p_{13}^{(i,j)})&=& (z_{11}^1)^{n_1}\ldots (z_{11}^i)^{n_i-1}\ldots (z_{11}^j)^{n_j-1}\ldots (z_{11}^s)^{n_s}y_{11}^1z_{12}^iz_{12}^j,
\end{eqnarray}
we have $\alpha_j=0, \ \beta_j=0, \ \gamma =0, \ \beta_{i,j}=0$ respectively. 
\end{proof}
\end{proposition}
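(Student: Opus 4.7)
The proof strategy is identical to that of Propositions \ref{propositionmain1}, \ref{propositionmain2} and \ref{propositionmain3}. The spanning half is already supplied by Proposition \ref{geradB1N(I)} together with the inclusion $I\subseteq Id$, so only linear independence modulo $Id\cap B_{1N}$ is left to prove. My plan is to start from a hypothetical relation
\[\sum\alpha_j f^{(j)}+\sum\beta_j g^{(j)}+\gamma h^{(s)}+\sum\beta_{i,j} p^{(i,j)}\in Id,\]
invoke Lemma \ref{lemamatrizesgenericas} to evaluate at the qgeneric matrices $Y_1,Z_1,\dots,Z_s$, and then extract the vanishing of each coefficient by examining the entries $(1,2)$, $(2,3)$ and $(1,3)$ of $UT_3$ one at a time.

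The first task is to compute the four matrix products displayed in the statement and verify three structural facts: that only $f^{(j)}$ has a non-zero $(1,2)$-entry; that, among the remaining families, only $g^{(j)}$ has a non-zero $(2,3)$-entry; and that in the $(1,3)$-entry the variable $y_{13}^1$ appears exclusively in the expansion of $h^{(s)}$. Fixing the order on $L$ declared in the proof, I would then argue in four stages. Stage one uses the $(1,2)$-entry: the equation $\sum\alpha_j f_{12}^{(j)}=0$ forces every $\alpha_j=0$ by a descending induction on the maximal monomial of $\{m(f_{12}^{(j)})\}$, which uniquely encodes the index $j$. Stage two then reduces the $(2,3)$-entry to $\sum\beta_j g_{23}^{(j)}=0$, and the same maximal-monomial induction on $\{m(g_{23}^{(j)})\}$ yields $\beta_j=0$. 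Stage three isolates the monomials of the $(1,3)$-entry containing $y_{13}^1$; this produces the single scalar equation $2\gamma\,w=0$ with $w=y_{13}^1(z_{11}^1)^{n_1}\cdots(z_{11}^s)^{n_s}$, so $\gamma=0$ because $p>2$. Stage four is the analogous maximal-monomial induction on $\{m(p_{13}^{(i,j)})\}$, whose leading monomial uniquely encodes the pair $(i,j)$ and clears out every $\beta_{i,j}$.

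The main obstacle is the verification of the four matrix identities. The delicate one is the product $Z_1\cdots Z_{s-1}Z_s[Y_1,Z_s]Z_s$: the repeated occurrence of $Z_s$ on both sides of $[Y_1,Z_s]$ must produce the decisive factor of $2$ on the monomial $w$ together with the companion $z_{12}^s$ cross-terms, and one must also confirm that no other family contributes $y_{13}^1$ to its $(1,3)$-entry, otherwise the scalar equation in stage three would mix $\gamma$ with some $\beta_{i,j}$. This is also the only place in the argument where $p>2$ is used, consistently with the reduction $n_s=p^{b_s}\geq 3$ from Proposition \ref{proposicaocorpoinfinitopotenciasdep}, which is what placed us in this subsection to begin with. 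Once the four matrix formulas are in hand, the remaining leading-monomial bookkeeping is essentially identical to that of Propositions \ref{propositionmain1}--\ref{propositionmain3}.
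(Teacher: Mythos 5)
Your proposal is correct and follows essentially the same route as the paper: spanning via Proposition \ref{geradB1N(I)}, then linear independence by evaluating on the qgeneric matrices (Lemma \ref{lemamatrizesgenericas}) and eliminating $\alpha_j$, $\beta_j$, $\gamma$, $\beta_{i,j}$ in that order through the $(1,2)$, $(2,3)$ and $(1,3)$ entries, with the monomial $w=y_{13}^1(z_{11}^1)^{n_1}\cdots(z_{11}^s)^{n_s}$ carrying coefficient $2\gamma$ and $p>2$ forcing $\gamma=0$. Your one caveat is slightly misplaced --- $f^{(j)}_{13}$ and $g^{(j)}_{13}$ do contain $y_{13}^1$, but this is harmless since $\alpha_j$ and $\beta_j$ are already zero by that stage; the only check needed is that $p^{(i,j)}_{13}$ is free of $y_{13}^1$, which it is.
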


\subsubsection{Subspace $B_{1N}$ where $n_s=1$}

By Observation \ref{observacaoordemdasvariaveis}, if $n_s=1$ then $n_1=\ldots =n_s=1$. In this case, 
$B_{1N}=\Gamma_{1s}$.

\begin{proposition} 
If $n_s =1$, then 
$\{f+Id\cap \Gamma_{1s}: \ f\in S_3\}$ is a basis for the vector space $\Gamma_{1s}(Id)$. In particular, $Id\cap \Gamma_{1s}=I\cap \Gamma_{1s}$.
\begin{proof}
If $f$ is $S_3$-standard then $f$ is $T_2$-standard in \cite[Definition 5.18]{vinkossca}. Now we can use the same proof 
of \cite[Lemma 6.5]{vinkossca}.
\end{proof}
\end{proposition}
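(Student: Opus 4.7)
The plan is to follow the blueprint of the previous propositions in this subsection. Since $n_s=1$ forces $n_1=\cdots=n_s=1$ by the ordering convention in Observation \ref{observacaoordemdasvariaveis}, the ambient space is the multilinear space $B_{1N}=\Gamma_{1s}$, and every polynomial in $S_3$ is multilinear in $y_1,z_1,\ldots,z_s$. Spanning is automatic: Proposition \ref{geradB1N} gives $B_{1N}(I)=\mbox{span}\{f+I\cap B_{1N}:f\in S_3\}$, and the further projection to $\Gamma_{1s}(Id)$ (which exists because $I\subseteq Id$) preserves this spanning property.

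For linear independence I would replay the argument of Propositions \ref{propositionmain1}--\ref{baseB1N(Id)}. Suppose
\[
\sum_{j}\alpha_j f^{(j)}+\sum_{j}\beta_j g^{(j)}+\sum_{j}\gamma_j h^{(j)}+\sum_{i\leq j\leq s-1}\delta_{i,j}f^{(i,j)}\in Id,
\]
with scalars $\alpha_j,\beta_j,\gamma_j,\delta_{i,j}\in\mathbb{F}$. By Lemma \ref{lemamatrizesgenericas}, substituting the qgeneric matrices $Y_1,Z_1,\ldots,Z_s$ of Observation \ref{ordemnasvariaveis} produces the zero matrix in $UT_3(\mathbb{F}[L])$. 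Ordering $L$ as in Proposition \ref{baseB1N(Id)}, so that $z_{12}^i$ dominates $z_{11}^i$ and $y$-variables sit below $z$-variables, I would peel off coefficients entry by entry: the $(1,2)$-entry recovers each $\alpha_j$, since $g^{(j)}$, $h^{(j)}$ and $f^{(i,j)}$ either vanish there or contribute strictly smaller leading monomials; the $(2,3)$-entry then recovers each $\beta_j$; and the surviving relation in the $(1,3)$-entry involves only the $h^{(j)}$ and $f^{(i,j)}$, whose leading monomials can be arranged to be pairwise distinct, yielding all $\gamma_j$ and $\delta_{i,j}$ by induction on the monomial order. Once linear independence is established, the ``in particular'' clause $Id\cap\Gamma_{1s}=I\cap\Gamma_{1s}$ follows immediately, since then the surjection $\Gamma_{1s}(I)\twoheadrightarrow\Gamma_{1s}(Id)$ is a bijection on a basis, hence an isomorphism.

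An equivalent and shorter route, the one the authors actually take, is to notice that in this multilinear regime the notion of $S_3$-standard from Definition \ref{caradospolf3} collapses to the notion of $T_2$-standard from \cite[Definition 5.18]{vinkossca}; after this identification the linear-independence calculation is precisely the content of \cite[Lemma 6.5]{vinkossca} and can be cited directly. The only real obstacle is bookkeeping: matching the two standardness definitions and the two variable orderings, and verifying that no $S_3$-polynomial falls outside the $T_2$ list. The qgeneric-matrix computation itself introduces no new phenomena; in particular, the $2\gamma$ coefficient that appeared in Proposition \ref{baseB1N(Id)} under the hypothesis $n_s\geq 3$ does not arise here, so the argument is, if anything, slightly simpler than the one just completed.
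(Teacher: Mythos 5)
Your closing remark is, in fact, the paper's entire proof: since $n_s=1$ forces all $n_i=1$, the $S_3$-standard polynomials of Definition \ref{caradospolf3} coincide with the $T_2$-standard polynomials of \cite[Definition 5.18]{vinkossca}, and the authors simply invoke \cite[Lemma 6.5]{vinkossca} for the linear independence. So the proposal does contain the correct (and intended) argument. The spanning half via Proposition \ref{geradB1N} is also fine.

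The direct computation you offer as your main route, however, has a concrete flaw in its first peeling step. You claim the $(1,2)$-entry recovers the $\alpha_j$ because $g^{(j)}$, $h^{(j)}$ and $f^{(i,j)}$ ``either vanish there or contribute strictly smaller leading monomials.'' This fails for $f^{(i,j)}=z_1\cdots\hat{z}_i\cdots\hat{z}_j\cdots z_s\,z_i[y_1,z_j]$: on the qgeneric matrices the prefix $W'Z_i$ is upper triangular with $(1,1)$-entry $\prod_{l\neq j}z_{11}^l$, and $[Y_1,Z_j]$ has $(2,2)$-entry $0$, so
\[
f^{(i,j)}_{12}=\Bigl(\prod_{l\neq j}z_{11}^l\Bigr)\bigl(y_{11}^1z_{12}^j-y_{12}^1z_{11}^j\bigr)=f^{(j)}_{12},
\]
the \emph{same} polynomial with the same leading monomial. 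The $(1,2)$-entry therefore only yields relations of the shape $\alpha_j+\sum_i\delta_{i,j}=0$, and the $\alpha_j$ must be disentangled from the $\delta_{i,j}$ jointly with the $(1,3)$-entry. This is exactly why, in the companion case $n_s\geq 3$ (Propositions \ref{geradB1N(I)} and \ref{baseB1N(Id)}), the authors trade $f^{(i,j)}$ for $p^{(i,j)}$, whose leading commutator $[z_s,z_i]$ kills the $(1,2)$-entry; that trade is unavailable when $n_s=1$, and the resulting more delicate bookkeeping is precisely the content of \cite[Lemma 6.5]{vinkossca}. As written your sketch would not close; the strategy is salvageable with a corrected peeling order, but the clean fix is the one you already identified, namely the direct citation.
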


\subsection{Subspaces $B_{MN}$ where $M \neq (0),(1)$ and $N\neq (0),(1)$}

Let $M=(m_1,\ldots , m_k)$ and $N=(n_1,\ldots ,n_s)$. In this section,
\[m= m_1+ \ldots + m_k \geq 2 \ \ \mbox{and} \ \  n=n_1+\ldots +n_s \geq 2.\]

\begin{definition}
Let $S_3\subset S_2$ be the set of all polynomials 
\[f^{({i_1})}, \ g^{({i_1})}, \ f^{(i,{i_1})}, \ g^{(i,{i_1})}, \   h^{(j_1,p_1)}  \in B_{MN}\] such that:
\begin{align}
& \bullet f^{({i_1})}=[z_{i_1},z_1,x_{i_2},\dots ,x_{i_{t-1}}] \in S_2,  \nonumber \\
& \bullet g^{({i_1})}=[y_{i_1},z_1,x_{i_2},\dots ,x_{i_{t-1}}] \in S_2, \nonumber \\
& \bullet f^{(i,{i_1})}=z_i[z_{i_1},x_{i_2},\dots ,x_{i_{t-1}}] \in S_2 \ \ \mbox{and} \ \  z_i\leq z_{i_1}   \label{polinomios},\\
& \bullet g^{(i,{i_1})}=z_i[y_{i_1},x_{i_2},\dots ,x_{i_{t-1}}]\in S_2, \nonumber \\
& \bullet h^{(j_1,p_1)}=[y_{j_1},x_{j_2},\dots ,x_{j_{t-2}}][y_{p_1},z_1]\in S_2  \nonumber, 
\end{align}
where $t=m+n$. If $f\in S_3$, we say that $f$ is an $S_3$-standard polynomial.
\end{definition}

\begin{proposition} \label{geradoresBMN}
The vector space $B_{MN}(I)$ is spanned by the set of all elements $f+I\cap B_{MN}$ where $f$ is $S_3$-standard.
\begin{proof}
This proposition is true for $\Gamma_{mn}(I)$. In fact, we can use the same proof as in \cite[Proposition 5.17]{vinkossca}. Now, if $x_i<x_j$, then $\varphi_{MN}(x_i)\leq \varphi_{MN}(x_j)$. Thus, by Proposition \ref{relacaodegammaeb}, the general case is proved.
\end{proof}
\end{proposition}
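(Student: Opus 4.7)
The plan is to follow the two-step strategy signaled by the author's proof sketch: first establish the spanning property in the multilinear case $\Gamma_{mn}(I)$, and then push it down to arbitrary multidegrees via the surjection $\varphi_{MN}$ of Proposition \ref{relacaodegammaeb}.

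For the multilinear step, I would start from Proposition \ref{geradoresdebmni}, which tells us that $\Gamma_{mn}(I)$ is already spanned by $S_2$-standard polynomials
\[
f = z_{i_1}\dots z_{i_t}[x_{j_1},\dots,x_{j_l}][x_{k_1},\dots,x_{k_q}]
\]
subject to the usual $S_2$ constraints ($l,q\neq 1$ and, when $l\geq 2$, either $q=0$ or $q=2$ with the pairwise domination between the two commutators). The task is to show that every such $f$ can be rewritten modulo $I$ as a linear combination of polynomials of the five shapes $f^{(i_1)}$, $g^{(i_1)}$, $f^{(i,i_1)}$, $g^{(i,i_1)}$, $h^{(j_1,p_1)}$ listed in (\ref{polinomios}). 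I would split into cases on $(t,l,q)$. When the $z$-prefix is empty and $q=0$, $f$ is a single long commutator and lands directly in $f^{(i_1)}$ or $g^{(i_1)}$ according to the parity of the outer variable. When $t\geq 1$, I use Lemma \ref{relacoes3}, Lemma \ref{relacoes4} and items (iv)--(vi) of Proposition \ref{ident} to swallow all but one $z$-variable of the prefix inside the commutator, so that only a single leading $z_i$ survives on the left, producing $f^{(i,i_1)}$ or $g^{(i,i_1)}$ depending on the parity of the second commutator entry. When $l\geq 2$ and $q=2$, Lemma \ref{produtos2comut} together with items (ii)--(iii) of Proposition \ref{ident} and Lemma \ref{relacoes2} permutes the inner entries of the two commutators so that the product collapses onto a single $h^{(j_1,p_1)}$ shape with $y_{p_1}$ and $z_1$ occupying the short commutator. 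This is precisely the argument used in \cite[Proposition 5.17]{vinkossca}, and all the rewriting tools needed are Lemmas \ref{relacoes1}--\ref{relacoes5} together with the identities of Proposition \ref{ident}.

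For the transfer step, Proposition \ref{relacaodegammaeb} provides the surjective linear map $\varphi_{MN}:\Gamma_{mn}(I)\to B_{MN}(I)$ obtained by collapsing blocks of $y$'s and $z$'s according to the multidegrees $M$ and $N$. Because the chosen order on $Y\cup Z$ satisfies $x_i<x_j\Rightarrow \varphi_{MN}(x_i)\le\varphi_{MN}(x_j)$, the image of an $S_3$-standard multilinear polynomial is again $S_3$-standard in $B_{MN}$ (strict inequalities relax to $\le$ exactly where forced by the identification, which matches the definition of $S_3$). Since $\varphi_{MN}$ is onto, the images span $B_{MN}(I)$, giving the statement.

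The main obstacle is the case analysis in Step 1, specifically reducing the two-commutator case $q=2$ to a single standard $h^{(j_1,p_1)}$. One has to apply the Jacobi-type identity of Lemma \ref{produtos2comut} together with the signed commuting identity (ii) of Proposition \ref{ident} while carefully tracking the parity factors $(-1)^{|x_ix_j|}$, since an inconsistent sign here would destroy the reduction. Everything else is a routine bookkeeping adaptation of the proofs in \cite{vinkossca}.
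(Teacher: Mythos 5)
Your proposal is correct and follows essentially the same route as the paper: establish the spanning statement in the multilinear space $\Gamma_{mn}(I)$ by the rewriting argument of \cite[Proposition 5.17]{vinkossca} (using Proposition \ref{ident} and Lemmas \ref{relacoes1}--\ref{relacoes5}), then transfer to arbitrary multidegrees via the surjection $\varphi_{MN}$ of Proposition \ref{relacaodegammaeb}, noting that the substitution is order-preserving so images of standard polynomials remain standard. The paper's proof is just a terser version of exactly this two-step argument.
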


In $UT_3(F[y_{ij}^k,  z_{ij}^k])$ consider the \textit{sgeneric matrices} 
\[
Z_1=\left[
\begin{array}{ccc}
1 & 0 & 0\\
0 & 0 & 0\\
0 & 0 & -1
\end{array}
\right] \ \ , \ \ 
Z_l=\left[
\begin{array}{ccc}
1 & z_{12}^l & 0\\
0 & 0 & -z_{12}^l \\
0 & 0 & -1
\end{array}
\right] \ \ 
\mbox{and} \ \ Y_j=\left[
\begin{array}{ccc}
1 & y_{12}^j & y_{13}^j\\
0 & 0 & y_{12}^j\\
0 & 0 & 1
\end{array}
\right]\] 
for all $l\geq 2$ and $j\geq 1$. If $w(y_1,\ldots , y_k,z_1, \ldots , z_s)$ is $S_3$-standard then we write
\[w(Y_1,\ldots , Y_k,Z_1, \ldots , Z_s)=\sum_{a,b=1}^3 w_{ab} e_{ab}.\]

Since $\mathbb{F}$ is an infinite field we have the following lemma: 

\begin{lemma}\label{matrizgenericanova}
Let $Y_1,\ldots , Y_k,Z_1, \ldots , Z_s$ be sgeneric matrices. 
  If $f(y_1,\ldots , y_k,z_1, \ldots , z_s) \in Id$, then $f(Y_1,\ldots , Y_k,Z_1, \ldots , Z_s) =0$.
\end{lemma}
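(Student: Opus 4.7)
The plan is to carry out the standard substitution argument inside the scalar extension $UT_3(R)$ with $R=\mathbb{F}[y_{ij}^k,z_{ij}^k]$, exactly as in the proof of Lemma \ref{lemamatrizesgenericas}, with qgeneric matrices replaced by sgeneric ones. First I would identify $UT_3(R)\cong UT_3(\mathbb{F})\otimes_{\mathbb{F}} R$ and extend the involution $*$ to $UT_3(R)$ by applying (\ref{involucaotresportres}) entry-wise (equivalently, by making $*$ act trivially on scalars in $R$). Then I would check by direct inspection that the sgeneric matrices satisfy $Y_j^*=Y_j$ and $Z_l^*=-Z_l$: in $Y_j$ the entries $(1,1)$ and $(3,3)$ both equal $1$ and $(1,2)$ equals $(2,3)=y_{12}^j$, while $(1,3)=y_{13}^j$ is fixed by the swap; in $Z_l$ with $l\geq 2$ the entries $(1,1)=1$ and $(3,3)=-1$ flip sign under the swap and $(1,2)=z_{12}^l$, $(2,3)=-z_{12}^l$ interchange with a sign; and $Z_1$ is handled identically. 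Hence $Y_j\in UT_3(R)^+$ and $Z_l\in UT_3(R)^-$.

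Second, I would invoke the scalar-extension principle for $*$-identities: if $\mathbb{F}$ is infinite and $f\in Id(UT_3(\mathbb{F}),*)$, then $f\in Id(UT_3(R),*)$ with the extended involution. A self-contained proof goes as follows. Any $\xi_i\in UT_3(R)^+$ can be written $\xi_i=\sum_l a_{i,l}\otimes r_{i,l}$ with $a_{i,l}\in UT_3(\mathbb{F})^+$ and $r_{i,l}\in R$, and analogously for $\eta_j\in UT_3(R)^-$. Expanding $f(\xi_1,\dots,\xi_k,\eta_1,\dots,\eta_s)$ gives an element of $UT_3(\mathbb{F})\otimes_{\mathbb{F}} R$, i.e.\ a polynomial in finitely many of the variables $y_{ij}^k,z_{ij}^k$ with coefficients in $UT_3(\mathbb{F})$. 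Specializing those variables to any tuple of scalars in $\mathbb{F}$ yields $f$ evaluated on specific elements of $UT_3(\mathbb{F})^+$ and $UT_3(\mathbb{F})^-$, which is $0$ by hypothesis. Since $\mathbb{F}$ is infinite, a polynomial over the finite-dimensional space $UT_3(\mathbb{F})$ that vanishes on every $\mathbb{F}$-specialization is identically zero, so $f(\xi_1,\dots,\eta_s)=0$ in $UT_3(R)$.

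Finally, applying this with $\xi_i=Y_i$ and $\eta_j=Z_j$ gives the desired conclusion $f(Y_1,\dots,Y_k,Z_1,\dots,Z_s)=0$. The only real content is the involution check for the sgeneric matrices and the standard infinite-field specialization argument; neither step presents a genuine obstacle, which is why the lemma can be dispatched by an analogue of \cite[Lemma 6.1]{vinkossca}.
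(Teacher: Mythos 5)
Your argument is correct and is precisely the standard generic-matrix argument the paper invokes (it gives no details beyond noting that $\mathbb{F}$ is infinite and pointing to the analogue of \cite[Lemma 6.1]{vinkossca}): the sgeneric matrices are indeed symmetric, resp.\ skew-symmetric, for the entry-wise extension of $*$ to $UT_3(\mathbb{F}[y_{ij}^k,z_{ij}^k])$, and every scalar specialization lands in $UT_3(\mathbb{F})^{\pm}$, so the infinite-field Vandermonde-type argument forces each polynomial entry of $f(Y_1,\dots,Z_s)$ to vanish. No issues.
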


\begin{lemma} \label{lemadasigualdades}

Let $Z_l$ and $Y_l$ be the sgeneric matrices, where $l\geq 1$.  

a) If $m\geq 2$ is even then:
\begin{eqnarray}
 [Z_{i_1},Z_{i_2},\dots ,Z_{i_n},Y_{j_1},\dots ,Y_{j_m}]&=&(-1)^n(z_{12}^{i_2}-z_{12}^{i_1})(e_{12}-e_{23}), 
\ \mbox{where} \ n\geq 2; \nonumber
\end{eqnarray}

\vspace{-0.8cm}

\begin{eqnarray}
[Y_{j_1},Z_{i_1},\dots ,Z_{i_n},Y_{j_2},\dots ,Y_{j_m}]&=&(-1)^n(z_{12}^{i_1}-y_{12}^{j_1})(e_{12}-e_{23}),
\ \mbox{where} \ n\geq 2; \nonumber
\end{eqnarray}

\vspace{-0.8cm}

\begin{eqnarray}
Z_i[Z_{i_1},Z_{i_2},\dots ,Z_{i_{n-1}},Y_{j_1},\dots ,Y_{j_m}]&=&(-1)^{n-1}(z_{12}^{i_2}-z_{12}^{i_1})e_{12}+ \nonumber \\ 
&& (-1)^{n}z_{12}^i(z_{12}^{i_2}-z_{12}^{i_1})e_{13}, 
\ \mbox{where} \ n\geq 3; \nonumber
\end{eqnarray}

\vspace{-0.8cm}

\begin{eqnarray}
Z_i[Y_{j_1},Z_{i_1},\dots ,Z_{i_{n-1}},Y_{j_2},\dots ,Y_{j_m}]&=&(-1)^{n-1}(z_{12}^{i_1}-y_{12}^{j_1})e_{12} + \nonumber \\ 
&& (-1)^{n}z_{12}^i(z_{12}^{i_1}-y_{12}^{j_1})e_{13}, 
\ \mbox{where} \ n\geq 2; \nonumber
\end{eqnarray}

\vspace{-0.8cm}

\begin{eqnarray}
[Y_{j_1},Z_{i_1},\dots ,Z_{i_{n-1}},Y_{j_2},\dots ,Y_{j_{m-1}}][Y_{p_1},Z_{p_2}]&=&(-1)^n(z_{12}^{i_1}-y_{12}^{j_1})(z_{12}^{p_2}-y_{12}^{p_1})e_{13}, \nonumber \\ 
&&  
\ \mbox{where} \ n\geq 2. \nonumber
\end{eqnarray}

b) If $m\geq 2$ is odd then:
\begin{eqnarray}
 [Z_{i_1},Z_{i_2},\dots ,Z_{i_n},Y_{j_1},\dots ,Y_{j_m}]&=&(-1)^{n-1}(z_{12}^{i_2}-z_{12}^{i_1})(e_{12}+e_{23})+ \nonumber \\ 
&&2(-1)^{n}(z_{12}^{i_2}-z_{12}^{i_1})y_{12}^{j_m}e_{13},
  \ \mbox{where} \ n\geq 2; \nonumber
\end{eqnarray}

\vspace{-0.8cm}

\begin{eqnarray}
[Y_{j_1},Z_{i_1},\dots ,Z_{i_n},Y_{j_2},\dots ,Y_{j_m}]&=&(-1)^{n-1}(z_{12}^{i_1}-y_{12}^{j_1})(e_{12}+e_{23})+ \nonumber \\ 
&&2(-1)^{n}(z_{12}^{i_1}-y_{12}^{j_1})y_{12}^{j_m}e_{13},
  \ \mbox{where} \ n\geq 2; \nonumber
\end{eqnarray}

\vspace{-0.8cm}

\begin{eqnarray}
Z_i[Z_{i_1},Z_{i_2},\dots ,Z_{i_{n-1}},Y_{j_1},\dots ,Y_{j_m}]&=&(-1)^n(z_{12}^{i_2}-z_{12}^{i_1})e_{12} + \nonumber \\ 
&&(-1)^n(z_{12}^{i_2}-z_{12}^{i_1})(-2y_{12}^{j_m}+z_{12}^i)e_{13},
  \ \mbox{where} \ n\geq 3; \nonumber
\end{eqnarray}

\vspace{-0.8cm}

\begin{eqnarray}
Z_i[Y_{j_1},Z_{i_1},\dots ,Z_{i_{n-1}},Y_{j_2},\dots ,Y_{j_m}]&=&(-1)^n(z_{12}^{i_1}-y_{12}^{j_1})e_{12} + \nonumber \\ 
&&(-1)^n(z_{12}^{i_1}-y_{12}^{j_1})(-2y_{12}^{j_m}+z_{12}^i)e_{13},
  \ \mbox{where} \ n\geq 2; \nonumber
\end{eqnarray}

\vspace{-0.8cm}

\begin{eqnarray}
[Y_{j_1},Z_{i_1},\dots ,Z_{i_{n-1}},Y_{j_2},\dots ,Y_{j_{m-1}}][Y_{p_1},Z_{p_2}]&=&(-1)^{n-1}(z_{12}^{i_1}-y_{12}^{j_1})(z_{12}^{p_2}-y_{12}^{p_1})e_{13}, \nonumber \\ 
&&
  \ \mbox{where} \ n\geq 2. \nonumber
\end{eqnarray}
\end{lemma}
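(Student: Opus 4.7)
The strategy is to reduce the entire lemma to a short computation inside the three-dimensional subspace $V = \mathrm{span}\{e_{12}, e_{23}, e_{13}\}$ of $UT_3$. Because the diagonals of the sgeneric matrices $Y_j$ and $Z_l$ are constant (namely $(1,0,1)$ and $(1,0,-1)$), the commutator $[A, X]$ of any strictly upper-triangular matrix $A$ with $X \in \{Y_j, Z_l\}$ automatically lies in $V$. Hence, once a single seed commutator is computed, every further bracket preserves $V$, and the whole calculation is controlled by how $Y_j$ and $Z_l$ act on $V$ from the left and from the right.

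First I would compute by direct $3 \times 3$ matrix multiplication the two seed commutators
\[
[Z_{i_1}, Z_{i_2}] = (z_{12}^{i_2} - z_{12}^{i_1})(e_{12} - e_{23}),
\qquad
[Y_{j_1}, Z_{i_1}] = (z_{12}^{i_1} - y_{12}^{j_1})(e_{12} + e_{23}) + \beta\, e_{13},
\]
together with the nine-entry left- and right-multiplication tables of $Y_j$ and $Z_l$ on $\{e_{12}, e_{23}, e_{13}\}$. From those tables one reads off the key recursions
\begin{align*}
[\alpha(e_{12} + e_{23}) + \beta e_{13},\, Y_j] &= -\alpha(e_{12} - e_{23}), \\
[\alpha(e_{12} - e_{23}) + \beta e_{13},\, Y_j] &= -\alpha(e_{12} + e_{23}) + 2\alpha y_{12}^j\, e_{13}, \\
[\alpha(e_{12} \pm e_{23}) + \beta e_{13},\, Z_l] &= -\alpha(e_{12} \pm e_{23}) + \gamma\, e_{13},
\end{align*}
for suitable $\gamma$ depending on $\alpha, \beta, z_{12}^l$. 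The crucial feature is that bracketing with $Y_j$ destroys the $e_{13}$-summand; this is why only the final $Y_{j_m}$ appears in the $e_{13}$-coefficient in part (b), and why that coefficient vanishes entirely in part (a).

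Given these recursions, each of the five pure-commutator identities in parts (a) and (b) falls to a two-stage induction: starting from the correct seed, induct on the number of $Z$-brackets $n$ (each contributes a factor $-1$), and then on the number of $Y$-brackets $m$ appended afterwards (each toggles between the $(e_{12} - e_{23})$- and $(e_{12} + e_{23})$-forms). The $Z_i[\cdots]$ formulas follow by left-multiplying the already-computed inner commutator by $Z_i$, using $Z_i e_{12} = e_{12}$, $Z_i e_{23} = z_{12}^i e_{13}$, $Z_i e_{13} = e_{13}$. The two product formulas $[\cdots][Y_{p_1}, Z_{p_2}]$ follow from
\[
(e_{12} \pm e_{23})(e_{12} + e_{23}) = e_{13},\qquad e_{13}(e_{12} + e_{23}) = 0,\qquad (e_{12} \pm e_{23})\, e_{13} = 0,
\]
which force the product of any $\alpha(e_{12} \pm e_{23}) + \beta e_{13}$ with $\gamma(e_{12} + e_{23}) + \delta e_{13}$ to collapse to $\alpha\gamma\, e_{13}$, matching the stated $(-1)^{n}(z_{12}^{i_1} - y_{12}^{j_1})(z_{12}^{p_2} - y_{12}^{p_1})e_{13}$ once the inductive signs are substituted.

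The hard part is not conceptual but bookkeeping: tracking two independent parities — a sign flip per $Z$-bracket and a form toggle per $Y$-bracket — while correctly separating out the $y_{12}^{j_m}$ contribution to $e_{13}$ in the odd-$m$ case. The split of the lemma into parts (a) and (b) mirrors exactly the even/odd $Y$-count, so organizing the induction separately on the $Z$-count and on the $Y$-count should make all twelve identities emerge uniformly from the same machinery.
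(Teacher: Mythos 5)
Your proposal is correct: the seed commutators, the three recursion rules on $\mathrm{span}\{e_{12},e_{23},e_{13}\}$ (in particular $[e_{13},Y_j]=0$, which explains why only $y_{12}^{j_m}$ survives in the $e_{13}$-coefficient), the left-multiplication rules for $Z_i$, and the product identities all check out, and together they yield every formula in the lemma by the two-parameter induction you describe. The paper explicitly leaves this verification to the reader, and your argument is exactly the intended direct computation, just organized more systematically.
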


\begin{proof}
We leave the proof to the reader.
\end{proof}

\subsubsection{Case $m$ even and $n_1>1$}

Let $M=(m_1,\ldots , m_k)$,  $N=(n_1,\ldots ,n_s)$, $m= m_1+ \ldots + m_k \geq 2$  and $ n=n_1+\ldots +n_s \geq 2.$
In this subsection, we consider the case where $m$ is even and $n_1 >1$.

\begin{proposition} \label{meven}
If $m$ is even and $n_1>1$, then $\{f+Id\cap B_{MN}:f\in S_3\}$ is a basis for the vector space $B_{MN}(Id)$. In particular, $Id\cap B_{MN}=I\cap B_{MN}$.
\end{proposition}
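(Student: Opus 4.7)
The plan follows exactly the strategy of Propositions \ref{propositionmain1}, \ref{propositionmain2}, \ref{propositionmain3} and \ref{baseB1N(Id)}. Since $I \subseteq Id$, Proposition \ref{geradoresBMN} already gives that $\{f+Id\cap B_{MN}:f\in S_3\}$ spans $B_{MN}(Id)$, so the real content is the linear independence modulo $Id$.

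First I would pick an arbitrary linear combination
\[
F = \sum \alpha_{i_1} f^{(i_1)} + \sum \beta_{i_1} g^{(i_1)} + \sum \alpha_{i,i_1} f^{(i,i_1)} + \sum \beta_{i,i_1} g^{(i,i_1)} + \sum \gamma_{j_1,p_1} h^{(j_1,p_1)}
\]
lying in $Id$, evaluate on the sgeneric matrices $Y_1,\dots,Y_k,Z_1,\dots,Z_s$, and use Lemma \ref{matrizgenericanova} to get the matrix equation $F(Y_1,\dots,Y_k,Z_1,\dots,Z_s)=0$. Since $m$ is even, Lemma \ref{lemadasigualdades}(a) supplies the structural information I need: the generators $f^{(i_1)},g^{(i_1)}$ produce matrices proportional to $e_{12}-e_{23}$ only; the generators $f^{(i,i_1)},g^{(i,i_1)}$ produce an $e_{12}$-part and an $e_{13}$-part but no $e_{23}$-part; and each $h^{(j_1,p_1)}$ contributes only to $e_{13}$.

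I would then fix an order on the commuting variables in which every $z_{12}^\ell$ is larger than every $y_{12}^j$ (so that \emph{$z$'s beat $y$'s} in the leading monomial) and work entry by entry. On entry $(2,3)$ only the $f^{(i_1)}$'s and $g^{(i_1)}$'s survive; the leading monomial of $f^{(i_1)}_{23}$ carries the factor $z_{12}^{i_1}$ while that of $g^{(i_1)}_{23}$ carries $y_{12}^{i_1}$, so a triangular induction on the maximum leading monomial, exactly as in Proposition \ref{baseB1N(Id)}, forces every $\alpha_{i_1}=0$ and then every $\beta_{i_1}=0$. With those killed, entry $(1,2)$ receives contributions only from the $f^{(i,i_1)}$'s and $g^{(i,i_1)}$'s, whose leading monomials are again separated by a $z_{12}^{i_1}$ vs.\ $y_{12}^{i_1}$ factor, giving $\alpha_{i,i_1}=\beta_{i,i_1}=0$ by the same mechanism. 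Finally entry $(1,3)$ reduces to a combination of the $h^{(j_1,p_1)}$'s, whose leading monomials carry the distinguishing product $y_{12}^{j_1}y_{12}^{p_1}$ (coming from the factor $(z_{12}^{i_1}-y_{12}^{j_1})(z_{12}^{p_2}-y_{12}^{p_1})$ in Lemma \ref{lemadasigualdades}(a)), and the same induction yields every $\gamma_{j_1,p_1}=0$.

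I expect the main obstacle to be the combinatorial bookkeeping of leading monomials in the presence of repeated variables coming from the multidegrees $(m_1,\dots,m_k),(n_1,\dots,n_s)$ and of the mixed tails $x_{i_2},\dots,x_{i_{t-1}}$; one has to split into the cases where the tail has $\geq 2$ skew entries (so the ``commutator with $n\geq 2$'' branch of Lemma \ref{lemadasigualdades}(a) applies) or only one, and in the latter case the extra $z_{i_1}$ in $f^{(i,i_1)}$ already provides the second skew slot needed for the $Z_i[\,\cdot\,]$ formula. The standard-form conditions in $S_2$ (hence in $S_3$), together with $\varphi_{MN}$ preserving the chosen order, guarantee that distinct $S_3$-polynomials produce distinct leading monomials on the matrix entry that first distinguishes them, so the triangular induction runs without further ado and no new phenomenon appears beyond those already handled in the earlier subsections.
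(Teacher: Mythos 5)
Your overall strategy is the paper's: spanning from Proposition \ref{geradoresBMN}, independence via the sgeneric matrices, Lemma \ref{matrizgenericanova} and Lemma \ref{lemadasigualdades}(a), an order in which every $z_{12}^{\ell}$ exceeds every $y_{12}^{j}$, and a triangular induction read off entry by entry. The first step is fine. The gap is in your second step. By Lemma \ref{lemadasigualdades}(a) the $(1,2)$-entry of $Z_i[Z_{i_1},Z_1,\dots ,Y_{j_m}]$ is $(-1)^n z_{12}^{i_1}$ and that of $Z_i[Y_{j_1},Z_1,\dots ,Y_{j_m}]$ is $(-1)^n y_{12}^{j_1}$; neither depends on $i$, because every sgeneric $Z_i$ has $(1,1)$-entry equal to $1$. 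So after the $\alpha_{i_1},\beta_{i_1}$ are gone, the $(1,2)$-entry of the relation only yields $\sum_i \alpha_{i,i_1}=0$ and $\sum_i \beta_{i,j_1}=0$ for each fixed $i_1$, $j_1$. This does not force the individual coefficients to vanish, and several values of $i$ genuinely occur for the same head variable whenever $s\geq 2$ (e.g.\ $z_1[z_2,\dots]$ and $z_2[z_2,\dots]$ both lie in $S_3$ when $n_2\geq 2$). Your claim that the leading monomials of the $f^{(i,i_1)}$ and $g^{(i,i_1)}$ on the $(1,2)$-entry are ``separated by a $z_{12}^{i_1}$ vs.\ $y_{12}^{i_1}$ factor'' separates the two families from each other, but not the different $i$'s within each family.

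The paper repairs exactly this point by interchanging your last two steps and splitting on $i$. For $i>1$ one works in the $(1,3)$-entry, where $f^{(i,i_1)}$ and $g^{(i,j_1)}$ contribute $\pm z_{12}^{i}z_{12}^{i_1}$ and $\pm z_{12}^{i}y_{12}^{j_1}$: these monomials do remember $i$, and under the order ``$z$ beats $y$'' they dominate the $h$-contributions $y_{12}^{j_1}y_{12}^{p_1}$, so the triangular induction gives $\alpha_{i,i_1}=\beta_{i,j_1}=0$ for all $i>1$. For $i=1$ the sgeneric $Z_1$ is the constant matrix $e_{11}-e_{33}$ (it has no $(1,2)$-variable), so $f^{(1,i_1)}$ and $g^{(1,j_1)}$ have zero $(1,3)$-entry; only now does the $(1,2)$-entry, which at this stage contains nothing but the $i=1$ terms, give $\alpha_{1,i_1}=\beta_{1,j_1}=0$. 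The final passage to the $\gamma_{j_1,p_1}$ on the $(1,3)$-entry is then as you describe. So your outline needs the $i>1$ versus $i=1$ case distinction and the use of the $(1,3)$-entry before the $(1,2)$-entry; as written, the middle step fails.
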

\begin{proof}
Since $I\subset Id$, we have, by Proposition \ref{geradoresBMN},
\[ B_{MN}(Id)=\mbox{span}\{f+Id\cap B_{MN}:f\in S_3\}.\]

Consider some order $>$ on $L$ such that
\[ z_{12}^{i+1} > z_{12}^{i} > y_{12}^{i+1} > y_{12}^{i}\]
for all $i\geq 1$. 
Let $Z_l$ and $Y_l$ be the sgeneric matrices, where $l\geq 1$.  By Lemma \ref{lemadasigualdades} we have 
\begin{align*}
& [Z_{i_1},Z_1,\dots ,Y_{j_m}]=(-1)^{n+1}z_{12}^{i_1}(e_{12}-e_{23}), \\
& [Y_{j_1},Z_1,\dots ,Y_{j_m}]=(-1)^{n+1}y_{12}^{j_1}(e_{12}-e_{23}), \\
& Z_i[Z_{i_1},Z_1,\dots ,Y_{j_m}]=(-1)^nz_{12}^{i_1}e_{12}-(-1)^nz_{12}^iz_{12}^{i_1}e_{13}, \\
& Z_i[Y_{j_1},Z_1,\dots ,Y_{j_m}]=(-1)^ny_{12}^{j_1}e_{12}-(-1)^nz_{12}^iy_{12}^{j_1}e_{13},\\
& [Y_{j_1},Z_1,\dots ,Y_{j_{m-1}}][Y_{p_1},Z_1]=(-1)^ny_{12}^{j_1}y_{12}^{p_1}e_{13}.
\end{align*}

Let $f^{(i_1)},g^{(j_1)},f^{(i,i_1)}, g^{(i,j_1)},h^{(j_1,p_1)}$ as in (\ref{polinomios})
and suppose
\[ \sum \alpha_{i_1}f^{(i_1)}+\sum \beta_{j_1}g^{(j_1)}+\sum \alpha_{i,i_1}f^{(i,i_1)}+\sum \beta_{i,j_1}g^{(i,j_1)}+\sum \gamma_{j_1,p_1}h^{(j_1,p_1)} \in Id, \]
where $\alpha_{i_1},\beta_{j_1}, \alpha_{i,i_1},\beta_{i,j_1},\gamma_{j_1,p_1} \in \mathbb{F}$.
Now we use the same arguments as in the previous propositions. 
In short, by the following table 

\begin{center}
\begin{tabular}{|c|c|c|c|}
\hline
Entry & Information &Monomial & Its coefficient \\
\hline
$(2,3)$& $f_{23}^{(i,i_1)}=g_{23}^{(i,j_1)}=h_{23}^{(j_1,p_1)}=0$ & $m(f_{23}^{(i_1)})$ & $\pm \alpha_{i_1}$\\
\hline
$(2,3)$& $f_{23}^{(i,i_1)}=g_{23}^{(i,j_1)}=h_{23}^{(j_1,p_1)}=0$ & $m(g_{23}^{(j_1)})$ & $ \pm \beta_{j_1} $\\
\hline
$(1,3)$& $i>1$ & $m(f_{13}^{(i,i_1)}) $ & $\pm \alpha_{i,i_1} $\\
\hline
$(1,3)$& $i>1$ & $m(g_{13}^{(i,j_1)}) $ & $\pm \beta_{i,j_1} $\\
\hline
$(1,2)$&    $i=1$   & $m(f_{12}^{(1,i_1)}) $ & $\pm \alpha_{1,i_1} $\\
\hline
$(1,2)$&    $i=1$   & $m(g_{12}^{(1,j_1)}) $ & $\pm \beta_{1,j_1} $\\
\hline
$(1,3)$&       & $m(h_{13}^{(j_1,p_1)})$ & $\pm \gamma_{j_1,p_1}$\\
\hline
\end{tabular}
\end{center}
where
\begin{center}
\begin{tabular}{ll}
\vspace{0.2cm}

$m(f_{23}^{(i_1)})= z_{12}^{i_1}$ , & \ \ \ \ \ $ m(g_{23}^{(j_1)})=y_{12}^{j_1}$,\\
\vspace{0.2cm}

$m(f_{13}^{(i,i_1)})= z_{12}^iz_{12}^{i_1}$, & \ \ \ \ \ $m(g_{13}^{(i,j_1)})=z_{12}^iy_{12}^{j_1}$,\\
\vspace{0.2cm}

$m(f_{12}^{(1,i_1)})=z_{12}^{i_1} $, & \ \ \ \ \ $m(g_{12}^{(1,j_1)})=y_{12}^{j_1}$,\\
$m(h_{13}^{(j_1,p_1)})=y_{12}^{j_1}y_{12}^{p_1}$, & 
\end{tabular}
\end{center}
we have $\alpha_{i_1}=0$, $\beta_{j_1}=0$, $\alpha_{i,i_1}=0$, $\beta_{i,j_1}=0$, $\alpha_{1,i_1}=0$, $\beta_{1,j_1}=0$, $\gamma_{j_1,p_1}=0$, respectively.
\end{proof}

\subsubsection{Case $m$ even and $n_1=1$}

Let $M=(m_1,\ldots , m_k)$,  $N=(n_1,\ldots ,n_s)$, $m= m_1+ \ldots + m_k \geq 2$  and $ n=n_1+\ldots +n_s \geq 2.$
In this subsection, we consider the case where $m$ is even and $n_1 =1$.

\begin{proposition}
If $m$ is even and $n_1=1$, then $\{f+Id\cap B_{MN}:f\in S_3\}$ is a basis for the vector space $B_{MN}(Id)$. In particular, $Id\cap B_{MN}=I\cap B_{MN}$.
\begin{proof}
Since $I\subset Id$, we have, by Proposition \ref{geradoresBMN},
\[ B_{MN}(Id)=\mbox{span}\{f+Id\cap B_{MN}:f\in S_3\}.\]

Consider some order $>$ on $L$ such that
\[ y_{12}^{i+1} > y_{12}^{i} >      z_{12}^{i+1} > z_{12}^{i} \]
for all $i\geq 1$. Let $Z_l$ and $Y_l$ be the sgeneric matrices, where $l\geq 1$. By Lemma \ref{lemadasigualdades} we have
\begin{align*}
& [Z_{i_1},Z_1,\dots , Y_{j_m}]=(-1)^{n+1}z_{12}^{i_1}(e_{12}-e_{23}), \\
& [Y_{j_1},Z_1,\dots ,Y_{j_m}]=(-1)^{n+1}y_{12}^{j_1}(e_{12}-e_{23}), \\
& Z_i[Z_{i_1},Z_1,\dots ,Y_{j_m}]=(-1)^nz_{12}^{i_1}e_{12}-(-1)^nz_{12}^iz_{12}^{i_1}e_{13}, \\
& Z_1[Z_{i_1},Z_2,\dots ,Y_{j_m}]=(-1)^{n-1}(z_{12}^2-z_{12}^{i_1})e_{12}, \\
& Z_i[Y_{j_1},Z_1,\dots ,Y_{j_m}]=(-1)^ny_{12}^{j_1}e_{12}-(-1)^nz_{12}^iy_{12}^{j_1}e_{13}, \\
& Z_1[Y_{j_1},Z_2,\dots ,Y_{j_m}]=(-1)^{n-1}(z_{12}^2-y_{12}^{j_1})e_{12}, \\
& [Y_{j_1},Z_2,\dots ,Y_{j_{m-1}}][Y_{p_1},Z_1]=(-1)^{n-1}(z_{12}^2-y_{12}^{j_1})y_{12}^{p_1}e_{13}.
\end{align*}

Let $f^{(i_1)},g^{(j_1)},f^{(i,i_1)},g^{(i,j_1)}
,h^{(j_1,p_1)}$ as in (\ref{polinomios}) and suppose
\[ \sum \alpha_{i_1}f^{(i_1)}+\sum \beta_{j_1}g^{(j_1)}+\sum \alpha_{i,i_1}f^{(i,i_1)}+\sum \beta_{i,j_1}g^{(i,j_1)}+\sum \gamma_{j_1,p_1}h^{(j_1,p_1)} \in Id, \]
where $\alpha_{i_1},\beta_{j_1}, \alpha_{i,i_1},\beta_{i,j_1},\gamma_{j_1,p_1} \in \mathbb{F}$.
Now we use the same arguments as in the previous propositions. 
In short, by the following table 

\begin{center}
\begin{tabular}{|c|c|c|c|}
\hline
Entry & Information &Monomial & Its coefficient \\
\hline
$(2,3)$& $f_{23}^{(i,i_1)}=g_{23}^{(i,j_1)}=h_{23}^{(j_1,p_1)}=0$ & $m(g_{23}^{(j_1)})$ & $\pm \beta_{j_1}$\\
\hline
$(2,3)$& $f_{23}^{(i,i_1)}=g_{23}^{(i,j_1)}=h_{23}^{(j_1,p_1)}=0$ & $m(f_{23}^{(i_1)})$ & $ \pm \alpha_{i_1} $\\
\hline
$(1,3)$&  & $m(h_{13}^{(j_1,p_1)}) $ & $\pm \gamma_{j_1,p_1} $\\
\hline
$(1,3)$& $i>1$ & $m(g_{13}^{(i,j_1)}) $ & $\pm \beta_{i,j_1} $\\
\hline
$(1,3)$& $i>1$ & $m(f_{13}^{(i,i_1)})$ & $\pm \alpha_{i,i_1}$\\
\hline
$(1,2)$& $i=1$ & $m(g_{12}^{(1,j_1)}) $ & $\pm \beta_{1,j_1} $\\
\hline
$(1,2)$& $i=1$ & $m(f_{12}^{(1,i_1)}) $ & $\pm \alpha_{1,i_1} $\\
\hline
\end{tabular}
\end{center}
where
\begin{center}
\begin{tabular}{ll}
\vspace{0.2cm}

$ m(g_{23}^{(j_1)})=y_{12}^{j_1}$ , & \ \ \ \ \ $m(f_{23}^{(i_1)})= z_{12}^{i_1}$,\\
\vspace{0.2cm}

$m(h_{13}^{(j_1,p_1)})=y_{12}^{j_1}y_{12}^{p_1}$, & \ \ \ \ \ $m(g_{13}^{(i,j_1)})=z_{12}^iy_{12}^{j_1}$,\\
\vspace{0.2cm}

$m(f_{13}^{(i,i_1)})= z_{12}^iz_{12}^{i_1}$, & \ \ \ \ \ $m(g_{12}^{(1,j_1)})=y_{12}^{j_1}$,\\

$m(f_{12}^{(1,i_1)})=z_{12}^{i_1} $, &
\end{tabular}
\end{center}
we have $\beta_{j_1}=0$, $\alpha_{i_1}=0$, $\gamma_{j_1,p_1}=0$ , $\beta_{i,j_1}=0$, $\alpha_{i,i_1}=0$, 
$\beta_{1,j_1}=0$, $\alpha_{1,i_1}=0$, respectively.
\end{proof}
\end{proposition}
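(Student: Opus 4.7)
The plan is to follow essentially the same strategy as in Proposition~\ref{meven}, modifying it to account for the degenerate role of the variable $z_1$ when $n_1 = 1$. Since spanning is already furnished by Proposition~\ref{geradoresBMN}, only linear independence modulo $Id$ must be verified.

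First I would fix an order on $L$ with $y_{12}^{i+1} > y_{12}^i > z_{12}^{i+1} > z_{12}^i$ for all $i$ (note this reverses the relative priority of $y$'s and $z$'s compared to the previous subsection, because now the $(1,3)$-entry will be dominated by $y_{12}$-factors rather than $z_{12}$-factors). Then I would evaluate each of the five $S_3$-standard families on the sgeneric matrices using the even-$m$ formulas of Lemma~\ref{lemadasigualdades}. The crucial structural fact is that $Z_1$ is the degenerate sgeneric matrix with $z_{12}^1 = 0$, so whenever $z_1$ appears as the outer variable of $f^{(i,i_1)} = z_i[z_{i_1},\ldots]$ or $g^{(i,j_1)} = z_i[y_{j_1},\ldots]$ (i.e.\ $i = 1$), the resulting contribution to the $(1,3)$-entry collapses, and the polynomial becomes visible only in the $(1,2)$-entry. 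This forces the analysis to split into the subcases $i = 1$ and $i > 1$.

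Next, suppose a linear combination
\[ \sum \alpha_{i_1} f^{(i_1)} + \sum \beta_{j_1} g^{(j_1)} + \sum \alpha_{i,i_1} f^{(i,i_1)} + \sum \beta_{i,j_1} g^{(i,j_1)} + \sum \gamma_{j_1,p_1} h^{(j_1,p_1)} \]
lies in $Id$. By Lemma~\ref{matrizgenericanova} the sgeneric evaluation vanishes as a matrix in $UT_3$, so I would extract coefficients one entry at a time. From the $(2,3)$-entry, where $f^{(i,i_1)}_{23}$, $g^{(i,j_1)}_{23}$ and $h^{(j_1,p_1)}_{23}$ all vanish, a leading-monomial induction on $z_{12}^{i_1}$ and $y_{12}^{j_1}$ kills all the $\alpha_{i_1}$ and $\beta_{j_1}$. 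Then in the $(1,3)$-entry the new order makes the monomial $y_{12}^{j_1} y_{12}^{p_1}$ dominate any $z_{12}$-factor, so $\gamma_{j_1,p_1}$ is extracted first, followed by $\alpha_{i,i_1}$ and $\beta_{i,j_1}$ for $i > 1$ (whose leading monomials contain $z_{12}^i$ with $i \geq 2$). Finally, the remaining $i = 1$ coefficients $\alpha_{1,i_1}$ and $\beta_{1,j_1}$ are read off from the $(1,2)$-entry.

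The main obstacle will be the bookkeeping: checking that the peeling order $(2,3)\to(1,3)\to(1,2)$ really leaves no residue interfering with later steps, and in particular that the $i = 1$ subfamilies are cleanly isolated in the $(1,2)$-entry. The essential new input over Proposition~\ref{meven} is the effect of $Z_1$ as a \emph{slot-killer}, i.e.\ the vanishing $z_{12}^1 = 0$ in the sgeneric model, which is exactly what reshuffles the entry in which the $i=1$ coefficients become readable and explains why a separate order of extraction is needed compared with the $n_1 > 1$ case.
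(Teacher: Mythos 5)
Your proposal is correct and follows essentially the same route as the paper: the reversed order $y_{12}^{i+1}>y_{12}^{i}>z_{12}^{i+1}>z_{12}^{i}$, the sgeneric evaluations with $Z_1$ having vanishing $(1,2)$-entry so that the $i=1$ families collapse out of the $(1,3)$-entry, and the peeling sequence $(2,3)\to(1,3)\to(1,2)$. The only caveat is the order of extraction within the $(1,3)$-entry: since $y_{12}$-variables dominate $z_{12}$-variables, the $\beta_{i,j_1}$ (whose leading monomial $z_{12}^{i}y_{12}^{j_1}$ contains a $y_{12}$-factor) must be removed before the $\alpha_{i,i_1}$ (leading monomial $z_{12}^{i}z_{12}^{i_1}$), which is the bookkeeping point you flagged.
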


\subsubsection{Case $m$ odd and $n_1>1$}

Let $M=(m_1,\ldots , m_k)$,  $N=(n_1,\ldots ,n_s)$, $m= m_1+ \ldots + m_k \geq 2$  and $ n=n_1+\ldots +n_s \geq 2.$
In this subsection, we consider the case where $m$ is odd and $n_1 >1$.

\begin{proposition}\label{polinomioslimimpar}
If $m$ is odd and $n_1>1$, then $\{f+Id\cap B_{MN}:f\in S_3\}$ is a basis for the vector space $B_{MN}(Id)$. In particular, $Id\cap B_{MN}=I\cap B_{MN}$.
\begin{proof}
Let $Z_l$ and $Y_l$ be the sgeneric matrices, where $l\geq 1$. By Lemma \ref{lemadasigualdades} we have
\begin{align*}
& [Z_{i_1},Z_1,\dots ,Y_{j_m}]=(-1)^nz_{12}^{i_1}(e_{12}+e_{23})-2(-1)^nz_{12}^{i_1}y_{12}^{j_m}e_{13}, \\
& [Y_{j_1},Z_1,\dots ,Y_{j_m}]=(-1)^ny_{12}^{j_1}(e_{12}+e_{23})-2(-1)^ny_{12}^{j_1}y_{12}^{j_m}e_{13}, \\
& Z_i[Z_{i_1},Z_1,\dots ,Y_{j_m}]=(-1)^{n+1}z_{12}^{i_1}e_{12}+(-1)^{n+1}z_{12}^{i_1}(-2y_{12}^{j_m}+z_{12}^i)e_{13}, \\
& Z_i[Y_{j_1},Z_1,\dots ,Y_{j_m}]=(-1)^{n+1}y_{12}^{j_1}e_{12}+(-1)^{n+1}y_{12}^{j_1}(-2y_{12}^{j_m}+z_{12}^i)e_{13}, \\
& [Y_{j_1},Z_1,\dots ,Y_{j_{m-1}}][Y_{p_1},Z_1]=(-1)^{n+1}y_{12}^{j_1}y_{12}^{p_1}e_{13}.
\end{align*}
Now we use the same order $>$, table and leading monomials in Proposition \ref{meven}. 
\end{proof}
\end{proposition}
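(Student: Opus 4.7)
The plan is to follow the template established in Proposition \ref{meven} almost verbatim, replacing the Lemma \ref{lemadasigualdades}(a) computations with those of Lemma \ref{lemadasigualdades}(b). By Proposition \ref{geradoresBMN} and $I\subseteq Id$, the set $\{f+Id\cap B_{MN}:f\in S_3\}$ already spans $B_{MN}(Id)$, so the only remaining task is linear independence modulo $Id$. I would keep the same order on $L$, namely $z_{12}^{i+1}>z_{12}^{i}>y_{12}^{i+1}>y_{12}^{i}$ for all $i\geq 1$, since the crucial feature (all $z_{12}$'s dominate all $y_{12}$'s) is what will control the leading monomials in the perturbed entries.

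Using the sgeneric matrices and the displayed evaluations already listed in the statement of the proposition (these are exactly Lemma \ref{lemadasigualdades}(b) specialized to $n_1>1$, so that $Z_1$ sits in the second slot of every commutator), I would now assume
\[\sum\alpha_{i_1}f^{(i_1)}+\sum\beta_{j_1}g^{(j_1)}+\sum\alpha_{i,i_1}f^{(i,i_1)}+\sum\beta_{i,j_1}g^{(i,j_1)}+\sum\gamma_{j_1,p_1}h^{(j_1,p_1)}\in Id,\]
apply Lemma \ref{matrizgenericanova}, and extract information entry by entry. The $(2,3)$-entry collects only $f^{(i_1)}$ and $g^{(j_1)}$ contributions (since $f^{(i,i_1)},g^{(i,j_1)},h^{(j_1,p_1)}$ all have $(2,3)$-entry equal to zero in their sgeneric evaluations), with leading monomials $m(f_{23}^{(i_1)})=z_{12}^{i_1}$ and $m(g_{23}^{(j_1)})=y_{12}^{j_1}$; by downward induction on the chosen order these force every $\alpha_{i_1}$ and $\beta_{j_1}$ to vanish. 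What remains is killed by reading $(1,3)$ and $(1,2)$: the $h^{(j_1,p_1)}$ terms separate at $(1,3)$ via $m(h_{13}^{(j_1,p_1)})=y_{12}^{j_1}y_{12}^{p_1}$, the $f^{(i,i_1)},g^{(i,j_1)}$ terms with $i>1$ separate at $(1,3)$ via $m(f_{13}^{(i,i_1)})=z_{12}^{i}z_{12}^{i_1}$ and $m(g_{13}^{(i,j_1)})=z_{12}^{i}y_{12}^{j_1}$, and finally the $i=1$ subcase is handled at $(1,2)$ via $m(f_{12}^{(1,i_1)})=z_{12}^{i_1}$ and $m(g_{12}^{(1,j_1)})=y_{12}^{j_1}$. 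This is exactly the table used in Proposition \ref{meven}, up to signs.

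The only real content that needs checking is that the odd-case perturbations do not spoil the leading monomials. In Lemma \ref{lemadasigualdades}(b), the $(1,3)$-entries of $Z_i[Z_{i_1},Z_1,\ldots,Y_{j_m}]$ and $Z_i[Y_{j_1},Z_1,\ldots,Y_{j_m}]$ acquire an extra summand proportional to $-2y_{12}^{j_m}$ and there is an overall $(e_{12}+e_{23})$ in place of $(e_{12}-e_{23})$, but under the chosen order $z_{12}^{i}$ strictly dominates any $y_{12}^{j_m}$, so the leading monomial of the $(1,3)$-entry is still $z_{12}^{i}z_{12}^{i_1}$ (resp.\ $z_{12}^{i}y_{12}^{j_1}$), and the sign swap in $(e_{12}+e_{23})$ only flips an overall $\pm$ in the $(2,3)$-coefficient. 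Thus the same table and induction from Proposition \ref{meven} apply. The main (minor) obstacle is this bookkeeping verification, which is immediate from the displayed formulas. Linear independence follows, and the equality $Id\cap B_{MN}=I\cap B_{MN}$ is then automatic.
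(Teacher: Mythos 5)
Your proposal is correct and follows essentially the same route as the paper: the paper's proof likewise substitutes the Lemma \ref{lemadasigualdades}(b) evaluations and then invokes verbatim the order, table and leading monomials of Proposition \ref{meven}. Your explicit check that the extra $-2y_{12}^{j_m}$ perturbations in the $(1,3)$-entries cannot overtake the leading monomials (because every $z_{12}$ dominates every $y_{12}$) is exactly the bookkeeping the paper leaves implicit.
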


\subsubsection{Case $m$ odd where $n_1=1$ and $m_k>1$}

Let $M=(m_1,\ldots , m_k)$,  $N=(n_1,\ldots ,n_s)$, $m= m_1+ \ldots + m_k \geq 2$  and $ n=n_1+\ldots +n_s \geq 2.$
In this subsection, we consider the case $m$ odd where $n_1=1$ and $m_k>1$

\begin{proposition}
If $m$ is odd, $n_1=1$ and $m_k>1$, then $\{f+Id\cap B_{MN}:f\in S_3\}$ is a basis for the vector space $B_{MN}(Id)$. In particular, $Id\cap B_{MN}=I\cap B_{MN}$.
\begin{proof}
Since $I\subset Id$, we have, by Proposition \ref{geradoresBMN},
\[ B_{MN}(Id)=\mbox{span}\{f+Id\cap B_{MN}:f\in S_3\}.\]

Consider some order $>$ on $L$ such that
\[ z_{12}^{i+1} > z_{12}^{i} > y_{12}^{i+1} > y_{12}^{i} \]
for all $i\geq 1$. Let $Z_l$ and $Y_l$ be the sgeneric matrices, where $l\geq 1$. By Lemma \ref{lemadasigualdades} we have
\begin{align*}
& [Z_{i_1},Z_1,\dots ,Y_k]=(-1)^nz_{12}^{i_1}(e_{12}+e_{23})-2(-1)^nz_{12}^{i_1}y_{12}^ke_{13}, \\
& [Y_{j_1},Z_1,\dots ,Y_k]=(-1)^ny_{12}^{j_1}(e_{12}+e_{23})-2(-1)^ny_{12}^{j_1}y_{12}^ke_{13}, \\
& Z_i[Z_{i_1},Z_1,\dots ,Y_k]=(-1)^{n+1}z_{12}^{i_1}e_{12}+(-1)^{n+1}z_{12}^{i_1}(-2y_{12}^k+z_{12}^i)e_{13}, \\
& Z_1[Z_{i_1},Z_2,\dots ,Y_k]=(-1)^n(z_{12}^2-z_{12}^{i_1})e_{12}-2(-1)^n(z_{12}^2-z_{12}^{i_1})y_{12}^ke_{13}, \\
& Z_i[Y_{j_1},Z_1,\dots ,Y_k]=(-1)^{n+1}y_{12}^{j_1}e_{12}+(-1)^{n+1}y_{12}^{j_1}(-2y_{12}^k+z_{12}^i)e_{13}, \\
& Z_1[Y_{j_1},Z_2,\dots ,Y_k]=(-1)^n(z_{12}^2-y_{12}^{j_1})e_{12}-2(-1)^n(z_{12}^2-y_{12}^{j_1})y_{12}^ke_{13}, \\
& [Y_{j_1},Z_2,\dots ,Y_{j_{m-1}}][Y_{p_1},Z_1]=(-1)^n(z_{12}^2-y_{12}^{j_1})y_{12}^{p_1}e_{13}. 
\end{align*}

Let $f^{(i_1)},g^{(j_1)},f^{(i,i_1)},g^{(i,j_1)},h^{(j_1,p_1)}$ as in (\ref{polinomios}), and suppose
\[ \sum \alpha_{i_1}f^{(i_1)}+\sum \beta_{j_1}g^{(j_1)}+\sum \alpha_{i,i_1}f^{(i,i_1)}+\sum \beta_{i,j_1}g^{(i,j_1)}+\sum \gamma_{j_1,p_1}h^{(j_1,p_1)} \in Id, \]
where $\alpha_{i_1},\beta_{j_1}, \alpha_{i,i_1},\beta_{i,j_1},\gamma_{j_1,p_1} \in \mathbb{F}$. 
By the following table 

\begin{center}
\begin{tabular}{|c|c|c|c|}
\hline
Entry & Information &Monomial & Its coefficient \\
\hline
$(2,3)$& $f_{23}^{(i,i_1)}=g_{23}^{(i,j_1)}=h_{23}^{(j_1,p_1)}=0$ & $m(f_{23}^{(i_1)})$ & $\pm \alpha_{i_1}$\\
\hline
$(2,3)$& $f_{23}^{(i,i_1)}=g_{23}^{(i,j_1)}=h_{23}^{(j_1,p_1)}=0$ & $m(g_{23}^{(j_1)})$ & $ \pm \beta_{j_1} $\\
\hline
$(1,3)$& $i>1$ & $m(f_{13}^{(i,i_1)}) $ & $\pm \alpha_{i,i_1} $\\
\hline
$(1,2)$& $i=1$ & $m(f_{12}^{(1,i_1)}) $ & $\pm \alpha_{1,i_1} $\\
\hline
$(1,3)$& $i>2$ & $m(g_{13}^{(i,j_1)})$ & $\pm \beta_{i,j_1}$\\
\hline
$(1,3)$& $j_1<k$ & $w$ & $\pm \gamma_{j_1,p_1} $\\
\hline
\end{tabular}
\end{center}
where
\begin{center}
\begin{tabular}{ll}
\vspace{0.2cm}

$m(f_{23}^{(i_1)})=z_{12}^{i_1}$, & \ \ \ \ \ $m(g_{23}^{(j_1)})=y_{12}^{j_1}$, \\
\vspace{0.2cm}

$m(f_{13}^{(i,i_1)})=z_{12}^{i_1}z_{12}^i$, & \ \ \ \ \ $m(f_{12}^{(1,i_1)})=z_{12}^{i_1}$, \\
$m(g_{13}^{(i,j_1)})=y_{12}^{j_1}z_{12}^i$, & \ \ \ \ \ $w=y_{12}^{j_1}y_{12}^{p_1}$,
\end{tabular}
\end{center}
we have $\alpha_{i_1}=0$, $\beta_{j_1}=0$, $\alpha_{i,i_1}=0$ for $i>1$, $\alpha_{1,i_1}=0$, $\beta_{i,j_1}=0$ for $i>2$ and $\gamma_{j_1,p_1}=0$ for $j_1<k$, respectively.
Thus, now we have
\[v= \sum \beta_{2,j_1}g^{(2,j_1)}+\sum \beta_{1,j_1}g^{(1,j_1)}+\sum \gamma_{k,p_1}h^{(k,p_1)}\in Id.\]
The coefficient of $y_{12}^{j_1}$ in $v_{12}$ is 
\[ \beta_{1,j_1}+\beta_{2,j_1}=0\]
for all $j_1=1,\dots ,k$; and the coefficient of $y_{12}^ly_{12}^k$ in $v_{13}$  is 
\[ -2\beta_{1,l}-2\beta_{2,l}+\gamma_{k,l}=0\]
for all $l=1,\dots ,k$. Therefore, $\gamma_{k,l}=0$ for all $l=1,\dots ,k$.

For the remaining coefficients, by the following table 

\begin{center}
\begin{tabular}{|c|c|c|c|}
\hline
Entry & Information &Monomial & Its coefficient \\
\hline
$(1,3)$ & $j_1<k$ & $m(g_{13}^{(2,j_1)})$ & $\pm \beta_{2,j_1}$ \\
\hline
$(1,3)$ & $j_1<k$ & $u$ & $\pm 2\beta_{1,j_1}$ \\
\hline 
$(1,2)$ & $j_1=k$ & $m(g_{12}^{(1,k)})$ & $\pm \beta_{1,k}$ \\
\hline
$(1,2)$ & $j_1=k$ & $m(g_{12}^{(2,k)})$ & $\pm \beta_{2,k}$ \\
\hline
\end{tabular}
\end{center}
where
\begin{center}
\begin{tabular}{ll}
\vspace{0.2cm}

$m(g_{13}^{(2,j_1)})=y_{12}^{j_1}z_{12}^2$, & \ \ \ \ \ $u=y_{12}^{j_1}y_{12}^k$, \\
$m(g_{12}^{(1,k)})=z_{12}^2$, & \ \ \ \ \ $m(g_{12}^{(2,k)})=y_{12}^k$,
\end{tabular}
\end{center}
we have $\beta_{2,j_1}=0$, $\beta_{1,j_1}=0$, $\beta_{1,k}=0$ and $\beta_{2,k}=0$, respectively.
\end{proof}
\end{proposition}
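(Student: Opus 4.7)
The plan is to follow the template already established in Propositions \ref{meven} and \ref{polinomioslimimpar}: reduce to a linear independence statement for $S_3$-standard polynomials modulo $Id$, then evaluate on sgeneric matrices and read off the coefficients entry by entry using leading-monomial arguments. By Proposition \ref{geradoresBMN} together with $I \subseteq Id$, the images $\{f + Id \cap B_{MN} : f \in S_3\}$ already span $B_{MN}(Id)$, so it is enough to show they are linearly independent. By Lemma \ref{matrizgenericanova}, linear independence over $Id$ amounts to linear independence of the corresponding matrix evaluations in $UT_3(\mathbb{F}[y_{ij}^k, z_{ij}^k])$.

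I would fix the order $z_{12}^{i+1} > z_{12}^{i} > y_{12}^{i+1} > y_{12}^{i}$ on $L$ and compute all evaluations using Lemma \ref{lemadasigualdades}(b), since $m$ is odd. The key subtlety of the present case is that the sgeneric matrix $Z_1$ has vanishing $(1,2)$-entry. Consequently, whenever $Z_1$ appears as the outermost left factor in a product of the form $z_i[\cdots]$ or the outermost right factor in $[\cdots][\cdots, z_1]$, the $(1,2)$-contribution collapses and the polynomial loses information in precisely the monomials that separate it from its neighbors. This forces the polynomials $f^{(1,i_1)}$, $g^{(1,j_1)}$, $g^{(2,j_1)}$, and $h^{(k,p_1)}$ to be treated separately from the generic ones.

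Starting from an assumed dependency
\[\sum \alpha_{i_1} f^{(i_1)} + \sum \beta_{j_1} g^{(j_1)} + \sum \alpha_{i,i_1} f^{(i,i_1)} + \sum \beta_{i,j_1} g^{(i,j_1)} + \sum \gamma_{j_1, p_1} h^{(j_1, p_1)} \in Id,\]
I would first extract the $(2,3)$-entry, where only $f^{(i_1)}$ and $g^{(j_1)}$ contribute, and apply the leading-monomial induction on $z_{12}^{i_1}$ and $y_{12}^{j_1}$ to kill every $\alpha_{i_1}$ and $\beta_{j_1}$. A corresponding sweep through the $(1,3)$-entry, using the leading monomials $z_{12}^{i_1} z_{12}^{i}$, $y_{12}^{j_1} z_{12}^{i}$, and $y_{12}^{j_1} y_{12}^{p_1}$, eliminates the coefficients $\alpha_{i,i_1}$ for $i \geq 2$, the coefficients $\beta_{i,j_1}$ for $i \geq 3$, and the coefficients $\gamma_{j_1, p_1}$ for $j_1 < k$. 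A short $(1,2)$-pass using $z_{12}^{i_1}$ from $f^{(1,i_1)}$ disposes of $\alpha_{1,i_1}$.

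The main obstacle is the residual dependency
\[v = \sum_{j_1} \beta_{1,j_1} g^{(1,j_1)} + \sum_{j_1} \beta_{2,j_1} g^{(2,j_1)} + \sum_{p_1} \gamma_{k,p_1} h^{(k,p_1)} \in Id,\]
because the three families have overlapping leading monomials in both the $(1,2)$- and $(1,3)$-entries. The plan is to first read the coefficient of $y_{12}^{j_1}$ in $v_{12}$, obtaining the linear relation $\beta_{1,j_1} + \beta_{2,j_1} = 0$ for each $j_1$; then to read the coefficient of $y_{12}^{l} y_{12}^{k}$ in $v_{13}$, which after substituting the previous relation forces $\gamma_{k,l} = 0$ for every $l$. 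Once the $\gamma$'s are gone, a final leading-monomial sweep on the $(1,3)$-entry (using $y_{12}^{j_1} z_{12}^{2}$ from $g^{(2,j_1)}$ for $j_1 < k$) separates $\beta_{2,j_1}$ from $\beta_{1,j_1}$, and the $(1,2)$-entry in the case $j_1 = k$ handles the two remaining coefficients $\beta_{1,k}$ and $\beta_{2,k}$. I expect the bookkeeping of this residual step, rather than any new conceptual ingredient, to be the principal technical difficulty.
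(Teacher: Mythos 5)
Your proposal is correct and follows essentially the same route as the paper: the same spanning reduction via Proposition \ref{geradoresBMN}, the same order on $L$ and sgeneric-matrix evaluations from Lemma \ref{lemadasigualdades}, the same entry-by-entry leading-monomial sweep, and the same handling of the residual dependency among $g^{(1,j_1)}$, $g^{(2,j_1)}$, $h^{(k,p_1)}$ via the relations $\beta_{1,j_1}+\beta_{2,j_1}=0$ and $-2\beta_{1,l}-2\beta_{2,l}+\gamma_{k,l}=0$. No substantive differences to report.
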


\subsubsection{Case $m$ odd where $n_1=m_k=1$ and char($\mathbb{F}$)$>3$}

Let $M=(m_1,\ldots , m_k)$,  $N=(n_1,\ldots ,n_s)$, $m= m_1+ \ldots + m_k \geq 2$  and $ n=n_1+\ldots +n_s \geq 2.$
In this subsection, we consider the case $m$ odd where $n_1=m_k=1$ and char($\mathbb{F}$)$>3$.

\begin{proposition}
If $m$ is odd, $n_1=m_k=1$ and $\mbox{char}(\mathbb{F})>3$, then $Id\cap B_{MN}=I\cap B_{MN}$.
\begin{proof}
By Observation \ref{observacaoordemdasvariaveis} we have $m_1=\ldots =m_{k-1}=m_k=1$. 

If 
$n_s=1$ then $n_1=n_2=\ldots =n_s=1$ and we can use the same proof of \cite[Lemma 6.4]{vinkossca}.

Suppose $n_s >1$. By a change of variables $z_1 \longleftrightarrow z_s$ we can suppose $n_1 >1$. Note that
\[n_s \leq n_2 \leq n_3 \leq \ldots n_{s-1} \leq n_1.\]
But the Proposition \ref{polinomioslimimpar} is also true  in this case, the proof is the same. 
\end{proof}
\end{proposition}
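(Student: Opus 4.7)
The plan is to reduce the statement to cases that have already been handled in the paper, using the monotonicity convention of Observation \ref{observacaoordemdasvariaveis} together with a harmless relabeling of the skew-symmetric variables.

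First, since $m_k=1$ and the convention (\ref{aaa}) gives $1\leq m_1\leq \cdots \leq m_k$, I immediately get $m_1=\cdots=m_k=1$, so $B_{MN}$ is multilinear in the $y$-variables. I would then split according to $n_s$. If $n_s=1$, then the monotonicity $1\leq n_1\leq\cdots\leq n_s$ forces $n_1=\cdots=n_s=1$, the entire space becomes fully multilinear, $B_{MN}=\Gamma_{ks}$, and the equality $Id\cap\Gamma_{ks}=I\cap\Gamma_{ks}$ follows by repeating verbatim the argument of \cite[Lemma 6.4]{vinkossca}. This is where the hypothesis $\mathrm{char}(\mathbb{F})>3$ is actually used, because that proof needs the invertibility of certain small integer coefficients.

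If instead $n_s>1$, I would apply the involution-preserving automorphism of $\mathbb{F}\langle Y\cup Z\rangle$ that swaps the skew-symmetric generators $z_1$ and $z_s$ and fixes all the other free generators. Because both $Id$ and $I$ are T$(*)$-ideals, they are preserved by this automorphism, so the desired equality is equivalent to its analogue on the relabeled subspace. In the new labeling one has $n_1>1$ (the old $n_s$), and $m$ is still odd, placing the situation squarely within the hypotheses of Proposition \ref{polinomioslimimpar}, which I would then invoke to conclude.

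The one subtlety, and the step I expect to warrant the most care, is that after the swap the degree sequence no longer satisfies the monotonicity assumption of Observation \ref{observacaoordemdasvariaveis}; one has instead $n_s\leq n_2\leq\cdots\leq n_{s-1}\leq n_1$. I would therefore verify that the proof of Proposition \ref{polinomioslimimpar} never secretly exploits that monotonicity. Inspecting that proof, the spanning step relies on Proposition \ref{geradoresBMN} and on the relations of Lemmas \ref{relacoes1}--\ref{relacoes5}, none of which depend on the indexing of the $z$-variables; the linear-independence step evaluates on the sgeneric matrices $Z_l,Y_l$ and reads off coefficients of explicit leading monomials of the $(1,2)$, $(1,3)$ and $(2,3)$ entries, via the formulas of Lemma \ref{lemadasigualdades}. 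Both steps are manifestly index-agnostic, so the argument transports unchanged and the proposition is established.
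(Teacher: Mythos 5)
Your proposal is correct and follows essentially the same route as the paper: reduce to $m_1=\cdots=m_k=1$, handle $n_s=1$ by the fully multilinear argument of \cite[Lemma 6.4]{vinkossca}, and for $n_s>1$ swap $z_1\leftrightarrow z_s$ and reuse the proof of Proposition \ref{polinomioslimimpar}, checking that it never uses the monotonicity of the $z$-degrees. Your explicit verification that both the spanning and independence steps are index-agnostic is exactly the (tacit) content of the paper's remark that ``the proof is the same.''
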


\subsubsection{Case $m$ odd where $n_1=m_k=1$ and char($\mathbb{F}$)$=3$}

Let $M=(m_1,\ldots , m_k)$,  $N=(n_1,\ldots ,n_s)$, $m= m_1+ \ldots + m_k \geq 2$  and $ n=n_1+\ldots +n_s \geq 2.$
In this subsection, we consider the case $m$ odd where $n_1=m_k=1$ and char($\mathbb{F}$)$=3$.

We remember that $S_3$ is the set of all polynomials defined in (\ref{polinomios}).

\begin{definition}
Denote by $S_4$ the set
 \[S_4=S_3-\{g^{(1,k)}\}.\]
 We say that the polynomials in $S_4$ are $S_4$-standard.
\end{definition}

\begin{proposition} \label{lemaultimo}
The vector space $B_{MN}(I)$ is spanned by the set of all elements $f+I\cap B_{MN}$ where $f\in B_{MN}$ is $S_4$-standard.
\begin{proof}
We work modulo $I$. By Proposition \ref{geradoresBMN}, it is sufficient to prove that $g^{(1,k)}$ is a linear combination of $S_4$-standard polynomials.
In fact, we will prove that

\begin{equation}\label{igualdadedosgs}
g^{(1,k)}=g^{(1,k-1)}-g^{(2,k-1)}+g^{(2,k)}.
\end{equation} 

By Lemma \ref{relacoes5} - (b,c), we have
\[z_{l_n} \ldots z_{l_3} z_1[y_k,z_{l_2},y_1,\dots ,y_{k-1}]= (-1)^n z_1[y_k,z_{l_2},z_{l_3},\ldots , z_{l_n}, y_1,\dots ,y_{k-1}] .\]
Thus it is sufficient 
to prove (\ref{igualdadedosgs}) when $n=2$ that is 
\begin{align*}
z_1[y_k,z_2,y_1,\dots ,y_{k-1}]= & z_1[y_{k-1},z_2,y_1,\dots ,y_k] -z_2[y_{k-1},z_1,y_1,\dots ,y_k]\\   & +z_2[y_k,z_1,y_1,\dots ,y_{k-1}].
\end{align*}

\noindent {\bf {Claim:}} If $i\neq j$ and $a\neq b$, then:
\begin{align*}
z_i[y_1,z_j,y_a,\dots ,y_b]= & z_i[y_b,z_j,y_1,\dots ,y_a]-z_i[y_b,y_1,z_j,\dots ,y_a] \\
& +2z_i[y_1,z_j,\dots ][y_a,y_b]. 
\end{align*}

In fact, by Lemma \ref{relacoes2}-c), equality $[[a,b],c,d]-[[a,b],d,c]=[[a,b],[c,d]]$, Jacobi identity and Proposition \ref{ident}-b), we obtain
\begin{align*}
z_i[y_1,z_j,y_a,\dots ,y_b] & =z_i[y_1,z_j,\dots ,y_a,y_b] \\
 & =z_i[y_1,z_j,\dots ,y_b,y_a]+z_i[[y_1,z_j,\dots ],[y_a,y_b]] \\
 & =z_i[y_1,z_j,y_b,\dots ,y_a]+2z_i[y_1,z_j,\dots ][y_a,y_b] \\
 & =z_i[y_b,z_j,y_1,\dots ,y_a]-z_i[y_b,y_1,z_j,\dots ,y_a] \\
 & \ \ \ +2z_i[y_1,z_j,\dots ][y_a,y_b],
\end{align*} 
and the claim is proved. 

Now, by the Jacobi identity, we have
\[ g^{(1,k)}=z_1[y_1,z_2,y_k,\dots ,y_{k-1}]-z_1[y_1,y_k,z_2,\dots ,y_{k-1}]\]
and applying Lemma \ref{relacoes4} and Jacobi identity in the second summand, 
\begin{align*}
g^{(1,k)} & = z_1[y_1,z_2,y_k,\dots ,y_{k-1}]-z_2[y_1,y_k,z_1,\dots ,y_{k-1}] \\
 & \ \ \ +[y_1,y_k][z_2,z_1,\dots ,y_{k-1}] \\
 & = z_1[y_1,z_2,y_k,\dots ,y_{k-1}]+g^{(2,k)} \\
 & \ \ \ -z_2[y_1,z_1,y_k,\dots ,y_{k-1}]+[y_1,y_k][z_2,z_1,\dots ,y_{k-1}].
\end{align*}
By applying the Claim in the summands $z_1[y_1,z_2,y_k,\dots ,y_{k-1}]$ and $z_2[y_1,z_1,y_k,\dots ,y_{k-1}]$, we have
\[g^{(1,k)}  = g^{(1,k-1)}-g^{(2,k-1)}+g^{(2,k)} +f\] 
where
\begin{align}  
 f=    & \ \ -z_1[y_{k-1},y_1,z_2,\dots ,y_k]+z_2[y_{k-1},y_1,z_1,\dots ,y_k]  \nonumber \\ 
          & \ \ -2z_2[y_1,z_1,\dots ][y_k,y_{k-1}]+2z_1[y_1,z_2,\dots ][y_k,y_{k-1}] \nonumber \\
          & \ \ +[y_1,y_k][z_2,z_1,\dots ,y_{k-1}]. \nonumber
\end{align}
We shall prove that $f=0$. By Lemma \ref{relacoes4} and Lemma \ref{relacoes1}-iii),
\begin{align*}
-z_1[y_{k-1},y_1,z_2,\dots ,y_k]+z_2[y_{k-1},y_1,z_1,\dots ,y_k] & = [y_{k-1},y_1][z_2,z_1,\dots ,y_k] \\
 & = - [y_{k-1},y_1,y_k][z_2,z_1,\dots ].
\end{align*}
By Lemma \ref{relacoes1}-iii),
\[ [y_1,y_k][z_2,z_1,\dots ,y_{k-1}]=-[y_1,y_k,y_{k-1}][z_2,z_1,\dots], \]
and then, applying the Jacobi identity, Lemma \ref{relacoes1}-iii), Lemma \ref{relacoes2}-b) and Proposition \ref{ident}-ii) we have:
\begin{align*}
 & - [y_{k-1},y_1,y_k][z_2,z_1,\dots ]-[y_1,y_k,y_{k-1}][z_2,z_1,\dots] \\
 & = [y_k,y_{k-1},y_1][z_2,z_1,\dots ]\\
 & = -[y_k,y_{k-1}][z_2,z_1,y_1,\dots ] \\
 & = [z_2,z_1,y_1,\dots ][y_k,y_{k-1}] \\
 & = [y_1,z_1,z_2,\dots ][y_k,y_{k-1}]-[y_1,z_2,z_1,\dots ][y_k,y_{k-1}].
\end{align*}

By Proposition \ref{ident}-v) and Lemma \ref{relacoes2}-b),
\begin{align*}
& -2z_2[y_1,z_1,\dots ][y_k,y_{k-1}]+2z_1[y_1,z_2,\dots ][y_k,y_{k-1}] = \\
& 2[y_1,z_1,z_2,\dots ][y_k,y_{k-1}]-2[y_1,z_2,z_1\dots ][y_k,y_{k-1}].
\end{align*}

Therefore, since $\mbox{char}(\mathbb{F})=3$, we have
\begin{align*}
f & = [y_1,z_1,z_2,\dots ][y_k,y_{k-1}]-[y_1,z_2,z_1,\dots ][y_k,y_{k-1}] \\
 & \ \ +2[y_1,z_1,z_2,\dots ][y_k,y_{k-1}]-2[y_1,z_2,z_1\dots ][y_k,y_{k-1}] \\
 & = 3[y_1,z_1,z_2,\dots ][y_k,y_{k-1}]-3[y_1,z_2,z_1\dots ][y_k,y_{k-1}] \\
 & = 0.
\end{align*}
We finished the proof.
\end{proof}
\end{proposition}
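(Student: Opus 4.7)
The plan is to show that, modulo $I$, the single removed standard polynomial $g^{(1,k)}$ is an $\mathbb{F}$-linear combination of $S_4$-standard polynomials; combined with Proposition \ref{geradoresBMN}, this will yield the desired $S_4$-spanning statement. Concretely, I aim to establish the identity
\[ g^{(1,k)} \equiv g^{(1,k-1)} - g^{(2,k-1)} + g^{(2,k)} \pmod{I}. \]

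First I would reduce to the case where the $z$-part consists of just $z_1, z_2$. In our setting $n_1 = 1$ but the higher $n_i$ may exceed $1$, so a generic $g^{(i,k)}$ carries additional $z$-letters to the left of the long commutator. Parts (b) and (c) of Lemma \ref{relacoes5} let us absorb these extra $z$-letters into the commutator at the cost of a sign, making the verification a purely two-$z$ identity. Next I would establish a rearrangement claim: for $i\neq j$ and $a\neq b$,
\[ z_i[y_1,z_j,y_a,\ldots,y_b] \equiv z_i[y_b,z_j,y_1,\ldots,y_a] - z_i[y_b,y_1,z_j,\ldots,y_a] + 2z_i[y_1,z_j,\ldots][y_a,y_b] \pmod{I}, \]
obtained by permuting interior entries with Lemma \ref{relacoes2}(c), applying the Jacobi identity at the last two positions (to swap $y_a$ and $y_b$), and converting the resulting $[\cdot,\cdot][y_a,y_b]$ via Proposition \ref{ident}(ii) (this is where the factor of $2$ appears). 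Finally, applying Jacobi to $g^{(1,k)}$ at the letter $y_k$ and then Lemma \ref{relacoes4} to one of the two resulting pieces produces $g^{(2,k)}$ together with a product-of-commutators residue; applying the Claim to the remaining two summands $z_1[y_1,z_2,\ldots]$ and $z_2[y_1,z_1,\ldots]$ then produces $g^{(1,k-1)} - g^{(2,k-1)}$ plus a controlled error term $f$.

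The main obstacle is the final cancellation of $f$. I expect $f$ to decompose into five explicit summands, each of which can be pushed into the common normal form $[y_1,z_\bullet,z_\bullet,\ldots][y_k,y_{k-1}]$ using Lemma \ref{relacoes1}(iii), Lemma \ref{relacoes2}(b), the Jacobi identity, and Proposition \ref{ident}(ii) and (v). Once everything is in this shape, $f$ collapses to $3[y_1,z_1,z_2,\ldots][y_k,y_{k-1}] - 3[y_1,z_2,z_1,\ldots][y_k,y_{k-1}]$, which vanishes because $\mathrm{char}(\mathbb{F})=3$. The delicate part is not any single algebraic move but the bookkeeping: each auxiliary lemma repositions only one letter at a time, signs must be tracked consistently, and the summands have to be steered through the reductions in the right order so that the numerical coefficients genuinely combine into a multiple of $3$, rather than leaving behind a residue that is not itself in $I$.
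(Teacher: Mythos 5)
Your proposal is correct and follows essentially the same route as the paper: the same target identity $g^{(1,k)}=g^{(1,k-1)}-g^{(2,k-1)}+g^{(2,k)}$ modulo $I$, the same reduction to two $z$-letters via Lemma \ref{relacoes5}(b,c), the same rearrangement Claim with the factor of $2$ from Proposition \ref{ident}(ii), and the same final collapse of the residue $f$ to $3(\cdots)-3(\cdots)=0$ in characteristic $3$.
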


\begin{proposition}
If $m$ is odd, $n_1=m_k=1$ and $\mbox{char}(\mathbb{F})=3$, then $\{f+Id\cap B_{MN}:f\in S_4\}$ is a basis for the vector space $B_{MN}(Id)$. In particular, $Id\cap B_{MN}=I\cap B_{MN}$.
\begin{proof}
Since $I\subset Id$, we have, by Proposition \ref{lemaultimo},
\[ B_{MN}(Id)=\mbox{span}\{f+Id\cap B_{MN}:f\in S_4\}.\]

Consider some order $>$ on $L$ such that
\[ z_{12}^{i+1} > z_{12}^{i} > y_{12}^{i+1} > y_{12}^{i} \]
for all $i\geq 1$. Let $Z_l$ and $Y_l$ be the sgeneric matrices, where $l\geq 1$. By Lemma \ref{lemadasigualdades} we have
\begin{align*}
& [Z_{i_1},Z_1,\dots ,Y_k]=(-1)^nz_{12}^{i_1}(e_{12}+e_{23})-2(-1)^nz_{12}^{i_1}y_{12}^ke_{13}, \\
& [Y_{j_1},Z_1,\dots ,Y_{j_m}]=(-1)^ny_{12}^{j_1}(e_{12}+e_{23})-2(-1)^ny_{12}^{j_1}y_{12}^{j_m}e_{13}, \\
& Z_i[Z_{i_1},Z_1,\dots ,Y_k]=(-1)^{n+1}z_{12}^{i_1}e_{12}+(-1)^{n+1}z_{12}^{i_1}(-2y_{12}^k+z_{12}^i)e_{13}, \\
& Z_1[Z_{i_1},Z_2,\dots ,Y_k]=(-1)^n(z_{12}^2-z_{12}^{i_1})e_{12}-2(-1)^n(z_{12}^2-z_{12}^{i_1})y_{12}^ke_{13}, \\
& Z_i[Y_{j_1},Z_1,\dots ,Y_k]=(-1)^{n+1}y_{12}^{j_1}e_{12}+(-1)^{n+1}y_{12}^{j_1}(-2y_{12}^k+z_{12}^i)e_{13}, \\
& Z_i[Y_k,Z_1,\dots ,Y_{k-1}]=(-1)^{n+1}y_{12}^ke_{12}+(-1)^{n+1}y_{12}^k(-2y_{12}^{k-1}+z_{12}^i)e_{13}, \\
& Z_1[Y_{j_1},Z_2,\dots ,Y_k]=(-1)^n(z_{12}^2-y_{12}^{j_1})e_{12}-2(-1)^n(z_{12}^2-y_{12}^{j_1})y_{12}^ke_{13}, \\
& [Y_{j_1},Z_2,\dots ,Y_{j_{m-1}}][Y_{p_1},Z_1]=(-1)^n(z_{12}^2-y_{12}^{j_1})y_{12}^{p_1}e_{13}.
\end{align*}

Let $f^{(i_1)},g^{(j_1)},f^{(i,i_1)},g^{(i,j_1)},h^{(j_1,p_1)}$ be $S_4$-standard polynomials, and suppose
\[ \sum \alpha_{i_1}f^{(i_1)}+\sum \beta_{j_1}g^{(j_1)}+\sum \alpha_{i,i_1}f^{(i,i_1)}+\sum \beta_{i,j_1}g^{(i,j_1)}+\sum \gamma_{j_1,p_1}h^{(j_1,p_1)} \in Id, \]
where $\alpha_{i_1},\beta_{j_1}, \alpha_{i,i_1},\beta_{i,j_1},\gamma_{j_1,p_1} \in \mathbb{F}$. Now we use the same arguments as in the previous propositions.
In short, by the following table 

\begin{center}
\begin{tabular}{|c|c|c|c|}
\hline
Entry & Information &Monomial & Its coefficient \\
\hline
$(2,3)$& $f_{23}^{(i,i_1)}=g_{23}^{(i,j_1)}=h_{23}^{(j_1,p_1)}=0$ & $m(f_{23}^{(i_1)})$ & $\pm \alpha_{i_1}$\\
\hline
$(2,3)$& $f_{23}^{(i,i_1)}=g_{23}^{(i,j_1)}=h_{23}^{(j_1,p_1)}=0$ & $m(g_{23}^{(j_1)})$ & $ \pm \beta_{j_1} $\\
\hline
$(1,3)$& $i>1$ & $m(f_{13}^{(i,i_1)}) $ & $\pm \alpha_{i,i_1} $\\
\hline
$(1,2)$& $i=1$ & $m(f_{12}^{(1,i_1)}) $ & $\pm \alpha_{1,i_1} $\\
\hline
$(1,3)$& $i>2$ & $m(g_{13}^{(i,j_1)})$ & $\pm \beta_{i,j_1}$\\
\hline
$(1,3)$& $j_1<k$ & $w$ & $\pm \gamma_{j_1,p_1} $\\
\hline
$(1,2)$& $i=2$ & $m(g_{12}^{(2,k)})$ & $\pm \beta_{2,k}$ \\
\hline
\end{tabular}
\end{center}
where
\begin{center}
\begin{tabular}{ll}
\vspace{0.2cm}

$m(f_{23}^{(i_1)})=z_{12}^{i_1}$, & \ \ \ \ \ $m(g_{23}^{(j_1)})=y_{12}^{j_1}$, \\
\vspace{0.2cm}

$m(f_{13}^{(i,i_1)})=z_{12}^{i_1}z_{12}^i$, & \ \ \ \ \ $m(f_{12}^{(1,i_1)})=z_{12}^{i_1}$, \\
\vspace{0.2cm}

$m(g_{13}^{(i,j_1)})=y_{12}^{j_1}z_{12}^i$, & \ \ \ \ \ $w=y_{12}^{j_1}y_{12}^{p_1}$, \\
$m(g_{12}^{(2,k)})=y_{12}^k$, &
\end{tabular}
\end{center}
we have $\alpha_{i_1}=0$, $\beta_{j_1}=0$, $\alpha_{i,i_1}=0$ for $i>1$, $\alpha_{1,i_1}=0$, $\beta_{i,j_1}=0$ for $i>2$ ,
 $\gamma_{j_1,p_1}=0$ for $j_1<k$ and $\beta_{2,k}=0$, respectively.

Thus, now we have
\[ \sum_{j_1=1}^{k-1} \beta_{2,j_1}g^{(2,j_1)}+\sum_{j_1=1}^{k-1} \beta_{1,j_1}g^{(1,j_1)}+\sum_{p_1=1}^{k-1} \gamma_{k,p_1}h^{(k,p_1)}\in Id.\]

By the monomial $y_{12}^{j_1}$ in the $(1,2)$-entry, we have
\[ \beta_{1,j_1}+\beta_{2,j_1}=0\]
for all $j_1=1,\dots ,k-1$, and by the monomial $y_{12}^ly_{12}^k$ in the $(1,3)$-entry  we have
\[ -2\beta_{1,l}-2\beta_{2,l}+\gamma_{k,l}=0\]
for all $l=1,\dots ,k-1$. Therefore, $\gamma_{k,l}=0$ for all $l=1,\dots ,k-1$.

For the remaining coefficients, by the following table 

\begin{center}
\begin{tabular}{|c|c|c|c|}
\hline
Entry & Information &Monomial & Its coefficient \\
\hline
$(1,3)$ &  & $m(g_{13}^{(2,j_1)})$ & $\pm \beta_{2,j_1}$ \\
\hline
$(1,3)$ &  & $u$ & $\pm 2\beta_{1,j_1}$ \\
\hline
\end{tabular}
\end{center}
where
\begin{center}
\begin{tabular}{ll}
$m(g_{13}^{(2,j_1)})=y_{12}^{j_1}z_{12}^2$ & \ \ \ \ \  $u=y_{12}^{j_1}y_{12}^k$, \\
\end{tabular}
\end{center}
we have $\beta_{2,j_1}=0$ and $\beta_{1,j_1}=0$, respectively.
\end{proof}
\end{proposition}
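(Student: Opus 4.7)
The plan is to mimic the strategy of the preceding propositions (in particular Proposition \ref{polinomioslimimpar} and its char $>3$ analogue), using the sgeneric matrices together with Lemma \ref{matrizgenericanova} to extract coefficients by reading off leading monomials entry-by-entry. What makes this case slightly subtler is the shift from $S_3$ to $S_4 = S_3 \setminus \{g^{(1,k)}\}$: Proposition \ref{lemaultimo} shows that in characteristic $3$ the polynomial $g^{(1,k)}$ is already a linear combination of other elements of $S_3$ modulo $I$, so it must be excluded from a basis, and the linear-independence argument is not allowed to use it.

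First I would invoke Proposition \ref{lemaultimo} together with $I \subseteq Id$ to obtain the spanning statement for $B_{MN}(Id)$. For linear independence, I would suppose
\[\sum \alpha_{i_1}f^{(i_1)} + \sum \beta_{j_1}g^{(j_1)} + \sum \alpha_{i,i_1}f^{(i,i_1)} + \sum \beta_{i,j_1}g^{(i,j_1)} + \sum \gamma_{j_1,p_1}h^{(j_1,p_1)} \in Id,\]
where all index ranges exclude $g^{(1,k)}$, and evaluate on the sgeneric matrices $Y_l,Z_l$. By Lemma \ref{matrizgenericanova} the result is zero, and Lemma \ref{lemadasigualdades} supplies explicit formulas for each term; crucially these formulas are characteristic-free, so the computation of each matrix entry is identical to the char $>3$ case. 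Fixing the order $z_{12}^{i+1}>z_{12}^i>y_{12}^{i+1}>y_{12}^i$ on $L$, I would peel off coefficients in the expected order: the $(2,3)$-entry isolates the $\alpha_{i_1}$'s and $\beta_{j_1}$'s via $m(f_{23}^{(i_1)})=z_{12}^{i_1}$ and $m(g_{23}^{(j_1)})=y_{12}^{j_1}$; the $(1,3)$-entry with $i>1$ kills the $\alpha_{i,i_1}$; the $(1,2)$-entry with $i=1$ kills the $\alpha_{1,i_1}$; the $(1,3)$-entry with $i>2$ kills the $\beta_{i,j_1}$; and the pure-$y$ monomial $y_{12}^{j_1}y_{12}^{p_1}$ with $j_1<k$ in the $(1,3)$-entry kills $\gamma_{j_1,p_1}$ in that range.

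The main obstacle will be the residual system in $\beta_{1,j_1}$, $\beta_{2,j_1}$ (for $j_1\leq k$), $\beta_{2,k}$, and $\gamma_{k,p_1}$, which in the char $>3$ proof was solved using $-2\beta_{1,l}-2\beta_{2,l}+\gamma_{k,l}=0$ together with $\beta_{1,l}+\beta_{2,l}=0$. In characteristic $3$ the coefficient $-2$ collapses to $1$, so these two equations become $\beta_{1,l}+\beta_{2,l}+\gamma_{k,l}=0$ and $\beta_{1,l}+\beta_{2,l}=0$, which still yields $\gamma_{k,l}=0$ — but only after $\beta_{2,k}$ has been separately accounted for, because $g^{(1,k)}$ is absent. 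The fix is to isolate $\beta_{2,k}$ first using the monomial $m(g_{12}^{(2,k)})=y_{12}^k$ in the $(1,2)$-entry (this step works precisely because $g^{(1,k)}$, which would have contributed the competing monomial $y_{12}^k$, has been excluded from $S_4$). After that the two linear relations above give $\gamma_{k,l}=0$ for $l<k$, and finally the leading monomials $m(g_{13}^{(2,j_1)})=y_{12}^{j_1}z_{12}^2$ and $y_{12}^{j_1}y_{12}^k$ in the $(1,3)$-entry give separate equations that kill $\beta_{2,j_1}$ and then $\beta_{1,j_1}$. The delicate point throughout is the bookkeeping: one has to verify that the exclusion of $g^{(1,k)}$ really does eliminate the single redundancy created by char $=3$, so that each successive leading monomial genuinely isolates one undetermined coefficient.
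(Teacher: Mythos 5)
Your proposal is correct and follows essentially the same route as the paper: spanning via Proposition \ref{lemaultimo}, then linear independence by evaluating on the sgeneric matrices with the same monomial order and peeling off coefficients entry-by-entry in the same sequence, including the key step of isolating $\beta_{2,k}$ through the monomial $y_{12}^k$ in the $(1,2)$-entry (possible precisely because $g^{(1,k)}$ is excluded from $S_4$) and then resolving the residual system in $\beta_{1,j_1},\beta_{2,j_1},\gamma_{k,l}$ from the relations $\beta_{1,l}+\beta_{2,l}=0$ and $-2\beta_{1,l}-2\beta_{2,l}+\gamma_{k,l}=0$. There is no substantive difference from the paper's proof.
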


\subsection{Conclusion}

Since $\mathbb{F}$ is an infinite field and $B_{MN}\cap Id=B_{MN}\cap I$ for all $M,N$, we have the first main result of 
this paper.

\begin{theorem}\label{teoremaprincipal1}
Let $\mathbb{F}$ be an infinite field with char$(\mathbb{F})> 2$. If $*$ is an involution of the first kind on 
$UT_3(\mathbb{F})$ then $Id(UT_3(\mathbb{F}),*)$ is the T$(*)$-ideal generated
by the polynomials of Proposition \ref{ident}.
\end{theorem}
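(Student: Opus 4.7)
My plan is to assemble the theorem from the pieces already established in the paper, since all the heavy lifting has been done in the preceding subsections. The statement to prove is that $Id = I$, where $I$ is the $T(\ast)$-ideal generated by the polynomials in Proposition \ref{ident}.

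First, by Corollary \ref{corolarioinvolucoestresportres}, any involution $\circledast$ of the first kind on $UT_3(\mathbb{F})$ is equivalent to the specific involution $\ast$ defined in \eqref{involucaotresportres}, and $Id(UT_3(\mathbb{F}),\circledast) = Id(UT_3(\mathbb{F}),\ast) = Id$. So it suffices to prove the theorem for $\ast$. The inclusion $I \subseteq Id$ is exactly Proposition \ref{ident}. The remaining content is the reverse inclusion $Id \subseteq I$.

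Next, I would invoke the standard reduction recalled in the paper: since $\mathbb{F}$ is infinite, both $Id$ and $I$ are generated as $T(\ast)$-ideals by their multihomogeneous $Y$-proper elements (\cite[Lemma 2.1]{drenskygiambruno} and \cite[Page 546]{vinkossca}). Hence it is enough to prove $Id \cap B_{MN} = I \cap B_{MN}$ for every pair of multidegrees $M=(m_1,\dots,m_k)$ and $N=(n_1,\dots,n_s)$, and by Observation \ref{observacaoordemdasvariaveis} we may further assume the ordering $1 \leq m_1 \leq \dots \leq m_k$ and $1 \leq n_1 \leq \dots \leq n_s$.

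Finally, I would check the equality $Id \cap B_{MN} = I \cap B_{MN}$ by splitting into the exhaustive list of cases handled in the preceding subsections: the case $N=(0)$ is Proposition \ref{propositionmain1}; the case $M=(0)$ is Proposition \ref{propositionmain2}; the case $M\neq (0),(1)$ and $N=(1)$ is Proposition \ref{propositionmain3}; the case $M=(1)$ and $N\neq (0)$ is covered by Proposition \ref{baseB1N(Id)} (for $n_s \geq 3$, which by Proposition \ref{proposicaocorpoinfinitopotenciasdep} is the only multidegree one needs to check when $n_s > 1$) together with the $n_s=1$ proposition; and the case $M,N \neq (0),(1)$ is covered by the four propositions in Subsections 3.5.1--3.5.5 split according to the parity of $m$, the value of $n_1$, whether $m_k > 1$, and the characteristic of $\mathbb{F}$. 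Since each of these propositions gives $Id \cap B_{MN} = I \cap B_{MN}$ in its range, the exhaustive case analysis completes the proof. There is no essential obstacle at this stage; the only bookkeeping point is verifying that the cases cover every allowed $(M,N)$ after the normalization from Observation \ref{observacaoordemdasvariaveis} and after the reduction to $p$-power multidegrees from Proposition \ref{proposicaocorpoinfinitopotenciasdep}, which restricts the $B_{1N}$ analysis to $n_s$ being either $1$ or a $p$-power at least $3$.
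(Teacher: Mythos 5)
Your proposal is correct and follows exactly the paper's argument: the theorem is obtained by combining the reduction to multihomogeneous $Y$-proper components with the exhaustive case analysis of the preceding propositions showing $Id\cap B_{MN}=I\cap B_{MN}$ for all $M,N$. Your additional bookkeeping remarks (on Observation \ref{observacaoordemdasvariaveis} and the $p$-power reduction of Proposition \ref{proposicaocorpoinfinitopotenciasdep}) are consistent with how those propositions are stated and used in the paper.
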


Note that this theorem is also true when char$(\mathbb{F})=0$. See \cite[Theorem 6.6]{vinkossca}.

\section{$\ast$-Central Polynomials for $UT_n(\mathbb{F})$}

Let $\mathbb{F}$ be a field (finite or infinite) of characteristic $\neq 2$. In this section we study the $\ast$-central polynomials for $UT_n(\mathbb{F})$, where $n\geq 3$ . 

Consider the involutions $*$ and $s$ in Section \ref{secaocomasduasinvolucoes}. If $\circ$ is an involution on $UT_n(\mathbb{F})$ then $\circ$ is equivalent either to $*$ or to $s$, see Section \ref{secaocomasduasinvolucoes}.
Thus 
\begin{equation} \label{igualdadepolinomioscentrais}
C(UT_n(\mathbb{F}),\circ)=C(UT_n(\mathbb{F}),*) \ \ \mbox{or} \ \  C(UT_n(\mathbb{F}),\circ)=C(UT_n(\mathbb{F}),s).
\end{equation}

\begin{theorem}
If $\circ$ is an involution on $UT_n(\mathbb{F})$ and $n\geq 3$ then 
\[C(UT_n(\mathbb{F}),\circ)=Id(UT_n(\mathbb{F}),\circ)+\mathbb{F}.\]
\end{theorem}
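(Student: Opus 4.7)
The inclusion $Id(UT_n(\mathbb{F}),\circ)+\mathbb{F}\subseteq C(UT_n(\mathbb{F}),\circ)$ is immediate, so the work is in the converse. By (\ref{igualdadepolinomioscentrais}) we may assume $\circ\in\{*,s\}$. A direct computation with the diagonal matrix $D$ shows that $(A^s)_{ii}=D_{ii}^2(A^*)_{ii}=(A^*)_{ii}=A_{n+1-i,n+1-i}$, so the two involutions act identically on diagonal entries, and consequently the diagonals of $UT_n^+$ and $UT_n^-$ have the same form under both involutions.

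The key tool is the algebra homomorphism $\pi_i\colon UT_n(\mathbb{F})\to\mathbb{F}$, $A\mapsto A_{ii}$. Given $f\in C(UT_n(\mathbb{F}),\circ)$, any $*$-substitution yields $f(a_1,\dots,b_s)=\lambda\,I_n\in Z(UT_n(\mathbb{F}))=\mathbb{F}\cdot I_n$. Applying $\pi_i$ and using that $\mathbb{F}$ is commutative,
\[
\lambda=\pi_i\bigl(f(a_1,\dots,b_s)\bigr)=f^{\mathrm{ab}}\bigl(\pi_i(a_1),\dots,\pi_i(b_s)\bigr),
\]
where $f^{\mathrm{ab}}\in\mathbb{F}[y_1,\dots,y_k,z_1,\dots,z_s]$ is obtained from $f$ by letting the variables commute. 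In particular the right-hand side does not depend on $i$. I would then argue that $f^{\mathrm{ab}}$ takes a single value $\mu\in\mathbb{F}$ on all of $\mathbb{F}^{k+s}$ by comparing $\pi_1$ and $\pi_2$: a short inspection of the diagonals shows that for $n\ge 4$ the maps $a\mapsto(\pi_1(a),\pi_2(a))$ and $b\mapsto(\pi_1(b),\pi_2(b))$ are both surjective onto $\mathbb{F}^2$ (on symmetric and skew-symmetric elements respectively), whereas for $n=3$ the first is still surjective while for skew $b$ one has $\pi_2(b)=0$ always (since $e_{22}$ is symmetric) and only $\pi_1(b)$ is free. In either case the identity $f^{\mathrm{ab}}(\pi_1(\bar a),\pi_1(\bar b))=f^{\mathrm{ab}}(\pi_2(\bar a),\pi_2(\bar b))$, together with these independences, forces $f^{\mathrm{ab}}(\alpha,\beta)=f^{\mathrm{ab}}(\alpha',\beta')$ for arbitrary tuples in $\mathbb{F}^{k+s}$, i.e.\ $f^{\mathrm{ab}}$ is constantly equal to some $\mu\in\mathbb{F}$.

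Setting $g=f-\mu$, we have $g\in C(UT_n(\mathbb{F}),\circ)$ and $g^{\mathrm{ab}}$ vanishes on all of $\mathbb{F}^{k+s}$. For any $*$-substitution $g(a_1,\dots,b_s)=\lambda'\,I_n$, and the same projection argument gives $\lambda'=g^{\mathrm{ab}}(\pi_1(a_1),\dots,\pi_1(b_s))=0$. Hence $g\in Id(UT_n(\mathbb{F}),\circ)$, and $f=\mu+g\in\mathbb{F}+Id(UT_n(\mathbb{F}),\circ)$. The only step requiring any care is the $n=3$ case of the constancy argument in the middle paragraph, where the forced zero in the middle diagonal of skew elements makes the comparison take the slightly asymmetric form $f^{\mathrm{ab}}(\pi_1(\bar a),\pi_1(\bar b))=f^{\mathrm{ab}}(\pi_2(\bar a),0)$; everything else follows formally from $\pi_i$ being a homomorphism and from $Z(UT_n(\mathbb{F}))=\mathbb{F}\cdot I_n$.
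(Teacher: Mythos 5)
Your proof is correct and follows essentially the same route as the paper: both arguments reduce to comparing the $(1,1)$- and $(2,2)$-entries of values of $f$, using that for $n\geq 3$ the $(2,2)$-entry of the relevant symmetric and skew-symmetric substitutions can be made $0$ while the $(1,1)$-entry is arbitrary, so that centrality forces the abelianization of $f$ to be constant. The paper merely packages this via the specific substitution $y_i\mapsto a_i(e_{11}+e_{nn})$, $z_j\mapsto b_j(e_{11}-e_{nn})$ rather than via the general diagonal homomorphisms $\pi_1,\pi_2$.
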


\begin{proof}
By (\ref{igualdadepolinomioscentrais}) we can suppose $\circ = *$ or $\circ = s$. In this case we have that
$e_{11}^{\circ}=e_{nn}.$  In particular, $A=e_{11}+e_{nn}$ and $B=e_{11}-e_{nn}$ are symmetric and skew-symmetric 
elements respectively.

Since 
\[C(UT_n(\mathbb{F}),\circ) \supseteq Id(UT_n(\mathbb{F}),\circ)+\mathbb{F}\]
we shall prove the inclusion $\subseteq$. Let $g(y_1,\ldots ,y_k,z_1,\ldots , z_s)\in C(UT_n(\mathbb{F}),\circ)$. 
Write 
\[g(y_1,\ldots ,y_k,z_1,\ldots , z_s)=f(y_1,\ldots ,y_k,z_1,\ldots , z_s)+\lambda\]
where $f(0,\ldots ,0,0,\ldots , 0)=0$ ($f$ without constant term) and $\lambda \in \mathbb{F}$. 

\

\noindent {\bf Claim 1:} $f(y_1,\ldots ,y_k,z_1,\ldots , z_s)$ is a polynomial identity for $\mathbb{F}$.

In fact, let $a_1,\ldots ,a_k,b_1,\ldots , b_s \in \mathbb{F}$. Write
\[f(a_1A,\ldots ,a_kA,b_1B,\ldots , b_sB)=\sum \alpha_{ij} e_{ij}.\]
Since $\alpha _{11}=f(a_1,\ldots ,a_k,b_1,\ldots , b_s)$, \ $\alpha_{22}=0$ and 
$f(y_1,\ldots ,y_k,z_1,\ldots , z_s) \in C(UT_n(\mathbb{F}),\circ)$ it follows that $\alpha_{11}=\alpha_{22}=0$ as desired.

\

 \noindent{\bf Claim 2:} $f(y_1,\ldots ,y_k,z_1,\ldots , z_s) \in Id(UT_n(\mathbb{F}),\circ)$.

Let   $A_1,\ldots ,A_k \in UT_n(\mathbb{F})^+$ and $B_1,\ldots ,B_s \in UT_n(\mathbb{F})^-$ where
\[A_l=\sum a_{ij}^le_{ij}  \ \ \mbox{and} \ \ B_l=\sum b_{ij}^le_{ij} .\]
Write
\[f(A_1,\ldots ,A_k,B_1,\ldots , B_s)=\sum \alpha_{ij} e_{ij}.\]
Since  
$f(y_1,\ldots ,y_k,z_1,\ldots , z_s) \in C(UT_n(\mathbb{F}),\circ)$ it follows that
\[f(A_1,\ldots ,A_k,B_1,\ldots , B_s)=\sum_{i=1}^n \alpha e_{ii},\]
where $\alpha = \alpha_{11}=\ldots =\alpha_{nn}$. Since 
$\alpha_{11}=f(a_{11}^1,\ldots ,a_{11}^k,b_{11}^1,\ldots , b_{11}^s)$, by Claim 1 we have $\alpha =0$ as desired.

By Claim 2 we have $g(y_1,\ldots ,y_k,z_1,\ldots , z_s)\in Id(UT_n(\mathbb{F}),\circ)+\mathbb{F}$. 
\end{proof}

\section*{Funding}
Dalton Couto Silva was supported by Ph.D. grant from Coordena\c{c}\~ao de Aperfei\c{c}oamento de Pessoal de N\'{i}vel Superior (CAPES). Dimas Jos\'e Gon\c{c}alves was partially supported by Funda\c{c}\~ao de Amparo \`a Pesquisa do Estado de S\~ao Paulo (FAPESP) grant No. 2018/23690-6.

\end{document}